\documentclass[a4paper,reqno,11pt]{amsart}
\usepackage{amsfonts,amsmath,amsthm,amssymb,stmaryrd,mathrsfs}
\newtheorem{theorem}{Theorem}[section]
\newtheorem{lemma}[theorem]{Lemma}
\newtheorem{Proposition}[theorem]{Proposition}
\newtheorem{Remark}[theorem]{Remark}

\usepackage{color}
\numberwithin{equation}{section}
\allowdisplaybreaks

\usepackage[left=1 in, right=1 in,top=1 in, bottom=1 in]{geometry}



\arraycolsep=1.5pt
\providecommand{\commu}[1]{\left[#1\right]}


\DeclareMathOperator{\diver}{div}

\newcommand{\na}{\nabla}

\newcommand{\la}{\lambda}
\newcommand{\de}{\delta}

\newcommand{\pa}{\partial}

\newcommand{\eps}{\epsilon}
\newcommand{\va}{\varepsilon}

\newcommand{\De}{\Delta}

\providecommand{\norm}[1]{\left\Vert#1\right\Vert}

\def\r3{\mathbb{R}^3}

\def\ls{\lesssim}
\def\gt{\gtrsim}
\def\le{\leqslant}
\def\ge{\geqslant}


\begin{document}
\title[Stability of Navier-Stokes-Poisson equations]{Stability of steady states of the Navier-Stokes-Poisson equations with non-flat doping profile}

\author{Zhong Tan}
\address{School of Mathematical Sciences\\
Xiamen University\\
Xiamen, Fujian 361005, China}
\email[Z. Tan]{ztan85@163.com}

\author{Yanjin Wang}
\address{School of Mathematical Sciences\\
Xiamen University\\
Xiamen, Fujian 361005, China}
\email[Y. J. Wang]{yanjin$\_$wang@xmu.edu.cn}

\author{Yong Wang}
\address{School of Mathematical Sciences\\
Xiamen University\\
Xiamen, Fujian 361005, China}
\email[Y. Wang]{wangyongxmu@163.com}

\keywords{Navier-Stokes-Poisson equations; Stability; Energy method; Time decay.}

\subjclass[2010]{35M10; 35Q60; 35Q35}

\thanks{Corresponding author: Yong Wang, wangyongxmu@163.com}
\thanks{Z. Tan was supported by the National Natural Science Foundation
of China (No. 11271305). Y. J. Wang was supported by the National Natural Science Foundation of China (No. 11201389), the
Natural Science Foundation of Fujian Province of China (No. 2012J05011), the Specialized Research Fund for
the Doctoral program of Higher Education (No. 20120121120023), and the Fundamental Research Funds for
the Central Universities (No. 2013121002).}

\begin{abstract}
We consider the stability of the steady state of the compressible Navier-Stokes-Poisson equations with the non-flat doping profile.
We prove the global existence of classical solutions near the steady state for the large doping profile. For the small doping profile, we prove the time decay rates of the solution provided that the initial perturbation belongs to $L^p$ with $1\le p< 3/2$.
\end{abstract}

\maketitle

\section{Introduction}\label{section1}
The dynamics of charged particles of one carrier type (e.g., electrons) can be described by the compressible Navier-Stokes-Poisson equations:
\begin{align}  \label{NSP}
\begin{cases}
\displaystyle\partial_t\rho+\diver  (\rho u)=0,   \\
\displaystyle\partial_t(\rho u)+\diver (\rho u\otimes u)+\na p(\rho)-\mu\Delta u-(\mu+\mu')\na\diver u=\rho\na\phi, \\
\Delta\phi=\rho-b,
\\(\rho,u)\mid_{t=0}=(\rho_0, u_0).
\end{cases}
\end{align}
Here  $\rho=\rho(t,x),  u=u(t,x)$ represent the density and velocity functions of the electrons respectively, at time $t\ge0$ and position $x\in \mathbb{R}^3$. The pressure $p =p (\rho )$ is a smooth function  with
$p '(\rho )>0$ for $\rho >0$. We assume that the constant viscosity coefficients $\mu$ and $\mu'$ satisfy the usual physical conditions
\begin{align}\label{viscosity}
\mu>0,\quad\mu'+\frac23\mu\ge0.
\end{align}
The self-consistent electric potential $\phi=\phi(t,x)$ is coupled with the density through the Poisson equation, where the function $b=b(x)$ is the doping profile for the ions. We assume that $b$ is a smooth function satisfying
\begin{align}\label{doping condition}
b(x)>0,\quad \lim_{|x|\to+\infty}b(x)=\bar{b}>0.
\end{align}

For the pressure law $p(\rho)=\rho^\gamma$ with the adiabatic exponent $\gamma> {3}/{2}$, the global existence of weak solutions was obtained by \cite{KS2008} when the spatial dimension is three in the framework of Lions-Feireisl for the compressible Navier-Stokes equations \cite{L,FNP}. This result was later extended by \cite{TW} to the case $\gamma>1$ when the dimension is two, where the authors introduced an idea to overcome the new difficulty caused by that the Poisson term $\rho\nabla\phi$ may not be integrable when $\gamma$ is close to one. The large-time behavior of weak solutions towards the steady state was also considered in \cite{TW}. When the doping profile is flat, $i.e.$, $b(x)=\bar b$, the steady state of \eqref{NSP} is the trivial constant one $(\bar \rho,0,0)$ with $\bar\rho=\bar b$. The unique global solution around this constant state in $H^N$ was proved by \cite{LMZ} in the framework of Matsumura-Nishida for the Navier-Stokes equations \cite{MN1}. Moreover, when the initial perturbation $(\rho_0-\bar\rho,u_0)$ is small in $L^1$, the $L^2$ decay of the solution to \eqref{NSP} was also obtained in \cite{LMZ}:
\begin{align}\label{LNSP}
\norm{(\rho-\bar\rho)(t)}_{L^2}
\lesssim(1+t)^{-\frac{3}{4} }\ \text{ and }\norm{u  (t)}_{L^2}\lesssim(1+t)^{-\frac{1}{4} }.
\end{align}
This may imply that the presence of the electric field slows down the time decay rate of the velocity with the factor $1/2$ compared to the Navier-Stokes equations \cite{MN2,P}. It was proved by transforming the system \eqref{NSP} into the Navier-Stokes equations with a non-local force
\begin{align}
\begin{cases}
\partial_t\rho+{\rm div}(\rho  u )=0
\\\partial_t(\rho  u) +{\rm div}(\rho  u  \otimes u )+\nabla p(\rho)-\mu\Delta u -(\mu+\mu')\na \diver u =\rho\nabla\Delta^{-1}(\rho-\bar\rho).
\end{cases}
\end{align}
However, the author in \cite{W} gave a different (contrary) comprehension of the effect of the electric field on the time decay rates of the solution. When the initial perturbation $(\rho_0-\bar\rho,u_0,\nabla\phi_0)$ belongs to $L^p$ with $1<p\le 2$, the $L^2$ decay of the solution to \eqref{NSP} was obtained in \cite{W}:
\begin{align}
\norm{(\rho-\bar\rho)(t)}_{L^2}
\ls(1+t)^{-\frac{3}{2}\left(\frac{1}{p}-\frac{1}{2}\right)-\frac{1}{2}}\ \text{and }\norm{u  (t)}_{L^2}\ls(1+t)^{-\frac{3}{2}\left(\frac{1}{p}-\frac{1}{2}\right)}.
\end{align}
In this sense, the electric field enhances the time decay rate of the density with the factor $1/2$! This can be understood well from the physical point of view since we get an additional dispersive effect from the repulsive electric force.

In this paper, we will study the asymptotic stability of the steady state of the system \eqref{NSP} with the non-flat doping profile $b(x)$.
A steady state $(\rho_s,\phi_s)$ with $u_s\equiv0$ of \eqref{NSP} must satisfy
\begin{align}  \label{state}
\begin{cases}
\displaystyle\na p(\rho_s)=\rho_s\na\phi_s, \\
\Delta\phi_s=\rho_s-b.
\end{cases}
\end{align}
We will record the existence and uniqueness of the  solution to \eqref{state} in Proposition \ref{prop}.

\smallskip\smallskip
\noindent \textbf{Notations.}
We use $L^{p}(\mathbb{R}^{3})$, $1\le p\le
\infty $ to denote the $L^{p}$ spaces with norm $
\norm{\cdot}_{L^{p}}$, and $W^{k,p}(\mathbb{R}^{3})$ to denote the usual
Sobolev spaces with norm $\norm{\cdot}_{W^{k,p}}$, and $H^k=W^{k,2}$. $\na ^{\ell }$ with $\ell \in \mathbb{R}$ stands for the usual spatial derivatives of order $
\ell $; we allow that $\ell <0$ or $\ell $ is not a positive integer.

Throughout this paper, we let $C$ denote the universal positive constants. We will use $A \lesssim B$ if $A \le C B$ and $A \gt B$ if $A \ge C B$, and we may write $\frac{d}{dt}A+B\ls D$ for $\frac{d}{dt}A+CB\lesssim D$. For simplicity, we write $\norm{(A,B)}_{X}:=\norm{A}_{X}+\norm{B}_{X}$ and $\int f:=\int_{\r3}f\, dx$.

Our first main result of the global solutions to \eqref{NSP} near the steady state for the large doping profile is stated as the following theorem.
\begin{theorem}\label{existence}
Assume that $\na b(x)\in H^k$ with $k\ge 3$, and $(\rho_s ,\phi_s)$ of \eqref{state} is constructed in Proposition \ref{prop}. If $\norm{(\rho_0-\rho_s,u_0)}_{H^k}+\norm{\nabla^{-1}(\rho_0-\rho_s)}_{L^2}$ is sufficiently small,
then there exists a unique global solution $(\rho ,u,\na\phi )$ to the system \eqref{NSP} such that for all $t\ge0$,
\begin{align}\label{energy inequlity}
&\norm{(\rho-\rho_s,u)(t)}_{H^k}^{2} +\norm{(\na\phi-\na\phi_s)(t)}_{L^2}^{2} +\int_{0}^{t}\left(
\norm{(\rho-\rho_s)(\tau)}_{H^{k}}^{2}+\norm{\na u(\tau)}
_{H^k}^{2}\right) d\tau \nonumber\\
&\quad\le C\left(\norm{(\rho_0-\rho_s, u_0)}_{H^k}^2+\norm{\nabla^{-1}(\rho_0-\rho_s)}_{L^2}^2\right).
\end{align}
\end{theorem}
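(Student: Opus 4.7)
The plan is to work with perturbation variables $\sigma=\rho-\rho_s$ and $\psi=\phi-\phi_s$, and to establish a uniform a priori bound of the form \eqref{energy inequlity} on any short time interval via an $H^k$ energy method; combined with a standard local existence result, a continuation argument then yields the global solution. The main structural observation is that, introducing the enthalpy $h$ with $h'(\rho)=p'(\rho)/\rho$, the steady-state relation in \eqref{state} reads $\nabla h(\rho_s)=\nabla\phi_s$. Dividing the momentum equation by $\rho$ and subtracting this identity makes the potentially large background coefficient $\nabla\phi_s$ disappear completely, and the perturbed system becomes
\begin{align*}
&\partial_t\sigma+\rho_s\diver u+u\cdot\nabla\rho_s=-\diver(\sigma u),\\
&\partial_t u+\nabla\bigl(h(\rho)-h(\rho_s)\bigr)-\nabla\psi=\tfrac{\mu}{\rho}\Delta u+\tfrac{\mu+\mu'}{\rho}\nabla\diver u-u\cdot\nabla u,\\
&\Delta\psi=\sigma.
\end{align*}
This cancellation is essential since the theorem allows a large doping profile, so nothing about $\rho_s$ or $\nabla\phi_s$ is a priori small.

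For the basic $L^2$ estimate, I test the first equation against $h(\rho)-h(\rho_s)$ and the momentum equation (in its original conservation form) against $u$. The Poisson coupling collapses to an electric energy via
\begin{align*}
\int\rho u\cdot\nabla\psi=-\int\diver(\rho u)\,\psi=\int\partial_t\sigma\cdot\psi=-\tfrac{1}{2}\tfrac{d}{dt}\int|\nabla\psi|^2,
\end{align*}
which both explains why the initial norm must include $\|\nabla^{-1}(\rho_0-\rho_s)\|_{L^2}\sim\|\nabla\psi_0\|_{L^2}$, and provides $\tfrac{1}{2}\|\nabla\psi\|_{L^2}^2$ as a component of the natural energy. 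The zero-order estimate becomes $\tfrac{d}{dt}(\|\sigma\|_{L^2}^2+\|u\|_{L^2}^2+\|\nabla\psi\|_{L^2}^2)+\|\nabla u\|_{L^2}^2\ls(\text{cubic remainders})$. Applying $\nabla^\ell$ for $1\le\ell\le k$ and repeating the same energy argument produces dissipation of $\|\nabla u\|_{H^k}$; the commutators generated by the variable coefficients $\rho_s$, $\nabla\rho_s$ and the nonlinearities are controlled by standard Moser-type inequalities, using $\nabla b\in H^k$ and the embedding $H^2\hookrightarrow L^\infty$.

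The missing dissipation of $\|\sigma\|_{H^k}$ is recovered by the Matsumura-Nishida-type cross estimate: test $\nabla^{\ell-1}$(momentum equation) against $\nabla^{\ell}\sigma$, exploit $\nabla(h(\rho)-h(\rho_s))=h'(\rho_s)\nabla\sigma+(\text{lower order})$ with $h'(\rho_s)\gt 1$, and extract $\|\nabla^{\ell}\sigma\|_{L^2}^2$ modulo an interaction term $\tfrac{d}{dt}\int\nabla^{\ell-1}u\cdot\nabla^{\ell}\sigma$ which is absorbed into a modified energy. Summing over $1\le\ell\le k$ yields the consolidated inequality
\begin{align*}
\tfrac{d}{dt}\widetilde{\mathcal{E}}_k(t)+c\bigl(\|\sigma(t)\|_{H^k}^2+\|\nabla u(t)\|_{H^k}^2\bigr)\ls\sqrt{\widetilde{\mathcal{E}}_k(t)}\,\bigl(\|\sigma(t)\|_{H^k}^2+\|\nabla u(t)\|_{H^k}^2\bigr),
\end{align*}
with $\widetilde{\mathcal{E}}_k\sim\|(\sigma,u)\|_{H^k}^2+\|\nabla\psi\|_{L^2}^2$. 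Under the smallness of the initial data, the right-hand side is absorbed, and integration in time delivers \eqref{energy inequlity}. The hard part of this scheme is precisely the one that the enthalpy reformulation resolves at the outset: without it, the forcing $\sigma\nabla\phi_s$ in the momentum equation produces a term at every order of differentiation that cannot be absorbed by either the viscous dissipation or by smallness, and this blocks closure under the theorem's hypothesis permitting a large doping profile. Once that cancellation is in hand, all remaining difficulties are of the standard hyperbolic-parabolic commutator variety.
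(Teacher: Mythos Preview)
Your enthalpy reformulation is exactly the right starting point, and it correctly eliminates the dangerous term $\sigma\nabla\phi_s$. However, there is a genuine gap in the higher-order estimates: the commutators generated by the variable coefficients $\rho_s$, $h'(\rho_s)$, and $1/\rho_s$ are \emph{not} cubic remainders. When you apply $\nabla^\ell$ to the perturbed continuity and momentum equations and carry out the energy pairing, the leading linear coupling terms cancel, but the commutators $[\nabla^\ell,\rho_s]\diver u$, $[\nabla^\ell,h'(\rho_s)]\nabla\sigma$, and $[\nabla^\ell,1/\rho_s]\Delta u$ produce contributions of the form
\[
C(\|\nabla\rho_s\|_{H^k})\,\|\nabla^\ell\sigma\|_{L^2}\bigl(\|\nabla^\ell u\|_{L^2}+\|\nabla u\|_{L^2}\bigr)
\quad\text{and}\quad
C(\|\nabla\rho_s\|_{H^k})\,\|\nabla^{\ell+1} u\|_{L^2}\|\nabla^\ell u\|_{L^2}.
\]
These carry a coefficient depending on the size of $\nabla\rho_s$, which under the theorem's hypotheses is \emph{large}, not of order $\sqrt{\widetilde{\mathcal{E}}_k}$. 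Consequently your consolidated inequality with right-hand side $\sqrt{\widetilde{\mathcal{E}}_k}\,(\text{dissipation})$ is not attainable in this form, and the absorption argument you describe fails.

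The paper's remedy is a hierarchical closure. After Young's inequality, the $\ell$-th order estimate reads
\[
\frac{d}{dt}\int\bigl(h'(\rho_s)|\nabla^\ell\sigma|^2+\rho_s|\nabla^\ell u|^2\bigr)+\|\nabla^{\ell+1}u\|_{L^2}^2
\lesssim (\delta+\varepsilon)\bigl(\|\nabla^\ell\sigma\|_{L^2}^2+\|\sigma\|_{L^2}^2\bigr)
+C_\varepsilon\bigl(\|\nabla^\ell u\|_{L^2}^2+\|\nabla u\|_{L^2}^2\bigr),
\]
so the large error is thrown entirely onto $u$-norms \emph{one order below} the dissipation gained. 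The crucial observation is that at $\ell=0$ no such commutators arise at all, and the zero-order estimate delivers clean dissipation of $\|\nabla u\|_{L^2}^2$. One then combines the estimates top-down with small weights $\epsilon_3\ll\epsilon_2\ll\epsilon_1\ll1$, so that the large $C_\varepsilon\|\nabla u\|_{L^2}^2$ term at each level is absorbed by the dissipation supplied by the zero-order step. Only after this cascading absorption does one obtain a closed differential inequality. Your outline needs this mechanism; the phrase ``controlled by standard Moser-type inequalities'' hides precisely the place where the argument would otherwise break down for a large doping profile.
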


\begin{Remark}
In Theorem \ref{existence}, the initial condition on the smallness of $\norm{\nabla^{-1}(\rho_0-\rho_s)}_{L^2}$ is required so that the initial potential energy $\norm{\na\phi_0-\na\phi_s}_{L^2}$ is small. Indeed, since $ \phi-\phi_s $ satisfies the Poisson equation $\Delta(\phi-\phi_s )=\rho-\rho_s$, we have
$$\norm{\na\phi_0-\na\phi_s}_{L^2}=\norm{\na\De^{-1}(\rho_0-\rho_s)}_{L^2}=\norm{\nabla^{-1}(\rho_0-\rho_s)}_{L^2}.$$
Such condition can be guaranteed by that, for instance, $\norm{\rho_0-\rho_s}_{L^{6/5}}$ is small.
\end{Remark}

Our second main result of the time decay rates of the solution to \eqref{NSP} towards the steady state for the small doping profile is stated as the next theorem.
\begin{theorem}\label{decay}
Assume that the assumptions of Theorem \ref{existence} hold for $k\ge4$. If $\norm{\na b}_{H^k}+\norm{b-\bar b}_{L^r}$ with $1< r<3/2$ is sufficiently small and $\norm{(\nabla^{-1}(\rho_0-\rho_s),u_0)}_{L^p}$ with $1\le p<3/2$ is finite,
then for $0\le\ell\le 1/2$,
\begin{align}\label{decay11}
\norm{\na^{\ell}(\rho-\rho_s)(t) }_{H^{k-\ell}}\le C_0(1+t)^{-\frac{3}{2}\left(\frac{1}{\max\{p,r\}}-\frac{1}{2}\right)-\frac{\ell}{2}-\frac{1}{2}}
\end{align}
and for $0\le\ell\le 3/2$,
\begin{align}\label{decay1}
\norm{\na^\ell u (t)}_{H^{k-\ell}}\le C_0(1+t)^{-\frac{3}{2}\left(\frac{1}{\max\{p,r\}}-\frac{1}{2}\right)-\frac{\ell}{2}},
\end{align}
and
\begin{align}\label{decay2}
\norm{ (\rho-\rho_s,u )(t)}_{L^\infty}\le C_0(1+t)^{-\frac{3}{2}\left(\frac{1}{\max\{p,r\}}-\frac{1}{2}\right)-\frac{3}{4}},
\end{align}
where $C_0$ is a positive constant depending on the initial data.
\end{theorem}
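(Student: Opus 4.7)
The plan is to combine the uniform $H^k$ bound from Theorem \ref{existence} with a Duhamel representation based on the constant-coefficient linearization around the background state $(\bar\rho,0)$, in the spirit of \cite{W,LMZ}. Writing $\sigma=\rho-\rho_s$, $\psi=\phi-\phi_s$ (so that $\Delta\psi=\sigma$) and $\eta:=\rho_s-\bar\rho$---which is small in $L^r\cap H^k$ by Proposition \ref{prop} once $b-\bar b$ is small in the corresponding norms---the perturbation system splits as
\begin{align*}
\pa_t\sigma + \bar\rho\,\diver u &= N_1+R_1,\\
\bar\rho\,\pa_t u + p'(\bar\rho)\na\sigma - \mu\De u -(\mu+\mu')\na\diver u - \bar\rho\na\psi &= N_2+R_2,
\end{align*}
where $N_i$ is at least quadratic in $(\sigma,u,\na\psi)$ and $R_i$ collects the variable-coefficient remainders that are multilinear in $(\eta,\na\eta)$ and $(\sigma,u,\na\psi)$. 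This isolates a constant-coefficient linear operator $A$ for which the $L^p$--$L^2$ decay
\begin{equation*}
\|\na^\ell e^{-tA}(\sigma_0,u_0)\|_{L^2}\ls (1+t)^{-\frac32(\frac1p-\frac12)-\frac\ell2-\frac{\chi}{2}}\bigl(\|(\sigma_0,u_0)\|_{L^p}+\|\na^\ell(\sigma_0,u_0)\|_{L^2}\bigr)
\end{equation*}
is available from \cite{W}, with the enhancement $\chi=1$ on the density component and $\chi=0$ on the velocity, reflecting the dispersion gained from the Poisson coupling.

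I would then refine the energy method of Theorem \ref{existence} level by level. For each $0\le\ell\le k$, applying $\na^\ell$ and testing against the usual fields yields a tiered Lyapunov inequality
\begin{equation*}
\frac{d}{dt}\CE_\ell + \CD_\ell \ls \bigl(\|(\sigma,u)\|_{H^k}+\|\eta\|_{H^k}\bigr)\CD_\ell,
\end{equation*}
with $\CE_\ell\simnew\|\na^\ell(\sigma,u)\|_{L^2}^2$ and $\CD_\ell\simnew\|\na^\ell\sigma\|_{L^2}^2+\|\na^{\ell+1}u\|_{L^2}^2$. Coupled with the Duhamel formula
\begin{equation*}
(\sigma,u)(t)=e^{-tA}(\sigma_0,u_0)+\int_0^t e^{-(t-s)A}\bigl(N+R\bigr)(s)\,ds,
\end{equation*}
this produces, for each $\ell$, an integral inequality of the form
\begin{equation*}
\|\na^\ell(\sigma,u)(t)\|_{L^2}\le C_0(1+t)^{-\alpha_\ell}+\int_0^t (1+t-s)^{-\beta_\ell}\|(N,R)(s)\|_{L^p\cap L^2}\,ds,
\end{equation*}
with $\alpha_\ell$ precisely the exponents in \eqref{decay11}--\eqref{decay1}.

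The final step is a bootstrap on
\begin{equation*}
M(t):=\sup_{0\le\tau\le t}\sum_{\ell}(1+\tau)^{\alpha_\ell}\|\na^\ell(\sigma,u)(\tau)\|_{H^{k-\ell}}.
\end{equation*}
The quadratic pieces $N_i$ are controlled by products of two decaying norms governed by $M(t)$, and after time integration contribute $O(M(t)^2)$; the $L^\infty$ estimate \eqref{decay2} then follows via Gagliardo--Nirenberg interpolation between the $\ell=1/2$ and $\ell=3/2$ bounds already obtained.

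The principal obstacle is the treatment of $R_i$, which is only \emph{linear} in $(\sigma,u,\na\psi)$ rather than quadratic, so its Duhamel integral cannot be absorbed into an $O(M(t)^2)$ term by smallness of the solution alone. Placing $\eta$ in $L^r\cap L^\infty$ and exploiting the product structure of $R_i$ converts it into an $L^p\cap L^2$ source whose contribution decays at the same rate as the initial-data part but with $p$ replaced by $r$---this is precisely why the final rate involves $\max(p,r)$, and why the smallness of $\|b-\bar b\|_{L^r}$ is essential for closing the bootstrap.
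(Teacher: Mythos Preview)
Your overall strategy matches the paper's: constant-coefficient linearization around $\bar\rho$, Duhamel against the linear semigroup with the $L^p$--$L^q$ decay of \cite{W,LMZ}, and a bootstrap combining the Duhamel bounds with refined energy estimates. You also correctly identify that the remainder terms linear in $(\sigma,u)$ with a factor of $\eta=\rho_s-\bar\rho$ are the reason $\max\{p,r\}$ appears and why smallness of $b-\bar b$ in $L^r$ is needed.

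There are, however, two concrete gaps. First, the tiered Lyapunov inequality as you state it is too clean to be true. When you estimate the $R$-terms in the $\nabla^\ell$-energy identity, products of the form $\nabla^\ell\eta\cdot u$ arise, and the only way to bound them is to put all derivatives on $\eta$ (small in $H^{k+1}$) and leave $\|u\|_{L^\infty}$ or $\|\varrho\|_{L^\infty}$ bare; similarly the handling of the Poisson term leaves $\|\nabla^\ell u\|_{L^2}$ and $\|\nabla^\ell\nabla\Phi\|_{L^2}$ on the right. The actual inequality (Proposition \ref{proposition decay}) reads
\[
\frac{d}{dt}\mathcal{E}_\ell^k + \mathcal{D}_\ell^k \lesssim \delta\bigl(\|\nabla^\ell\nabla\Phi\|_{L^2}^2+\|\nabla^\ell u\|_{L^2}^2+\|\varrho\|_{L^\infty}^2+\|u\|_{L^\infty}^2\bigr),
\]
with $\mathcal{D}_\ell^k \simeq \|\nabla^\ell\varrho\|_{H^{k-\ell}}^2+\|\nabla^{\ell+1}u\|_{H^{k-\ell}}^2$. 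The right-hand side is \emph{not} dominated by $\mathcal{D}_\ell^k$, so nothing absorbs; instead one adds $\|\nabla^\ell(u,\nabla\Phi)\|_{L^2}^2$ to both sides, applies Gronwall, and reduces the decay of the full $H^{k-\ell}$ norm to a time-integral of those four low-order quantities.

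Second, and as a direct consequence, the $L^\infty$ norms of $(\varrho,u)$ are not a corollary obtained at the end by interpolation---they sit inside the bootstrap quantity (the paper's $\mathcal{L}(t)$ in \eqref{hua L}) and are estimated directly via the $q=\infty$ case of the semigroup bound in Lemma \ref{llll}. Interpolating between the $\ell=1/2$ and $\ell=3/2$ estimates of \eqref{decay11}--\eqref{decay1} would yield only $\zeta+\tfrac34-\varepsilon$ for $\|u\|_{L^\infty}$, and is in any case circular since those estimates themselves need the $L^\infty$ control to close. The paper also splits the bootstrap into the cases $6/5\le r<3/2$ and $1<r<6/5$, because the time-integrability of $(1+t-\tau)^{-\frac32(\frac1r-\frac12)-\alpha}$ changes across $r=6/5$; in the second case one first closes a weaker bootstrap at level $\ell=1$ and then upgrades to $\ell=3/2$.
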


\begin{Remark}
In Theorem \ref{decay}, the initial condition on the boundedness of $\norm{\nabla^{-1}(\rho_0-\rho_s)}_{L^p}$ with $1\le p<3/2$ is required.
If $p>1$, then such condition can be guaranteed by that, for instance, $\rho_0-\rho_s={\rm div} f$ for some $f\in L^{p}$, thanks to the singular integral theory \cite{S}.
\end{Remark}

\begin{Remark}
Theorems \ref{existence} and \ref{decay} extend the previous results of \cite{W}. Note that the argument of proving the time decay of the solution in \cite{W}, which follows a pure energy method introduced in \cite{GW}, highly depends on that the doping profile is flat. To show the time decay of the solution for the non-flat doping profile, we shall need to employ a different argument as explained below.
\end{Remark}

\begin{Remark}
In \cite{DLUY,DUYZ1}, the authors proved the time decay of the compressible Navier-Stokes equations with an external potential force provided that the initial perturbation belongs to $L^p$ with $ 1\le p<6/5$. We may expect to employ our arguments of proving Theorem \ref{decay} to extend the range of $p$ therein to be $1\le p<3/2$. The key point is to introduce the fractional derivatives in the study of the time decay as already seen from \eqref{decay11}--\eqref{decay1}.
\end{Remark}

Theorems \ref{existence} and \ref{decay} will be proved in Sections \ref{section3}--\ref{section4}, respectively. To prove Theorem \ref{existence}, we will reformulate the system \eqref{NSP} into \eqref{NSP per} for the perturbation $\varrho=\rho-\rho_s$ and $u=u$.
To derive the energy estimates, the difficulty is caused by the terms on the left-hand side of \eqref{NSP per}. More precisely, we can not directly control as in \cite{HJW} the terms resulting from when the differential operator $\nabla^l$ commutates with the functions of $\rho_s$ since $\rho_s$ may not be close to a positive constant in the current case. To overcome this difficulty, we first notice that these commutator terms do not appear when $l=0$, which allows us to derive the zero-order energy estimates as stated in Lemma \ref{En le 1}.
When $l=1,2,\dots,k$, we will carry out the delicate analysis so that we can control these commutator terms with an (large) error term  as stated in Lemma \ref{En le 2}, briefly speaking, $\norm{\nabla u}_{L^2}$. However, this error has been controlled by the previous step. Hence, after recovering the dissipation estimates of $\varrho$ by Lemmas \ref{En le 3}--\ref{En le 4}, we can close the energy estimates.

To prove Theorem \ref{decay}, we will reformulate \eqref{NSP per} into the system \eqref{NSP per2} with constant coefficients. Since $\rho_s$ is close to $\bar\rho$, we can improve the energy estimates in the proof of Theorem \ref{existence} to deduce the estimates \eqref{2proof1}, which implies that the decay of $\norm{\na^{\ell-1} \varrho (t)}_{H^{k+1-\ell}}^2+\norm{\na^\ell u (t)}_{H^{k-\ell}}^2$ can be obtained from the decay of $\norm{ \na^{\ell-1}\varrho(t)}_{L^2}^2+\norm{ \na^{\ell}u(t)}_{L^2}^2+\norm{(\varrho,u)(t)}_{L^\infty}^2$ for $0\le \ell\le 3/2$. On the other hand, using the linear decay estimates of the linear Navier-Stokes-Poisson equations with constant coefficients, we can derive the estimates conversely.
This interplay would then be closed by the smallness of the solution and the doping profile. Finally, we may remark that if we do not introduce the fractional derivatives, then we can only choose $\ell=0,1$ in \eqref{2proof1}, which would result that we could only prove Theorem \ref{decay} for $1\le p<6/5$.

The rest of this paper is organized as follows. In Section \ref{section2}, we will prove the existence of the stationary solution to \eqref{state}. We will prove Theorem \ref{existence} and Theorem \ref{decay} in Section \ref{section3} and Section \ref{section4}, respectively. Some analytic
tools will be collected in Appendix.

\section{Steady state}\label{section2}

In this section, we record the following existence and uniqueness of the solutions to \eqref{state}.
\begin{Proposition}\label{prop}
Assume that $b(x)$ is a smooth function satisfying \eqref{doping condition}. Then there exists a unique classical solution $(\rho_s,\phi_s)$ to \eqref{state}. Moreover,
\begin{itemize}
\item $\rho_s$ has the positive upper and lower bounds, $i.e.,$
\begin{align}\label{b1}
0<\inf_{x\in\r3}b(x)\le \rho_s(x)\le \sup_{x\in\r3}b(x)<\infty;
\end{align}
\item if $\na b\in H^k$ with $k\ge 3$, then there exists a constant $C$ depending on $\norm{\na b}_{H^k}$ such that
 \begin{align}\label{b2}
\norm{\na \rho_s}_{H^k}\le C;
\end{align}

\item if $\norm{\na b}_{H^k}$ is sufficiently small, then
\begin{align}\label{123}
\norm{\na \rho_s}_{H^k}\ls\norm{\na b}_{H^k};
\end{align}

\item if further $
\norm{b-\bar b}_{L^r}<\infty $
with $1<r<\infty$, then
\begin{align}\label{Lr estimate}
\norm{\rho_s-\bar \rho}_{W^{2,r}}\ls \norm{b-\bar b}_{L^r},
\end{align}
where $\bar\rho=\bar b$.
\end{itemize}

\end{Proposition}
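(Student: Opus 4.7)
Using $p'(\rho)>0$ for $\rho>0$, set $F(\rho):=\int_{\bar\rho}^{\rho}p'(s)/s\,ds$, which is a strictly increasing diffeomorphism of $(0,\infty)$ with $F(\bar\rho)=0$. The first equation of \eqref{state} becomes $\na F(\rho_s)=\na\phi_s$, so after normalising $\phi_s\to 0$ at infinity we may take $\phi_s=F(\rho_s)$. With $u:=F(\rho_s)$ and $g(u):=F^{-1}(u)-\bar\rho$, the Poisson equation collapses to the single semilinear elliptic equation
\begin{equation*}
-\Delta u+g(u)=b-\bar b\qquad\text{on }\r3,
\end{equation*}
where $g$ is smooth, strictly increasing, $g(0)=0$, and $g'(0)=\bar\rho/p'(\bar\rho)>0$. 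Thus \eqref{state} is equivalent to this single equation.

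\textbf{Existence, uniqueness, and \eqref{b1}.} The constants $\bar u:=F(\sup b)$ and $\underline u:=F(\inf b)$ are super- and sub-solutions, since $g(\bar u)=\sup b-\bar\rho\ge b-\bar b$ and symmetrically for $\underline u$; note also $\underline u\le 0\le \bar u$ because $\inf b\le\bar b\le\sup b$. A classical sub/super-solution scheme on an exhaustion $\{B_R\}$ with zero Dirichlet data, combined with interior Schauder estimates to pass to the limit, produces a classical solution $u\in[\underline u,\bar u]$; translating via $\rho_s=F^{-1}(u)$ gives \eqref{b1}. Uniqueness follows by writing the difference $w=u_1-u_2$ of two bounded solutions as $-\Delta w+c(x)w=0$ with $c(x):=\int_0^1 g'(u_2+sw)\,ds>0$ and invoking the maximum principle.

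\textbf{Regularity \eqref{b2} and smallness \eqref{123}.} By \eqref{b1}, $g'(u)$ is bounded between positive constants. Differentiating the reduced equation yields $-\Delta\na u+g'(u)\na u=\na b$; pairing with $\na u$ gives $\|\na u\|_{H^1}\ls\|\na b\|_{L^2}$. Iterating for $|\alpha|\le k$ and controlling the commutators $[\na^\alpha,g'(u)]$ by Moser composition and product estimates produces the a priori bound $\|\na u\|_{H^k}\le C(\|\na b\|_{H^k})$, hence \eqref{b2} via $\na\rho_s=(F^{-1})'(u)\na u$. When $\|\na b\|_{H^k}$ is small, the commutator terms are genuinely quadratic in $\na u$ and may be absorbed, upgrading the bound to the linear \eqref{123}.

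\textbf{The $L^r$ bound \eqref{Lr estimate} and main obstacle.} Rewrite the equation as $(-\Delta+g'(0))u=(b-\bar b)+[g'(0)u-g(u)]$. Calder\'on--Zygmund theory gives $(-\Delta+g'(0))^{-1}:L^r\to W^{2,r}$ bounded for $1<r<\infty$, and the remainder satisfies $|g'(0)u-g(u)|\ls u^2$, which by the $L^\infty$ smallness coming from \eqref{123} and interpolation is absorbed, yielding $\|u\|_{W^{2,r}}\ls\|b-\bar b\|_{L^r}$; since $\rho_s-\bar\rho$ is a smooth function of $u$ vanishing at $0$, \eqref{Lr estimate} follows. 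The one genuinely non-routine step is constructing a global solution on $\r3$ with the correct behavior at infinity: the sub/super-solution scheme with constant barriers is natural but must be realised by approximation on balls, and the passage to the limit leans crucially on the positive lower bound for $g'$ from \eqref{b1} together with interior elliptic regularity. Everything else is a standard bootstrap around the invertible linearization $-\Delta+g'(0)$.
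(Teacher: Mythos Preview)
Your proposal is correct and, for the $W^{2,r}$ estimate, follows the same underlying idea as the paper---linearise around the constant state and absorb a small remainder---but the packaging differs in two noteworthy ways.

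First, the paper does not prove existence, uniqueness, \eqref{b1}, \eqref{b2}, or \eqref{123} at all: it simply cites \cite{HJW} for $k=3$ and remarks that higher $k$ is handled identically. Your sub/super-solution construction with constant barriers $F(\inf b)$ and $F(\sup b)$, together with the monotonicity/maximum-principle uniqueness argument and the energy bootstrap for \eqref{b2}--\eqref{123}, is a self-contained sketch of what \cite{HJW} presumably contains. This is more informative than the paper's proof, though of course also longer.

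Second, for \eqref{Lr estimate} the paper works directly with $f=\rho_s-\bar\rho$ and the divergence-form identity
\[
-h'(\bar\rho)\Delta f+f=\diver\!\big((h'(\rho_s)-h'(\bar\rho))\nabla f\big)+(b-\bar b),
\]
absorbing the right-hand side via $\norm{h'(\rho_s)-h'(\bar\rho)}_{W^{1,\infty}}\ls\norm{\nabla\rho_s}_{H^k}$ being small. You instead pass to $u=F(\rho_s)$ and write $(-\Delta+g'(0))u=(b-\bar b)+[g'(0)u-g(u)]$, absorbing the quadratic remainder $O(u^2)$ via $L^\infty$ smallness of $u$. Both routes rest on the same smallness (since $\norm{\rho_s-\bar\rho}_{L^\infty}\ls\norm{\nabla\rho_s}_{H^1}$ by Gagliardo--Nirenberg), and both invoke the $L^r$ boundedness of $(-\Delta+c)^{-1}$ for $c>0$. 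Your version has the conceptual advantage that the reduced equation $-\Delta u+g(u)=b-\bar b$ with $g$ strictly increasing is a clean monotone semilinear problem; the paper's version stays closer to the original unknown $\rho_s$ and avoids composing back through $F^{-1}$ at the end.
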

\begin{proof}
The existence and uniqueness of the classical solutions to \eqref{state} satisfying the first three assertions were proved in \cite{HJW} for $k=3$, but the case $k\ge4$ can be handled in the same way and so we omit the proof. We may then focus on proving the last assertion {\it a priori}.

To this end, setting $h'(s)=p'(s)/s$, we deduce from \eqref{state} that
\begin{align}\label{eee}
\diver \left(h'(\rho_s)\na\rho_s\right)=\rho_s-b.
\end{align}
Writing $f=\rho_s-\bar{\rho}$ with $\bar\rho=\bar b$, we rewrite \eqref{eee} as
\begin{align}\label{ddd}
-h'(\bar \rho)\De f+f=\diver \left(\left(h'(\rho_s)-h'(\bar \rho)\right)\na f\right)+b-\bar{b}.
\end{align}
It then follows from the standard elliptic theory \cite{S} on \eqref{ddd} that for $1<r<\infty$,
\begin{align}
\norm{f}_{W^{2,r}}&\ls \norm{\diver \left(\left(h'(\rho_s)-h'(\bar \rho)\right)\na f\right)+b-\bar{b}}_{L^r}\nonumber
\\&\ls \norm{\na \rho_s}_{H^k}\norm{f}_{W^{2,r}} +\norm{b-\bar{b}}_{L^r}.
\end{align}
This implies \eqref{Lr estimate} since $\norm{\na \rho_s}_{H^k}$ is small by \eqref{123}.
\end{proof}

\section{Global solution with large doping profile}\label{section3}

In this section, we will construct the global solutions near the steady state to \eqref{NSP} for the large doping profile.
For this, we define the perturbation by
\begin{align}
\varrho=\rho-\rho_s,\ u=u,\ \Phi=\phi-\phi_s.
\end{align}
In order to reformulate the problem \eqref{NSP} properly, we introduce the enthalpy function
\begin{align}
h(z)=\int_1^z \frac{p'(s)}{s}\, ds.
\end{align}
We also introduce the Taylor expansion
\begin{align}\label{hhh}
h(\varrho+\rho_s)=h(\rho_s)+h'(\rho_s)\varrho+\mathcal{R},
\end{align}
where the remainder $\mathcal{R}$ is given by
\begin{align}\label{hhhggg}
\mathcal{R}=\int_{\rho_s}^{\varrho+\rho_s}h''(s)(\varrho+\rho_s-s)\,ds.
\end{align}
Then the problem \eqref{NSP} can be reformulated into the perturbed form of
\begin{align} \label{NSP per}
\begin{cases}
\displaystyle\pa_t\varrho+\diver(\rho_s u)=-\diver(\varrho  u),   \\
\displaystyle\pa_tu+\na(h'(\rho_s)\varrho) -\frac{1}{\rho_s}\left(\mu\Delta u+(\mu+\mu')\na\diver u\right)-\na\Phi\\
\displaystyle\quad=-u\cdot\na u-\na \mathcal{R} +\left(\frac{1}{\varrho+\rho_s}-\frac{1}{\rho_s}\right)\left(\mu\Delta u+(\mu+\mu')\na\diver u\right), \\
\Delta\Phi=\varrho, \\
(\varrho,u)\mid_{t=0}=(\varrho_0, u_0).
\end{cases}
\end{align}

\subsection{Energy estimates}

In this subsection, we will derive the a priori estimates for the solutions to the Navier-Stokes-Poisson equations \eqref{NSP per} by assuming that for sufficiently small $\delta>0$,
\begin{align}\label{a priori}
\norm{ (\varrho,u)(t)}_{H^k}+\norm{\na\Phi(t)}_{L^2}\le \delta.
\end{align}

We first derive the zero-order energy estimates for the solution itself.
\begin{lemma}\label{En le 1} It holds that
\begin{align}\label{energy 0}
\frac{d}{dt}\int\left(h'(\rho_s)\varrho^2+\rho_s|u|^2+|\na\Phi|^2\right)+\norm{\na u}_{L^2}^2
\ls\de\norm{ \varrho }_{L^2}^2.
\end{align}
\end{lemma}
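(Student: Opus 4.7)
The plan is to perform a weighted $L^2$ energy estimate on the perturbed system \eqref{NSP per} tailored to the variable coefficients $\rho_s$ and $h'(\rho_s)$. I would multiply the continuity equation by $h'(\rho_s)\varrho$ and the momentum equation by $\rho_s u$, integrate over $\r3$, and sum. Since $\rho_s$ is time-independent, the transient parts combine into $\tfrac{1}{2}\frac{d}{dt}\int(h'(\rho_s)\varrho^2+\rho_s|u|^2)$. The two cross terms cancel after one integration by parts:
\begin{equation*}
\int h'(\rho_s)\varrho\,\diver(\rho_s u)+\int\rho_s u\cdot\nabla(h'(\rho_s)\varrho)=\int\diver\bigl(\rho_s h'(\rho_s)\varrho u\bigr)=0.
\end{equation*}
Multiplying by $\rho_s u$ then dividing the viscous coefficient by $\rho_s$ makes the viscous block simplify to $\mu\|\nabla u\|_{L^2}^2+(\mu+\mu')\|\diver u\|_{L^2}^2$, which is bounded below by $c\|\nabla u\|_{L^2}^2$ thanks to \eqref{viscosity}.

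Next I would recover the Coulomb energy $\|\nabla\Phi\|_{L^2}^2$ from the term $-\int\rho_s u\cdot\nabla\Phi$. Integration by parts gives $\int\diver(\rho_s u)\Phi$, and substituting $\diver(\rho_s u)=-\partial_t\varrho-\diver(\varrho u)$ from the continuity equation together with $\Delta\partial_t\Phi=\partial_t\varrho$ from the Poisson equation yields
\begin{equation*}
-\int\rho_s u\cdot\nabla\Phi=\tfrac{1}{2}\frac{d}{dt}\int|\nabla\Phi|^2+\int\varrho u\cdot\nabla\Phi,
\end{equation*}
producing the desired evolution of the potential energy plus a single nonlinear correction.

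It remains to absorb all the nonlinearities. The a priori assumption \eqref{a priori} combined with the Sobolev embedding $H^k\hookrightarrow W^{1,\infty}$ (valid for $k\ge 3$) gives $\|(\varrho,\nabla\varrho,u,\nabla u)\|_{L^\infty}\lesssim\delta$, while Proposition~\ref{prop} supplies $\|\rho_s\|_{L^\infty}+\|\nabla\rho_s\|_{L^\infty}\le C$ and the smoothness of $h$, $h'$, $h''$ on the relevant range. The terms to treat are: the convection $-\int\rho_s u\cdot(u\cdot\nabla u)=\tfrac{1}{2}\int\diver(\rho_s u)|u|^2$; the Taylor remainder $\int\rho_s u\cdot\nabla\mathcal R$ with $|\mathcal R|\lesssim\varrho^2$ from \eqref{hhhggg}; the perturbed viscous term involving $\tfrac{1}{\varrho+\rho_s}-\tfrac{1}{\rho_s}=O(\varrho)$; the continuity nonlinearity $\int\nabla(h'(\rho_s)\varrho)\cdot\varrho u$, reshuffled by integration by parts; and $\int\varrho u\cdot\nabla\Phi$. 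Each is estimated with H\"older and Gagliardo--Nirenberg inequalities so as to produce a bound of the form $\varepsilon\|\nabla u\|_{L^2}^2+C_\varepsilon\delta\|\varrho\|_{L^2}^2$; for the last term, I would use $\|\nabla\Phi\|_{L^3}\lesssim\|\nabla\Phi\|_{L^2}^{1/2}\|\varrho\|_{L^2}^{1/2}$ (from $\Delta\Phi=\varrho$) and $\|u\|_{L^6}\lesssim\|\nabla u\|_{L^2}$ to obtain $|\int\varrho u\cdot\nabla\Phi|\lesssim\|\varrho\|_{L^2}^{3/2}\|\nabla u\|_{L^2}\|\nabla\Phi\|_{L^2}^{1/2}$, then close by Young's inequality and \eqref{a priori}.

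The main obstacle is that at this zero-order level there is no $\|\nabla\varrho\|_{L^2}$ dissipation available, so every term carrying $\nabla\varrho$ must be shifted by integration by parts onto $\rho_s$ or $u$ before Sobolev embedding is applied; this is precisely where the $H^k$-smallness of $(\varrho,u)$ (not merely $L^2$-smallness) enters the argument. Collecting the above and absorbing the $\varepsilon\|\nabla u\|_{L^2}^2$ pieces on the left yields \eqref{energy 0}.
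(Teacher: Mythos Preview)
Your proposal is correct and follows essentially the same approach as the paper: weight the continuity and momentum equations by $h'(\rho_s)\varrho$ and $\rho_s u$, use the key cancellation of the cross terms, extract $\tfrac{1}{2}\frac{d}{dt}\|\nabla\Phi\|_{L^2}^2$ via the Poisson equation, and bound all nonlinearities using \eqref{a priori}. The only cosmetic differences are that the paper estimates $\int\varrho u\cdot\nabla\Phi$ more directly via $\|\varrho\|_{L^2}\|u\|_{L^3}\|\nabla\Phi\|_{L^6}\lesssim\delta\|\varrho\|_{L^2}^2$ (using $\|\nabla\Phi\|_{L^6}\lesssim\|\varrho\|_{L^2}$), and handles the continuity nonlinearity $-\int h'(\rho_s)\varrho\,\diver(\varrho u)$ without integrating by parts, simply invoking $\|\nabla\varrho\|_{L^3}\lesssim\delta$; your integrated-by-parts variants work equally well.
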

\begin{proof}
Multiplying the first two equations in \eqref{NSP per} by $h'(\rho_s)\varrho$ and $\rho_s u$ respectively, summing up them and then integrating over $\r3$, we obtain
\begin{align} \label{yi 0jieu}
&\frac{1}{2}\frac{d}{dt}\int\left(h'(\rho_s)\varrho^2+\rho_s|u|^2\right)+\mu\norm{\na u}_{L^2}^2+\left(\mu+\mu'\right)\norm{\diver u}_{L^2}^2 -\int \rho_su\cdot\na\Phi\nonumber\\
&\quad=-\int \left(h'(\rho_s)\varrho\diver(\varrho u)+\rho_su\cdot(u\cdot\na u)+\rho_su\cdot\na\mathcal{R}\right)\nonumber\\
&\qquad+\int\rho_su\cdot\left(\frac{1}{\varrho+\rho_s}-\frac{1}{\rho_s}\right)\left(\mu\Delta u+(\mu+\mu')\na\diver u\right).
\end{align}
Here we have used the integration by parts to have the cancelation:
\begin{align}
\int&\left( h'(\rho_s)\varrho \diver(\rho_s u)+\rho_s u\cdot \na(h'(\rho_s)\varrho)\right)=0.
\end{align}

To estimate the Poisson term on the left-hand side of \eqref{yi 0jieu}, we integrate by parts by several times, use the first equation and the Poisson equation in $\eqref{NSP per}$, by H\"older's and Sobolev's inequalities and the a priori bound \eqref{a priori}, and employ the standard elliptic estimates $\norm{\nabla^k \Phi}_{L^2}\ls\norm{\nabla^{k-2}\varrho}_{L^2}$ for $k\ge 2$, to deduce
\begin{align}\label{san 0jieu}
-\int \rho_su\cdot\na\Phi&=\int\diver(\rho_su)\Phi=-\int\pa_t\varrho\Phi-\int\diver(\varrho u)\Phi\nonumber\\
&=-\int\pa_t\De\Phi\Phi+\int\varrho u\cdot\na\Phi=\frac12\frac{d}{dt}\int|\na\Phi|^2+\int\varrho u\cdot\na\Phi\nonumber\\
&\ge\frac12\frac{d}{dt}\int|\na\Phi|^2-C\norm{\varrho}_{L^2}\norm{u}_{L^3}\norm{\na\Phi}_{L^6}\nonumber\\
&\ge\frac12\frac{d}{dt}\int|\na\Phi|^2-C\de\norm{\varrho}_{L^2}^2.
\end{align}

We now estimate the terms on the right-hand side of \eqref{yi 0jieu}. By H\"{o}lder's and Sobolev's inequalities, \eqref{a priori} and \eqref{b1}, we obtain
\begin{align}
&-\int \left(h'(\rho_s)\varrho\diver(\varrho u)+\rho_su\cdot(u\cdot\na u)\right)\nonumber\\
&\quad+\int\rho_su\cdot\left(\frac{1}{\varrho+\rho_s}-\frac{1}{\rho_s}\right)\left(\mu\Delta u+(\mu+\mu')\na\diver u\right)\nonumber\\
&\quad\ls\norm{\varrho}_{L^2}\norm{\na\varrho}_{L^3}\norm{u}_{L^6}+\norm{\varrho}_{L^2}\norm{\varrho}_{L^\infty}\norm{\diver u}_{L^2}\nonumber\\
&\qquad+\norm{u}_{L^6}\norm{u}_{L^3}\norm{\na u}_{L^2}+\norm{u}_{L^6}\norm{\varrho}_{L^2}\norm{\na^2 u}_{L^3}\nonumber\\
&\quad\ls\de\left(\norm{\varrho}_{L^2}^2+\norm{\na u}_{L^2}^2\right).
\end{align}
For the remaining term involved with $\na\mathcal{R}$, we integrate by parts to have
\begin{align}\label{si 0jieu}
-\int \rho_su\cdot\na\mathcal{R}&=\int \na\rho_s\cdot u\mathcal{R}+\rho_s\diver u\mathcal{R}\nonumber\\
&\ls\norm{u}_{L^\infty}\norm{\varrho}_{L^2}^2+\norm{\diver u}_{L^2}\norm{\varrho}_{L^2}\norm{\varrho}_{L^\infty}\nonumber\\
&\ls\de\left(\norm{\varrho}_{L^2}^2+\norm{\na u}_{L^2}^2\right).
\end{align}
Here we have used the fact from \eqref{hhh}--\eqref{hhhggg} that $\mathcal{R}= O(\varrho^2)$.

Plugging the estimates \eqref{san 0jieu}--\eqref{si 0jieu} into \eqref{yi 0jieu}, since $\de$ is small, we then conclude \eqref{energy 0}.
\end{proof}

We then derive the energy estimates for the derivatives of the solution.
\begin{lemma}\label{En le 2}
For $l=1,\dots,k$, we have that for any $\va>0$,
\begin{align}\label{energy 1}
&\frac{d}{dt}\int\left(h'(\rho_s)|\na^l\varrho|^2+\rho_s|\na^lu|^2\right)+\norm{\na^{l+1}u}_{L^2}^2\nonumber\\
&\quad \ls (\delta+\varepsilon)\left( \norm{ \na^{l }\varrho }_{L^2}^2+ \norm{\varrho }_{L^2}^2\right )+C_\varepsilon\left( \norm{ \na^{l }u }_{L^2}^2+ \norm{\na u }_{L^2}^2\right ).
\end{align}
\end{lemma}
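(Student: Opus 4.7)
The plan is to mimic the proof of Lemma \ref{En le 1} at the level of the $l$-th spatial derivative, the new issue being that when $\nabla^l$ is commuted with the variable coefficients $\rho_s$, $h'(\rho_s)$ and $1/\rho_s$, the resulting commutators no longer carry any smallness factor; their sizes are controlled only by the \emph{fixed} (not small) quantity $\|\nabla\rho_s\|_{H^k}\le C$ from Proposition \ref{prop}. First I would apply $\nabla^l$ to the first two equations of \eqref{NSP per}, take the $L^2$ inner products with $h'(\rho_s)\nabla^l\varrho$ and $\rho_s\nabla^l u$ respectively, add them, and integrate over $\R^3$. The time derivatives produce $\tfrac12 \tfrac{d}{dt}\int\bigl(h'(\rho_s)|\nabla^l\varrho|^2+\rho_s|\nabla^l u|^2\bigr)$.

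Next I would perform the principal/commutator splitting
\[
\nabla^l\diver(\rho_s u)=\diver(\rho_s\nabla^l u)+\mathcal C_1,\qquad \nabla^{l+1}(h'(\rho_s)\varrho)=\nabla(h'(\rho_s)\nabla^l\varrho)+\mathcal C_2,
\]
and analogously write $\nabla^l\bigl(\tfrac{1}{\rho_s}(\mu\Delta u+(\mu+\mu')\nabla\diver u)\bigr)=\tfrac{1}{\rho_s}\bigl(\mu\Delta\nabla^l u+(\mu+\mu')\nabla\diver\nabla^l u\bigr)+\mathcal C_3$. The principal pieces combine through exactly the same integration by parts cancellation used in the proof of \eqref{yi 0jieu}, yielding the dissipation $\mu\|\nabla^{l+1}u\|_{L^2}^2+(\mu+\mu')\|\nabla^l\diver u\|_{L^2}^2$ and leaving: the three commutators $\mathcal C_j$, the Poisson term $-\int \rho_s\nabla^l u\cdot\nabla^{l+1}\Phi$, and the genuinely nonlinear contributions from the right-hand side of \eqref{NSP per} together with the enthalpy remainder $\nabla^{l+1}\mathcal R$.

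The hard part will be the commutator analysis. Using Kato--Ponce/Moser-type inequalities (to be collected in the Appendix) together with $\|\nabla\rho_s\|_{H^k}\le C$, each $\mathcal C_j$ paired against $h'(\rho_s)\nabla^l\varrho$ or $\rho_s\nabla^l u$ is bounded schematically by
\[
C\bigl(\|\nabla\rho_s\|_{H^k}\bigr)\bigl(\|\nabla^l\varrho\|_{L^2}+\|\nabla^l u\|_{L^2}\bigr)\|\nabla u\|_{H^{l-1}}.
\]
Applying $\varepsilon$-Young and then interpolating $\|\nabla u\|_{H^{l-1}}^2\ls \|\nabla u\|_{L^2}^2+\|\nabla^l u\|_{L^2}^2$ produces exactly the acceptable large error $C_\varepsilon(\|\nabla^l u\|_{L^2}^2+\|\nabla u\|_{L^2}^2)$ on the right-hand side together with a small $\varepsilon\|\nabla^l\varrho\|_{L^2}^2$. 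This is the structural reason the lemma is formulated with $\|\nabla u\|_{L^2}^2$ rather than an $\varepsilon$-small coefficient in front of it: that piece has already been produced as dissipation in Lemma \ref{En le 1} and so can be absorbed after summing the hierarchy.

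Finally, the Poisson term is handled by Cauchy--Schwarz, the identity $\|\nabla^{l+1}\Phi\|_{L^2}=\|\nabla^{l-1}\varrho\|_{L^2}$ (from $\Delta\Phi=\varrho$ and Calder\'on--Zygmund), and the interpolation $\|\nabla^{l-1}\varrho\|_{L^2}^2\le \varepsilon\|\nabla^l\varrho\|_{L^2}^2+C_\varepsilon\|\varrho\|_{L^2}^2$ for $l\ge 2$ (trivial for $l=1$), so it contributes terms of the expected form. The remaining nonlinear terms are all at least quadratic in $(\varrho,u)$, so that Sobolev embedding, Moser's inequality and $\mathcal R=O(\varrho^2)$, combined with the \emph{a priori} bound \eqref{a priori}, yield bounds of the shape $\delta\bigl(\|\nabla^l\varrho\|_{L^2}^2+\|\nabla^{l+1}u\|_{L^2}^2\bigr)$ plus lower-order terms, the highest derivative piece being absorbed into the dissipation generated above. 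Collecting everything and choosing $\varepsilon$ small then gives \eqref{energy 1}.
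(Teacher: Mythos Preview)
Your plan is the paper's plan: commute $\nabla^l$ past the variable coefficients, let the principal parts reproduce the cancellation and the dissipation of Lemma \ref{En le 1}, and pay for the commutators with $\varepsilon$-Young and interpolation. Two imprecisions in your schematic bound deserve correction, however. First, for $\mathcal C_3=[\nabla^l,1/\rho_s](\mu\Delta u+(\mu+\mu')\nabla\diver u)$ the leading Kato--Ponce term is $\|\nabla(1/\rho_s)\|_{L^\infty}\|\nabla^{l+1}u\|_{L^2}$, not something inside $\|\nabla u\|_{H^{l-1}}$; pairing with $\rho_s\nabla^l u$ therefore yields $\|\nabla^l u\|_{L^2}\|\nabla^{l+1}u\|_{L^2}$, which must be split as $\eta\|\nabla^{l+1}u\|_{L^2}^2+C_\eta\|\nabla^l u\|_{L^2}^2$ and absorbed into the newly created dissipation (the paper does precisely this in its treatment of the viscous term, cf.\ \eqref{div u yi}--\eqref{zuosan}). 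Second, $\mathcal C_2=[\nabla^l,h'(\rho_s)]\nabla\varrho+[\nabla^l,\nabla h'(\rho_s)]\varrho$ paired with $\rho_s\nabla^l u$ gives $\|\nabla^l u\|_{L^2}\bigl(\|\nabla^l\varrho\|_{L^2}+\|\varrho\|_{L^2}\bigr)$, a $\varrho$-term rather than a $u$-term; after $\varepsilon$-Young this is exactly the source of the $(\delta+\varepsilon)(\|\nabla^l\varrho\|_{L^2}^2+\|\varrho\|_{L^2}^2)$ in \eqref{energy 1}. With these two adjustments your outline closes and matches the paper's argument.
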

\begin{proof}
Applying $\na^l$ to the first two equations in \eqref{NSP per} and then multiplying the resulting identities by $h'(\rho_s)\na^l\varrho$ and $\rho_s\na^lu$ respectively, summing up them and then integrating over $\r3$, we obtain
\begin{align} \label{yi u}
&\frac{1}{2}\frac{d}{dt}\int\left(h'(\rho_s)|\na^l\varrho|^2+\rho_s|\na^lu|^2\right)+\int \left( h'(\rho_s) \na^l\varrho \na^l\diver(\rho_s u)+\rho_s\na^lu\cdot \na^l \na(h'(\rho_s)\varrho)\right)\nonumber\\
&\quad-\int\rho_s\na^lu\cdot \na^l\left(\frac{1}{\rho_s}\left(\mu\Delta u+(\mu+\mu')\na\diver u\right)\right)- \int \rho_s\na^lu\cdot\na^l\na\Phi\nonumber\\
&\quad=-\int h'(\rho_s)\na^l\varrho\na^l\diver(\varrho u)-\int\rho_s\na^lu\cdot\na^l(u\cdot\na u)-\int\rho_s\na^lu\cdot\na^l\na\mathcal{R}\nonumber\\
&\qquad+\int\rho_s\na^lu\cdot \na^l\left(\left(\frac{1}{\varrho+\rho_s}-\frac{1}{\rho_s}\right)\left(\mu\Delta u+(\mu+\mu')\na\diver u\right)\right).
\end{align}

First, we estimate the terms on the left-hand side of \eqref{yi u}.
By integrating by parts by several times and employing the commutator notation \eqref{commutator}, we have
\begin{align}\label{yi zuoyi}
&\int  \left(h'(\rho_s) \na^l\varrho \na^l\diver(\rho_s u)+\rho_s\na^lu\cdot \na^l \na(h'(\rho_s)\varrho)\right) \nonumber\\
&\quad= \int  \left(h'(\rho_s) \na^l\varrho \na^l\diver(\rho_s u)-\diver(\rho_s\na^lu) \na^l  (h'(\rho_s)\varrho)\right) \nonumber\\
&\quad= \int  \left(h'(\rho_s) \na^l\varrho  \diver\left(\commu{\na^l,\rho_s} u\right)+\rho_s\na^lu\cdot\na \commu{\na^l,h'(\rho_s)}\varrho\right)\nonumber\\
&\quad= \int  h'(\rho_s) \na^l\varrho \left( \commu{\na^l,\rho_s}\diver u+\commu{\na^l,\na \rho_s}  u\right)\nonumber\\
&\qquad+\int\rho_s\na^lu\cdot \left(\commu{\na^l,h'(\rho_s)}\na  \varrho+\commu{\na^l,\na h'(\rho_s)}  \varrho   \right)  .
\end{align}
For the second term on the right-hand side of \eqref{yi zuoyi}, we employ the commutator estimates \eqref{commutator estimate} to have: for $l=1$,
\begin{align}
&\int  h'(\rho_s) \na\varrho \commu{\na,\na \rho_s}  u  \ls \norm{\na \varrho}_{L^2}\norm{\commu{\na ,\na \rho_s}  u}_{L^2}\nonumber\\
&\quad\ls \norm{\na \varrho}_{L^2} \norm{ \na^2 \rho_s }_{L^3}\norm{   u }_{L^6}
\ls \norm{\na \varrho}_{L^2}  \norm{  \na u }_{L^2};
\end{align}
for $l\ge 2$,
\begin{align}
&\int  h'(\rho_s) \na^l\varrho \commu{\na^l,\na \rho_s}  u \ls \norm{\na^l\varrho}_{L^2}\norm{\commu{\na^l,\na \rho_s}  u}_{L^2}\nonumber\\
&\quad\ls \norm{\na^l\varrho}_{L^2}\left(\norm{ \na^2 \rho_s }_{L^3}\norm{ \na^{l-1} u }_{L^6}+\norm{ \na^{l+1} \rho_s }_{L^2}\norm{u}_{L^\infty}\right)\nonumber\\
&\quad\ls \norm{\na^l\varrho}_{L^2}\left( \norm{ \na^{l } u }_{L^2}+ \norm{\na u }_{H^1}\right )
\ls \norm{\na^l\varrho}_{L^2}\left( \norm{ \na^{l } u }_{L^2}+ \norm{\na u }_{L^2}\right ).
\end{align}
Note that we have used \eqref{b2} and the interpolation from Lemma \ref{A1}: $\norm{\na^2 u }_{L^2}\ls\norm{ \na^{l } u }_{L^2}+ \norm{\na u }_{L^2}$ for $l\ge 2$. We may then conclude that this term can be bounded by
$$\norm{\na^l\varrho}_{L^2}\left( \norm{ \na^{l } u }_{L^2}+ \norm{\na u }_{L^2}\right ).$$
We can derive the same bound for the first term that for $l=1$,
\begin{align}
&\int  h'(\rho_s) \na\varrho \commu{\na,\rho_s}\diver u\lesssim\norm{\na\varrho}_{L^2}\norm{\commu{\na,\rho_s}\diver u}_{L^2}\nonumber\\
&\quad\ls \norm{\na \varrho}_{L^2} \norm{ \na \rho_s }_{L^\infty}\norm{ \na u }_{L^2}
\ls \norm{\na \varrho}_{L^2}  \norm{  \na u }_{L^2};
\end{align}
for $l\ge 2$,
\begin{align}
&\int  h'(\rho_s) \na^l\varrho \commu{\na^l, \rho_s} \diver u \ls \norm{\na^l\varrho}_{L^2}\norm{\commu{\na^l,\rho_s}\diver u}_{L^2}\nonumber\\
&\quad\ls \norm{\na^l\varrho}_{L^2}\left(\norm{ \na \rho_s }_{L^\infty}\norm{ \na^{l-1} \diver u }_{L^2}+\norm{ \na^l \rho_s }_{L^6}\norm{\diver u}_{L^3}\right )\nonumber\\
&\quad\ls \norm{\na^l\varrho}_{L^2}\left( \norm{ \na^{l} u }_{L^2}+ \norm{\na u }_{H^1}\right )
\ls \norm{\na^l\varrho}_{L^2}\left( \norm{ \na^{l } u }_{L^2}+ \norm{\na u }_{L^2}\right ).
\end{align}
Similarly, for the third and fourth terms, we have: for $l=1$,
\begin{align}
&\int\rho_s\na u\cdot \left(\commu{\na,h'(\rho_s)}\na  \varrho+\commu{\na,\na h'(\rho_s)}  \varrho   \right)\nonumber\\
&\quad\ls \norm{\na u}_{L^2}\left(\norm{\commu{\na,h'(\rho_s)}\na\varrho}_{L^2}+\norm{\commu{\na,\na h'(\rho_s)}\varrho}_{L^2}\right)\nonumber\\
&\quad\ls \norm{\na u}_{L^2}\left(\norm{\na h'(\rho_s)}_{L^\infty}\norm{\na\varrho}_{L^2}+\norm{\na^2h'(\rho_s)}_{L^3}\norm{\varrho}_{L^6}\right) \ls\norm{\na u}_{L^2}\norm{\na\varrho}_{L^2};
\end{align}
for $l\ge 2$,
\begin{align}
&\int\rho_s\na^lu\cdot \left(\commu{\na^l,h'(\rho_s)}\na  \varrho+\commu{\na^l,\na h'(\rho_s)}  \varrho   \right)\nonumber\\
&\quad\ls \norm{\na^lu}_{L^2}\norm{\commu{\na^l,h'(\rho_s)}\na\varrho}_{L^2}+\norm{\na^lu}_{L^2}\norm{\commu{\na^l,\na h'(\rho_s)}\varrho}_{L^2}\nonumber\\
&\quad\ls \norm{\na^lu}_{L^2}\left(\norm{ \na h'(\rho_s)}_{L^\infty}\norm{ \na^{l}\varrho}_{L^2}+\norm{\na^l h'(\rho_s)}_{L^6}\norm{\na\varrho}_{L^3}\right)\nonumber\\
&\qquad+\norm{\na^lu}_{L^2}\left(\norm{ \na^2 h'(\rho_s)}_{L^3}\norm{ \na^{l-1}\varrho}_{L^6}+\norm{\na^{l+1} h'(\rho_s)}_{L^2}\norm{\varrho}_{L^\infty}\right)\nonumber\\
&\quad\ls \norm{\na^lu}_{L^2}\left( \norm{ \na^{l}\varrho}_{L^2}+ \norm{\na\varrho}_{H^1}\right )
\ls \norm{\na^lu}_{L^2}\left( \norm{ \na^{l}\varrho}_{L^2}+ \norm{\na\varrho}_{L^2}\right ).
\end{align}
Hence, we may bound the third and fourth terms by
$$\norm{\na^lu}_{L^2}\left( \norm{ \na^{l }\varrho }_{L^2}+ \norm{\na \varrho }_{L^2}\right ).$$
We may thus conclude that
\begin{align}\label{zuoyi}
&\int\left(h'(\rho_s) \na^l\varrho \na^l\diver(\rho_s u)+\rho_s\na^lu\cdot \na^l \na(h'(\rho_s)\varrho)\right)\nonumber\\
&\quad\gt-\left( \norm{ \na^{l }\varrho }_{L^2}+ \norm{\na\varrho }_{L^2}\right )\left( \norm{ \na^{l }u }_{L^2}+ \norm{\na u }_{L^2}\right ).
\end{align}

Next, we compute the following term
\begin{align}\label{div u yi}
&-\int\rho_s\na^lu\cdot \na^l\left(\frac{1}{\rho_s}   \na\diver u \right)=-\int\rho_s\na^lu\cdot \na^l \left(\na \left(\frac{1}{\rho_s}    \diver u \right)-\na\left(\frac{1}{\rho_s}   \right) \diver u \right)\nonumber\\
&\quad=\int\left(\diver\left(\rho_s\na^lu\right) \na^l \left(\frac{1}{\rho_s}    \diver u \right)+\rho_s\na^lu\cdot \na^l \left(\na\left(\frac{1}{\rho_s}   \right) \diver u \right)\right)\nonumber\\
&\quad=\int|\na^l\diver u |^2+\int \rho_s\na^l \diver u \commu{ \na^l ,\frac{1}{\rho_s}}\diver u \nonumber\\
&\qquad+\int\na \rho_s\cdot\na^lu\na^l \left(\frac{1}{\rho_s}\diver u \right)+\int\rho_s\na^lu\cdot \na^l \left(\na\left(\frac{1}{\rho_s} \right) \diver u \right).
\end{align}
For the second term on the right-hand side of \eqref{div u yi}, we employ the commutator estimates  \eqref{commutator estimate} to obtain: for $l=1$,
\begin{align}\label{div u er}
&\int\rho_s\na\diver u \commu{ \na,\frac{1}{\rho_s} }  \diver u\ls\norm{\na\diver u}_{L^2}\norm{\commu{ \na,\frac{1}{\rho_s} } \diver u}_{L^2}\nonumber\\
&\quad\ls\norm{\na^2 u}_{L^2}\norm{\na \left(\frac{1}{\rho_s}\right)}_{L^\infty}\norm{\diver u }_{L^2}\ls \norm{\na^2 u}_{L^2}\norm{\na u }_{L^2};
\end{align}
for $l\ge2$,
\begin{align}\label{div u san}
&\int\rho_s\na^l \diver u \commu{ \na^l ,\frac{1}{\rho_s} }  \diver u\ls\norm{\na^l\diver u}_{L^2}\norm{\commu{ \na^l ,\frac{1}{\rho_s} } \diver u}_{L^2}\nonumber\\
&\quad\ls\norm{\na^{l+1} u}_{L^2}\left(\norm{\na \left(\frac{1}{\rho_s}\right)}_{L^\infty}\norm{ \na^{l-1} \diver u }_{L^2}+\norm{ \na^l \left(\frac{1}{\rho_s}\right) }_{L^6}\norm{\diver u}_{L^3}\right )\nonumber\\
&\quad\ls \norm{\na^{l+1} u}_{L^2}\left( \norm{ \na^{l} u }_{L^2}+ \norm{\na u }_{H^1}\right )
\ls \norm{\na^{l+1} u}_{L^2}\left( \norm{ \na^{l } u }_{L^2}+ \norm{\na u }_{L^2}\right ).
\end{align}
For the third term, we employ the product estimates \eqref{product estimate} to obtain: for $l=1$,
\begin{align}\label{div u si}
&\int\na \rho_s\cdot\na u \na\left(\frac{1}{\rho_s}\diver u \right)\ls\norm{\na u}_{L^2}\norm{ \na \left(\frac{1}{\rho_s}\diver u \right)}_{L^2}\nonumber\\
&\quad\ls\norm{\na u}_{L^2}\left(\norm{\frac{1}{\rho_s}}_{L^\infty}\norm{\na \diver u}_{L^2}+\norm{\na \left(\frac{1}{\rho_s}\right)}_{L^3}\norm{\diver u}_{L^6}\right)\nonumber\\
&\quad\ls \norm{\na^2 u}_{L^2}\norm{\na u }_{L^2};
\end{align}
for $l\ge2$,
\begin{align}\label{div u wu}
&\int\na \rho_s\cdot\na^lu\cdot\na^l \left(\frac{1}{\rho_s}\diver u \right)\ls\norm{\na^lu}_{L^2}\norm{ \na^l \left(\frac{1}{\rho_s}\diver u \right)}_{L^2}\nonumber\\
&\quad\ls\norm{\na^lu}_{L^2}\left(\norm{\frac{1}{\rho_s}}_{L^\infty}\norm{\na^l\diver u}_{L^2}+\norm{\na^l\left(\frac{1}{\rho_s}\right)}_{L^6}\norm{\diver u}_{L^3}\right)\nonumber\\
&\quad\ls\norm{\na^lu}_{L^2}\left(\norm{\na^{l+1}u}_{L^2}+\norm{\na u}_{L^2}\right).
\end{align}
Similarly, for the fourth term, we have: for $l=1$,
\begin{align}\label{div u liu}
&\int\rho_s\na u\cdot \na \left(\na\left(\frac{1}{\rho_s} \right) \diver u \right)\ls\norm{\na u}_{L^2}\norm{\na \left(\na\left(\frac{1}{\rho_s} \right) \diver u \right)}_{L^2}\nonumber\\
&\quad\ls\norm{\na u}_{L^2}\left(\norm{\na\left(\frac{1}{\rho_s}\right)}_{L^\infty}\norm{\na \diver u}_{L^2}+\norm{\na^2 \left(\frac{1}{\rho_s}\right)}_{L^3}\norm{\diver u}_{L^6}\right)\nonumber\\
&\quad\ls \norm{\na^2 u}_{L^2}\norm{\na u }_{L^2};
\end{align}
for $l\ge2$,
\begin{align}\label{div u qi}
&\int\rho_s\na^lu\cdot \na^l \left(\na\left(\frac{1}{\rho_s} \right) \diver u \right)
\ls \norm{\na^lu}_{L^2}\norm{ \na^{l} \left(\na\left(\frac{1}{\rho_s} \right) \diver u \right)}_{L^2}\nonumber\\
&\quad\ls \norm{\na^lu}_{L^2} \left(\norm{\na\left(\frac{1}{\rho_s}\right)}_{L^\infty}\norm{\na^{l}\diver u}_{L^2}+\norm{\na^{l+1}\left(\frac{1}{\rho_s}\right)}_{L^2}\norm{\diver u}_{L^\infty}\right)\nonumber\\
&\quad\ls\norm{\na^lu}_{L^2}\left(\norm{\na^{l+1}u}_{L^2} +\norm{\na u}_{H^2}\right)\ls\norm{\na^lu}_{L^2}\left(\norm{\na^{l+1}u}_{L^2}+\norm{\na u}_{L^2}\right).
\end{align}
In light of \eqref{div u er}--\eqref{div u qi} and Cauchy's inequality, we deduce that the last three terms in \eqref{div u yi} are bounded by that for any small constant $\eta>0$,
$$\eta\norm{\na^{l+1}u}_{L^2}^2+C_\eta\left( \norm{ \na^{l }u }_{L^2}^2+ \norm{\na u }_{L^2}^2\right ).$$

We now compute the term
\begin{align}\label{Delta u yi}
-&\int\rho_s\na^lu\cdot \na^l\left(\frac{1}{\rho_s}   \De u \right)=-\int\rho_s\na^lu\cdot \na^l \left(\diver \left(\frac{1}{\rho_s}\na u \right)-\na\left(\frac{1}{\rho_s}\right)\cdot\na u \right)\nonumber\\
&=\int\na\left(\rho_s\na^lu\right)\cdot\na^l \left(\frac{1}{\rho_s}\na u \right)+\rho_s\na^lu\cdot \na^l \left(\na\left(\frac{1}{\rho_s}\right)\cdot\na u \right)\nonumber\\
&=\int|\na^{l+1}u |^2+\int \rho_s\na^{l+1}u\cdot\commu{ \na^l ,\frac{1}{\rho_s}}\na u \nonumber\\
&\quad+\int\na \rho_s\cdot\na^lu\cdot\na^l \left(\frac{1}{\rho_s}\na u\right)+\int\rho_s\na^lu\cdot \na^l \left(\na\left(\frac{1}{\rho_s}\right)\cdot\na u \right).
\end{align}
So as for \eqref{div u yi}, the last three terms in \eqref{Delta u yi} are also bounded by
$$\eta\norm{\na^{l+1}u}_{L^2}^2+C_\eta\left( \norm{ \na^{l }u }_{L^2}^2+ \norm{\na u }_{L^2}^2\right ).$$
Hence, we may conclude that
\begin{align}\label{zuosan}
&-\int\rho_s\na^lu\cdot \na^l\left(\frac{1}{\rho_s}\left(\mu\Delta u+(\mu+\mu')\na\diver u\right)\right)\nonumber
\\&\quad\gt\norm{\na^{l+1}u}_{L^2}^2-C \left( \norm{ \na^{l }u }_{L^2}^2+ \norm{\na u }_{L^2}^2\right ).
\end{align}

We may simply bound the Poisson term as
\begin{align}
- \int \rho_s\na^lu\cdot\na^l\na\Phi \ls \norm{\na^lu}_{L^2}\norm{\na^l\na\Phi}_{L^2}\ls \norm{\na^lu}_{L^2}\left(\norm{\varrho}_{L^2}+\norm{\na^{l-1}\varrho}_{L^2}\right).
\end{align}
Then, we obtain
\begin{align}\label{zuoer}
- \int \rho_s\na^lu\cdot\na^l\na\Phi\gt -\norm{\na^lu}_{L^2}\left(\norm{\varrho}_{L^2}+\norm{\na^{l-1}\varrho}_{L^2}\right).
\end{align}

Now, we estimate the nonlinear terms on the right-hand side of \eqref{yi u}. By the commutator notation \eqref{commutator}, we have
\begin{align}\label{yi youyi}
&-\int h'(\rho_s)\na^l\varrho\na^l\diver(\varrho u)=-\int h'(\rho_s)\na^l\varrho\na^l\left(u\cdot\na\varrho+\varrho\diver u\right)\nonumber\\
&\quad=-\int h'(\rho_s)u\cdot\na\na^l\varrho\na^l\varrho-\int h'(\rho_s)\na^l\varrho\left[\na^l,u\right]\cdot\na\varrho-\int h'(\rho_s)\na^l\varrho\na^l\left(\varrho\diver u\right).
\end{align}
By the integration by parts, we obtain
\begin{align}\label{er youyi}
&-\int h'(\rho_s)u\cdot\na\na^l\varrho\na^l\varrho=-\frac12\int h'(\rho_s)u\cdot\na|\na^l\varrho|^2=\frac12\int\diver(h'(\rho_s)u)|\na^l\varrho|^2\nonumber\\
&\quad\ls\left(\norm{u}_{L^\infty}+\norm{\diver u}_{L^\infty}\right)\norm{\na^l\varrho}_{L^2}^2\ls\de\norm{\na^l\varrho}_{L^2}^2.
\end{align}
By the commutator estimates \eqref{commutator estimate}, we obtain
\begin{align}\label{san youyi}
&-\int h'(\rho_s)\na^l\varrho\left[\na^l,u\right]\cdot\na\varrho\ls\norm{\na^l\varrho}_{L^2}\norm{\left[\na^l,u\right]\cdot\na\varrho}_{L^2}\nonumber\\
&\quad\ls\norm{\na^l\varrho}_{L^2}\left(\norm{\na u}_{L^\infty}\norm{\na^l\varrho}_{L^2}+\norm{\na^lu}_{L^6}\norm{\na\varrho}_{L^3}\right)\nonumber\\
&\quad\ls\de\left(\norm{\na^l\varrho}_{L^2}^2+\norm{\na^{l+1}u}_{L^2}^2\right).
\end{align}
By the product estimates  \eqref{product estimate}, we obtain
\begin{align}\label{si youyi}
&-\int h'(\rho_s)\na^l\varrho\na^l\left(\varrho\diver u\right)\ls\norm{\na^l\varrho}_{L^2}\norm{\na^l\left(\varrho\diver u\right)}_{L^2}\nonumber\\
&\quad\ls\norm{\na^l\varrho}_{L^2}\left(\norm{\varrho}_{L^\infty}\norm{\na^l\diver u}_{L^2}+\norm{\na^l\varrho}_{L^2}\norm{\diver u}_{L^\infty}\right)\nonumber\\
&\quad\ls\de\left(\norm{\na^l\varrho}_{L^2}^2+\norm{\na^{l+1}u}_{L^2}^2\right).
\end{align}
Hence, we deduce from \eqref{yi youyi}--\eqref{si youyi} that
\begin{align}\label{youyi}
-\int h'(\rho_s)\na^l\varrho\na^l\diver(\varrho u)\ls\de\left(\norm{\na^l\varrho}_{L^2}^2+\norm{\na^{l+1}u}_{L^2}^2\right).
\end{align}

We use the product estimates  \eqref{product estimate} to obtain
\begin{align}\label{youer}
&-\int\rho_s\na^lu\cdot\na^l(u\cdot\na u)\ls\norm{\na^lu}_{L^2}\norm{\na^l\left(u\cdot\na u\right)}_{L^2}\nonumber\\
&\quad\ls\norm{\na^lu}_{L^2}\left(\norm{u}_{L^\infty}\norm{\na^l\na u}_{L^2}+\norm{\na^lu}_{L^2}\norm{\na u}_{L^\infty}\right)\nonumber\\
&\quad\ls\de\left(\norm{\na^lu}_{L^2}^2+\norm{\na^{l+1}u}_{L^2}^2\right).
\end{align}

By the integration by parts, we deduce
\begin{align}\label{yousan}
&-\int\rho_s\na^lu\cdot\na^l\na\mathcal{R} =\int\na\rho_s\cdot\na^lu \na^{l}\mathcal{R}+\int\rho_s\na^{l}{\rm div }u \na^{l}\mathcal{R}\nonumber\\
&\quad\ls\norm{\na\rho_s}_{L^3}\norm{\na^lu}_{L^6}\norm{\na^{l}\mathcal{R}}_{L^{2}}+\norm{\na^{l+1}u}_{L^2}\norm{\na^{l}\mathcal{R}}_{L^2}\nonumber\\
&\quad\ls\de\left(\norm{\na\varrho}_{L^2}^2+\norm{\na^l\varrho}_{L^2}^2+\norm{\na^{l+1}u}_{L^2}^2\right).
\end{align}
Here we have used the nonlinear estimates \eqref{A31} of $\mathcal{R}$ stated in Lemma \ref{A3}.

For the last term, we integrate by parts to have
\begin{align}
&\int\rho_s\na^lu\cdot \na^l\left(\left(\frac{1}{\varrho+\rho_s}-\frac{1}{\rho_s}\right)\na^2 u\right)\nonumber
\\&\quad=-\int\na\rho_s\na^{l}u\cdot \na^{l-1}\left(\left(\frac{1}{\varrho+\rho_s}-\frac{1}{\rho_s}\right)\na^2 u \right)-\int\rho_s\na^{l+1}u\cdot \na^{l-1}\left(\left(\frac{1}{\varrho+\rho_s}-\frac{1}{\rho_s}\right)\na^2 u \right)\nonumber\\
&\quad\ls\norm{\na^{l}u}_{L^6}\norm{\na^{l-1}\left(\left(\frac{1}{\varrho+\rho_s}-\frac{1}{\rho_s}\right)\na^2 u\right)}_{L^{6/5}}\nonumber
\\&\qquad+\norm{\na^{l+1} u}_{L^2}\norm{\na^{l-1}\left(\left(\frac{1}{\varrho+\rho_s}-\frac{1}{\rho_s}\right)\na^2 u\right)}_{L^{2}} \nonumber\\
&\quad\ls\norm{\na^{l+1}u}_{L^2}\left(\norm{\frac{1}{\varrho+\rho_s}-\frac{1}{\rho_s}}_{L^3}\norm{\na^{l+1} u}_{L^2}+\norm{\na^{l-1}\left(\frac{1}{\varrho+\rho_s}-\frac{1}{\rho_s}\right)}_{L^2}\norm{\na^2 u}_{L^3}\right)\nonumber\\
&\qquad+\norm{\na^{l+1}u}_{L^2}\left(\norm{\frac{1}{\varrho+\rho_s}-\frac{1}{\rho_s}}_{L^\infty}\norm{\na^{l+1} u}_{L^2}+\norm{\na^{l-1}\left(\frac{1}{\varrho+\rho_s}-\frac{1}{\rho_s}\right)}_{L^6}\norm{\na^2 u}_{L^3}\right)\nonumber\\
&\quad\ls \de\left(\norm{\na^{l+1} u}_{L^2}^2+\norm{\na^{l-1}\varrho}_{L^2}^2+\norm{\na^l\varrho}_{L^2}^2\right) .
\end{align}
Hence, we deduce that
\begin{align}\label{qi yousi}
\int&\rho_s\na^lu\cdot \na^l\left(\left(\frac{1}{\varrho+\rho_s}-\frac{1}{\rho_s}\right)\left(\mu\Delta u+(\mu+\mu')\na\diver u\right)\right)\nonumber\\
&\ls \de\left(\norm{\na^{l+1} u}_{L^2}^2+\norm{\na^{l-1}\varrho}_{L^2}^2+\norm{\na^l\varrho}_{L^2}^2\right).
\end{align}

Plugging the estimates \eqref{zuoyi}, \eqref{zuosan}--\eqref{zuoer} and \eqref{youyi}--\eqref{qi yousi} into \eqref{yi u}, we then obtain
\begin{align}
&\frac{d}{dt}\left(\int h'(\rho_s)|\na^l\varrho|^2+\rho_s|\na^lu|^2\right)+\norm{\na^{l+1}u}_{L^2}^2\nonumber\\
&\quad\ls \delta\left( \norm{\na^{l+1} u}_{L^2}^2+\norm{\na^lu}_{L^2}^2+\norm{ \na^{l }\varrho }_{L^2}^2+\norm{ \na^{l-1 }\varrho }_{L^2}^2+ \norm{\na\varrho }_{L^2}^2+ \norm{\varrho }_{L^2}^2\right) \nonumber\\
&\qquad+  \norm{ \na^{l }u }_{L^2}^2+ \norm{\na u }_{L^2}^2 +\left( \norm{ \na^{l }u }_{L^2}+ \norm{\na u }_{L^2}\right )\left( \norm{ \na^{l }\varrho }_{L^2}+ \norm{\varrho }_{L^2}\right ).
\end{align}
By the interpolation and Young's inequality, since $\de$ is small, we deduce \eqref{energy 1}.
\end{proof}

Now we recover the dissipation for $\varrho$, and we first deal with $\varrho$ itself.

\begin{lemma} \label{En le 3}
It holds that
\begin{align}\label{energy 2}
\frac{d}{dt}\int u\cdot\na(h'(\rho_s)\varrho)+\norm{\varrho}_{L^2}^2+\norm{\na\varrho}_{L^2}^2\ls\norm{\na^2u}_{L^2}^2+\norm{\na u}_{L^2}^2.
\end{align}
\end{lemma}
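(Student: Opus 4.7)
\textbf{Proof proposal for Lemma \ref{En le 3}.} The strategy is the standard ``cross-multiplier trick'' used to recover dissipation for the density in compressible Navier--Stokes type systems, adapted to the non-flat steady state. Compute
\[
\frac{d}{dt}\int u\cdot\nabla(h'(\rho_s)\varrho)=\int \partial_t u\cdot\nabla(h'(\rho_s)\varrho)+\int u\cdot\nabla(h'(\rho_s)\partial_t\varrho),
\]
using that $\rho_s$ is time-independent, and substitute the momentum equation from \eqref{NSP per} into the first integral and the continuity equation $\partial_t\varrho=-\diver(\rho_s u)-\diver(\varrho u)$ into the second.

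The two terms that will supply the required dissipation are the pressure-type term and the Poisson term. After substitution, the pressure term yields $\int|\nabla(h'(\rho_s)\varrho)|^{2}$ on the good side; expanding $\nabla(h'(\rho_s)\varrho)=h'(\rho_s)\nabla\varrho+h''(\rho_s)\varrho\nabla\rho_s$, integrating by parts on the cross product $h'(\rho_s)h''(\rho_s)\nabla\rho_s\cdot\nabla(\varrho^{2})/2$, and using the pointwise lower bound $h'(\rho_s)\ge c>0$ from \eqref{b1}, this contributes $c\|\nabla\varrho\|_{L^{2}}^{2}$ modulo a term of the form $\int K(\rho_s,\nabla\rho_s,\nabla^{2}\rho_s)\varrho^{2}$ which is controlled (not necessarily small) by $\|\varrho\|_{L^{2}}^{2}$. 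The Poisson contribution, on the other hand, is
\[
\int\nabla\Phi\cdot\nabla(h'(\rho_s)\varrho)=-\int\Delta\Phi\,h'(\rho_s)\varrho=-\int h'(\rho_s)\varrho^{2},
\]
which, once moved to the left-hand side, supplies $\int h'(\rho_s)\varrho^{2}\gtrsim\|\varrho\|_{L^{2}}^{2}$, with a coefficient large enough (because $h'(\rho_s)$ has a positive pointwise lower bound depending only on $\inf b$) to dominate the bad $\int K\varrho^{2}$ from the pressure-term expansion; this is where the cleanness of the Poisson computation is essential.

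All other terms must be absorbed into the right-hand side of \eqref{energy 2}. The viscous contribution $\int\frac{1}{\rho_s}(\mu\Delta u+(\mu+\mu')\nabla\diver u)\cdot\nabla(h'(\rho_s)\varrho)$ is estimated by H\"older and Young with a small parameter $\eta$, yielding $\eta(\|\nabla\varrho\|_{L^{2}}^{2}+\|\varrho\|_{L^{2}}^{2})+C_\eta\|\nabla^{2}u\|_{L^{2}}^{2}$; the $\eta$-terms will be absorbed by the positive contributions above. The second time-derivative term, after integration by parts, becomes $\int\diver u\,h'(\rho_s)\bigl(\rho_s\diver u+u\cdot\nabla\rho_s+\diver(\varrho u)\bigr)$, which is bounded by $\|\nabla u\|_{L^{2}}^{2}$ plus a cubic piece controlled by $\delta(\|\nabla u\|_{L^{2}}^{2}+\|\varrho\|_{L^{2}}^{2})$ via the a priori assumption \eqref{a priori}. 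The genuinely nonlinear terms from $-u\cdot\nabla u$, $\nabla\mathcal{R}$ and the variable-viscosity remainder $(1/(\varrho+\rho_s)-1/\rho_s)\nabla^{2}u$, tested against $\nabla(h'(\rho_s)\varrho)$, are all at least quadratic in $(\varrho,u)$ and so, by the smallness \eqref{a priori} together with \eqref{A31}, contribute $\delta$ times the full dissipation; they are absorbed.

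The main obstacle is the non-flatness of $\rho_s$: since $\nabla\rho_s$ is \emph{not} assumed small in this theorem, every time an integration by parts puts a derivative on $\rho_s$ one picks up a term that is controlled only by the $H^{k}$-bound \eqref{b2}, not by $\delta$. The delicate point is that the only such term of the form ``$C\|\varrho\|_{L^{2}}^{2}$'' with a potentially large constant appears after expanding $|\nabla(h'(\rho_s)\varrho)|^{2}$, and must be dominated by the Poisson-generated $\int h'(\rho_s)\varrho^{2}$; this works precisely because $h'$ and its lower bound are determined by $\inf b$ and $\sup b$ via \eqref{b1}, independently of $\|\nabla\rho_s\|_{H^{k}}$. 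Once the positive terms dominate and $\eta,\delta$ are chosen small, the claimed inequality follows.
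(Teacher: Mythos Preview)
Your overall strategy is the same as the paper's --- test the momentum equation against $\nabla(h'(\rho_s)\varrho)$, harvest $\|\nabla(h'(\rho_s)\varrho)\|_{L^2}^2$ from the pressure term and $\int h'(\rho_s)\varrho^2$ from the Poisson term, and control everything else by $\|\nabla u\|_{H^1}^2$ plus $\delta$-absorbable pieces. That part is fine.

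The gap is in your ``delicate point.'' You expand $|\nabla(h'(\rho_s)\varrho)|^2$, produce a term $\int K(\rho_s,\nabla\rho_s,\nabla^2\rho_s)\varrho^2$ with $K$ of indefinite sign, and then assert that the Poisson contribution $\int h'(\rho_s)\varrho^2$ dominates it ``because $h'$ and its lower bound are determined by $\inf b$ and $\sup b$ \dots\ independently of $\|\nabla\rho_s\|_{H^k}$.'' This reasoning is backwards: the lower bound of $h'(\rho_s)$ is indeed independent of $\nabla\rho_s$, but $K$ is \emph{not} --- it involves $|\nabla\rho_s|^2$ and $\nabla^2\rho_s$, which under the large-doping hypothesis of Theorem~\ref{existence} may be arbitrarily large. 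There is no reason the pointwise inequality $h'(\rho_s)+K\ge c>0$ should hold, so the cancellation you rely on can fail.

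The paper sidesteps this entirely by \emph{not} expanding. It keeps the two good terms together and observes the equivalence
\[
\|\varrho\|_{L^2}^2+\|\nabla\varrho\|_{L^2}^2\ \lesssim\ \int h'(\rho_s)\varrho^2+\|\nabla(h'(\rho_s)\varrho)\|_{L^2}^2,
\]
which follows immediately by writing $\varrho=(h'(\rho_s))^{-1}\cdot\bigl(h'(\rho_s)\varrho\bigr)$ and using only that $h'(\rho_s)$ is bounded above and below and that $\nabla\rho_s\in L^\infty$. The equivalence constant may be large (it depends on $\|\nabla\rho_s\|_{L^\infty}$), but both sides are nonnegative, so no sign competition arises; this large constant simply gets absorbed into the implicit constant of \eqref{energy 2}. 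Replace your expansion argument with this equivalence and the proof goes through.
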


\begin{proof}
Multiplying the second equation in $\eqref{NSP per}$ by $\na(h'(\rho_s)\varrho)$ and then integrating over $\r3$, by Cauchy's inequality and the nonlinear estimates \eqref{A31}, we obtain
\begin{align}\label{yi 0jie varrho}
&\int\pa_tu\cdot\na(h'(\rho_s)\varrho)+\norm{\na(h'(\rho_s)\varrho)}_{L^2}^2-\int\na\Phi\cdot\na(h'(\rho_s)\varrho)\nonumber\\
&\quad\ls\left(\norm{\na^2u}_{L^2}+\norm{u\cdot\na u}_{L^2}+\norm{\na\mathcal{R}}_{L^2}+\norm{\left(\frac{1}{\varrho+\rho_s}-\frac{1}{\rho_s}\right)\na^2u}_{L^2}\right)\norm{\na(h'(\rho_s)\varrho)}_{L^2}\nonumber\\
&\quad\ls \norm{\na^2u}_{L^2}^2+\delta\norm{\na\varrho}_{L^2}^2.
\end{align}

By the first equation in $\eqref{NSP per}$, we integrate by parts for both $t$- and $x$-variables to have
\begin{align}\label{san 0jie varrho}
&-\int\pa_tu\cdot\na(h'(\rho_s)\varrho)=-\frac{d}{dt}\int u\cdot\na(h'(\rho_s)\varrho)+\int u\cdot\na(h'(\rho_s)\pa_t\varrho)\nonumber\\
&\quad=-\frac{d}{dt}\int u\cdot\na(h'(\rho_s)\varrho)+\int h'(\rho_s)\diver u\left(u\cdot \na\varrho+ \varrho\diver  u+\rho_s\diver u+u\cdot\na \rho_s\right)\nonumber\\
&\quad\le -\frac{d}{dt}\int u\cdot\na(h'(\rho_s)\varrho)+C\norm{\na u}_{L^2}^2.
\end{align}
By the integration by parts and the Poisson equation in $\eqref{NSP per}$, we obtain
\begin{align}\label{er 0jie varrho}
-\int\na\Phi\cdot\na(h'(\rho_s)\varrho)=\int\De\Phi h'(\rho_s)\varrho=\int h'(\rho_s)\varrho^2.
\end{align}
Plugging \eqref{san 0jie varrho}--\eqref{er 0jie varrho} into \eqref{yi 0jie varrho}, since $\de$ is small, we deduce \eqref{energy 2} by noticing that
\begin{align}
\norm{\varrho}_{L^2}^2+\norm{\na\varrho}_{L^2}^2\ls\int h'(\rho_s)\varrho^2+\norm{\na(h'(\rho_s)\varrho)}_{L^2}^2.
\end{align}
We thus conclude the lemma.
\end{proof}

We next derive the dissipation estimates for the derivatives of $\varrho$.
\begin{lemma} \label{En le 4}
For $l=1,\dots,k-1$, we have
\begin{align}\label{energy 3}
\frac{d}{dt}\int
\na^{l}u\cdot \na\na^l\varrho+\norm{\na^l\varrho}_{L^2}^2+\norm{\na^{l+1}\varrho}_{L^2}^2\ls\norm{\na^{l+2}u}_{L^2}^2+\norm{\na u}_{L^2}^2+\norm{\varrho}_{L^2}^2.
\end{align}
\end{lemma}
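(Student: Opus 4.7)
The approach parallels Lemma \ref{En le 3}: apply $\na^l$ to the second equation of \eqref{NSP per} and take the $L^2$ inner product with $\na\na^l\varrho$. This test function is natural because the pressure gradient $\na(h'(\rho_s)\varrho)$ produces the positive term $\int h'(\rho_s)|\na^{l+1}\varrho|^2 \gt \norm{\na^{l+1}\varrho}_{L^2}^2$ (after expanding via a commutator, using \eqref{b1}), while the Poisson contribution $-\na\Phi$, after one integration by parts and the identity $\De\Phi=\varrho$, yields $\norm{\na^l\varrho}_{L^2}^2$; together these are precisely the two pieces of dissipation on the left of \eqref{energy 3}.

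The $\pa_tu$ term contributes $\frac{d}{dt}\int\na^lu\cdot\na\na^l\varrho - \int\na^lu\cdot\na\na^l\pa_t\varrho$; the second piece, via the continuity equation $\pa_t\varrho=-\diver(\rho_su+\varrho u)$ and integration by parts, becomes $-\int\na^l\diver u\cdot\na^l\diver(\rho_su+\varrho u)$, whose principal part is dominated by $\norm{\na^{l+1}u}_{L^2}^2$ using \eqref{b2} and whose remainders are $\de$-small thanks to \eqref{a priori} and the product estimates \eqref{product estimate}. Expanding the pressure inner product as
\[\int\na^{l+1}(h'(\rho_s)\varrho)\cdot\na^{l+1}\varrho = \int h'(\rho_s)|\na^{l+1}\varrho|^2 + \int\commu{\na^{l+1},h'(\rho_s)}\varrho\cdot\na^{l+1}\varrho,\]
one controls the commutator by \eqref{commutator estimate} together with the coefficient bounds \eqref{b2}, producing a factor of the form $\norm{\na^l\varrho}_{L^2}+\norm{\na\varrho}_{H^1}$; after Young's inequality and the interpolation $\norm{\na^j\varrho}_{L^2}\ls\eta\norm{\na^{l+1}\varrho}_{L^2}+C_\eta\norm{\varrho}_{L^2}$ from Lemma \ref{A1} with $\eta$ sufficiently small, all intermediate-order norms are absorbed into the dissipation $\norm{\na^{l+1}\varrho}_{L^2}^2$ on the left and the $\norm{\varrho}_{L^2}^2$ on the right. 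The variable-coefficient viscosity contribution $-\int\na^l(\rho_s^{-1}(\mu\De u+(\mu+\mu')\na\diver u))\cdot\na\na^l\varrho$ is bounded by $(\norm{\na^{l+2}u}_{L^2}+\norm{\na u}_{L^2})\norm{\na^{l+1}\varrho}_{L^2}$ via \eqref{product estimate}, again absorbable by Young, and the nonlinear right-hand side of the momentum equation—convection, $\na\mathcal{R}$, and the viscosity correction—contributes only $\de$-small residues using \eqref{a priori} and the estimate \eqref{A31} on $\mathcal{R}$ from Lemma \ref{A3}.

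The main obstacle, exactly as emphasized prior to Lemma \ref{En le 2}, is that in the large-doping regime $h'(\rho_s)$ is not close to a constant, so the commutator $\commu{\na^{l+1},h'(\rho_s)}\varrho$ from the pressure gradient cannot be dismissed as a small perturbation. The key is that \eqref{b2} keeps every factor involving derivatives of $\rho_s$ uniformly bounded, and a two-stage use of Young's inequality followed by the interpolation from Lemma \ref{A1} converts every such error either into a small multiple of $\norm{\na^{l+1}\varrho}_{L^2}^2$ (absorbed into the dissipation on the left) or into one of the permitted RHS terms $\norm{\na^{l+2}u}_{L^2}^2$, $\norm{\na u}_{L^2}^2$, $\norm{\varrho}_{L^2}^2$; assembling all the estimates and exploiting smallness of $\de$ then closes \eqref{energy 3}.
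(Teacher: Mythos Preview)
Your proposal is correct and follows essentially the same route as the paper: test the $\na^l$-differentiated momentum equation against $\na\na^l\varrho$, extract the dissipation $\int h'(\rho_s)|\na^{l+1}\varrho|^2+\norm{\na^l\varrho}_{L^2}^2$ from the pressure and Poisson terms, rewrite the $\pa_tu$ contribution via the continuity equation, and control the commutator $\commu{\na^{l+1},h'(\rho_s)}\varrho$ using \eqref{commutator estimate} and \eqref{b2} followed by interpolation and Young's inequality to reduce to $\norm{\varrho}_{L^2}^2$. The paper records exactly this sequence in \eqref{yi gaojie varrho1}--\eqref{san gaojie varrho}, so no further comparison is needed.
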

\begin{proof}

Applying $\na^l$ to the second equation in $\eqref{NSP per}$, multiplying the resulting identity by $\na^l\na\varrho$ and then integrating over $\r3$, we obtain
\begin{align}\label{yi gaojie varrho1}
&\int\na^l\pa_tu\cdot\na^l\na\varrho+\int h'(\rho_s)|\na^{l+1}\varrho|^2-\int\na^l\na\Phi\cdot\na^l\na\varrho\nonumber\\
&\quad\ls \left( \norm{\left[\na^{l+1},h'(\rho_s)\right]\varrho}_{L^2}+\norm{\na^{l+2}u}_{L^2} +\norm{\na^l\left(u\cdot\na u\right)}_{L^2}\right.\nonumber\\
&\qquad\ \left. +\norm{\na^{l+1}\mathcal{R}}_{L^2}+\norm{\na^l\left(\left(\frac{1}{\varrho+\rho_s}-\frac{1}{\rho_s}\right)\na^2u\right)}_{L^2}\right)\norm{\na^{l+1}\varrho}_{L^2}
.\end{align}
As \eqref{san 0jie varrho}, we deduce
\begin{align}\label{yi gaojie varrho2}
&-\int  \na^{l} \partial_tu\cdot\na\na^l\varrho=-\frac{d}{dt}\int
\na^{l}u\cdot\na\na^l\varrho +\int\na^{l}
 u\cdot \na \na^{l}\pa_t\varrho\nonumber\\
&\quad=-\frac{d}{dt}\int
\na^{l}u\cdot\na\na^l\varrho+\int \na^{l} \diver  u\na^{l}\left(
u \cdot \na\varrho+ \varrho\diver  u+\rho_s\diver u+u\cdot\na \rho_s\right)\nonumber\\
&\quad\le-\frac{d}{dt}\int
\na^{l}u\cdot \na\na^l\varrho+\norm{\na^{l+1}u}_{L^2}^2+\norm{\na^{l}(u \cdot \na\varrho)}_{L^2}^2\nonumber\\
&\qquad+\norm{\na^{l}(\varrho\diver  u)}_{L^2}^2+\norm{\na^{l}(\rho_s\diver  u)}_{L^2}^2+\norm{\na^{l}(u\cdot\na\rho_s)}_{L^2}^2.
\end{align}
As \eqref{er 0jie varrho}, we have
\begin{align}\label{liu gaojie varrho}
-\int\na^l\na\Phi\cdot\na^l\na\varrho=\int\na^l\De\Phi\na^l\varrho=\norm{\na^l\varrho}_{L^2}^2.
\end{align}
Plugging \eqref{yi gaojie varrho2}--\eqref{liu gaojie varrho} into \eqref{yi gaojie varrho1}, and applying the product estimates \eqref{product estimate}, Lemma \ref{A1} and the nonlinear estimates \eqref{A31} as in Lemma \ref{En le 2}, we obtain
\begin{align}\label{yi gaojie varrho}
&\frac{d}{dt}\int
\na^{l}u\cdot\na\na^l\varrho+\int h'(\rho_s)|\na^{l+1}\varrho|^2+\norm{\na^l\varrho}_{L^2}^2\nonumber\\
&\quad\ls \norm{\left[\na^{l+1},h'(\rho_s)\right]\varrho}_{L^2}\norm{\na^{l+1}\varrho}_{L^2}\nonumber\\
&\qquad+\left(\norm{\na^{l+2}u}_{L^2} +\norm{ \na u }_{L^2}+\de\left(\norm{\na\varrho}_{L^2}+\norm{\na^{l+1}\varrho}_{L^2}\right)\right)\norm{\na^{l+1}\varrho}_{L^2}.
\end{align}
By the commutator estimates \eqref{commutator estimate}, we obtain
\begin{align}\label{er gaojie varrho}
\norm{\left[\na^{l+1},h'(\rho_s)\right]\varrho}_{L^2}&\ls\norm{\na h'(\rho_s)}_{L^\infty}\norm{\na^l\varrho}_{L^2}+\norm{\na^{l+1}h'(\rho_s)}_{L^3}\norm{\varrho}_{L^6}\nonumber
\\&\ls\norm{\na^l\varrho}_{L^2}+\norm{\na\varrho}_{L^2}.
\end{align}

Plugging \eqref{er gaojie varrho} into \eqref{yi gaojie varrho}, by Cauchy's inequality and since $\de$ is small, we have
\begin{align}\label{san gaojie varrho}
\frac{d}{dt}&\int\na^{l}u\cdot \na\na^l\varrho+\norm{\na^{l+1}\varrho}_{L^2}^2+\norm{\na^l\varrho}_{L^2}^2\nonumber\\
&\ls\norm{\na^{l+2}u}_{L^2}^2 +\norm{\na u}_{L^2}^2+\norm{\na^l\varrho}_{L^2}^2+\norm{\na\varrho}_{L^2}^2.
\end{align}
By the interpolation and Young's inequality, we further deduce \eqref{energy 3} from \eqref{san gaojie varrho}.
\end{proof}

\subsection{Proof of Theorem \ref{existence}}
In this subsection, we will prove Theorem \ref{existence}.
 Multiplying the estimates \eqref{energy 3} of Lemma \ref{En le 4} with $l=k-1$ by a small fixed constant $\epsilon_1$, and then adding it to the estimates \eqref{energy 2} of Lemma \ref{En le 3}, by the interpolation and Young's inequality, we deduce
\begin{align}\label{new energy 1}
\frac{d}{dt}\left(\int u\cdot\na(h'(\rho_s)\varrho)+\epsilon_1\int\na^{k-1}u\cdot \na^k\varrho\right)+\norm{ \varrho }_{H^k}^2\ls\norm{\na^{k+1} u }_{L^2}^2+\norm{\na u }_{L^2}^2.
\end{align}
Multiplying \eqref{new energy 1} by a small fixed constant $\epsilon_2$ and then adding it to the estimates \eqref{energy 1} of Lemma \ref{En le 2} with $l=k$, we obtain
\begin{align}\label{new energy 11}
&\frac{d}{dt}\left(\int h'(\rho_s)|\na^k\varrho|^2+\rho_s|\na^ku|^2+\eps_2\left(\int u\cdot\na(h'(\rho_s)\varrho)+\epsilon_1\int\na^{k-1}u\cdot \na^k\varrho\right)\right)\nonumber\\
&\quad+\norm{\na^{k+1}u}_{L^2}^2+\norm{ \varrho }_{H^k}^2\ls (\va+\delta)\norm{ \varrho }_{H^k}^2 +C_\varepsilon\left( \norm{ \na^{k}u }_{L^2}^2+ \norm{\na u }_{L^2}^2\right ).
\end{align}
Taking $\va$ sufficiently small in \eqref{new energy 11} and since $\delta$ is small, by the interpolation and Young's inequality, we then have
\begin{align}\label{new energy 2}
&\frac{d}{dt}\left(\int h'(\rho_s)|\na^k\varrho|^2+\rho_s|\na^ku|^2+\eps_2\left(\int u\cdot\na(h'(\rho_s)\varrho)+\epsilon_1\int\na^{k-1}u\cdot \na^k\varrho\right)\right)\nonumber\\
&\quad+\norm{\na u}_{H^k}^{2}+\norm{ \varrho }_{H^k}^2 \ls \norm{\na u }_{L^2}^2.
\end{align}
Multiplying \eqref{new energy 2} by a small fixed constant $\epsilon_3$ and then adding it to the estimates \eqref{energy 0} of Lemma \ref{En le 1}, since $\de$ is small, we deduce
\begin{align}\label{new energy 3}
&\frac{d}{dt}\left(\int\left(h'(\rho_s)\varrho^2+\rho_s|u|^2+|\na\Phi|^2\right)+\eps_3\left(\int h'(\rho_s)|\na^k\varrho|^2+\rho_s|\na^ku|^2\right.\right.\nonumber
\\&\quad\left.\left.+\eps_2\left(\int u\cdot\na(h'(\rho_s)\varrho)+\epsilon_1\int\na^{k-1}u\cdot \na^k\varrho\right)\right)\right) +\norm{\na u}_{H^k}^{2}+\norm{ \varrho }_{H^k}^2 \le 0.
\end{align}
Note that the expression under the time differentiation in \eqref{new energy 3} with properly small $\eps_1$, $\eps_2$ and $\eps_3$ is equivalent to $\norm{ \varrho }_{H^k}^{2}+\norm{  u  }_{H^k}^{2} +\norm{ \na \Phi }_{L^2}^{2}$.
Hence, integrating \eqref{new energy 3} directly in time, we obtain \eqref{energy inequlity}. By a standard continuity argument, we then close the a priori estimates \eqref{a priori} if we assume at initial time that $\norm{ \varrho_0 }_{H^k}+\norm{ u_0 }_{H^k}+\norm{\nabla\Phi_0}_{L^2}$ is sufficiently small. The global solution then follows by a standard continuity argument combined with the local existence of solutions. The proof of Theorem \ref{existence} is thus completed.\hfill$\Box$

\section{Time decay with small doping profile}\label{section4}
In this section, we will derive the time decay rates of the solution to \eqref{NSP} towards the steady state. For this, we need to require that the doping profile is of small variation, that is, $b(x)$ is near the constant $\bar b$. Then according to Proposition \ref{prop}, $\rho_s(x)$ is near $\bar\rho$ with $\bar\rho=\bar b$. Owing to this fact, we may rewrite \eqref{NSP per} as:
\begin{align} \label{NSP per2}
\begin{cases}
\displaystyle\pa_t\varrho+\bar\rho \diver u=-\diver((\varrho+\rho_s-\bar\rho)  u),   \\
\displaystyle\pa_tu+h'(\bar\rho)\na\varrho-\frac{1}{\bar\rho}\left(\mu\Delta u+(\mu+\mu')\na\diver u\right)-\na\Phi\\
\displaystyle\ \ =\left(\frac{1}{\varrho+\rho_s}-\frac{1}{\bar\rho}\right)\left(\mu\Delta u+(\mu+\mu')\na\diver u\right)-u\cdot\na u-\na \mathcal{R}-\na((h'(\rho_s)-h'(\bar\rho))\varrho), \\
\Delta\Phi=\varrho, \\
(\varrho,u)\mid_{t=0}=(\varrho_0, u_0).
\end{cases}
\end{align}

\subsection{Energy estimates}

In this subsection, under the assumptions of Theorem \ref{decay}, we will derive the further energy estimates for the global solutions to the Navier-Stokes-Poisson equations \eqref{NSP per2} obtained in Theorem \ref{existence}. By Theorem \ref{existence}, the assumptions of Theorem \ref{decay} and Proposition \ref{prop}, we have that
\begin{align}\label{a priori2}
\norm{(\varrho,u)(t)}_{H^k}+\norm{\na\Phi(t)}_{L^2} +\norm{\rho_s-\bar\rho}_{H^{k+1}}\le \de
\end{align}
for some small constant $\de>0$.

We first derive the energy estimates for the derivatives of the solution of order $l$ with $l\in [0,k]$. Note that now $l$ is not required to be an integer.
\begin{lemma}\label{En le 1decay}
For $0\le l\le k$, we have
\begin{align}\label{energy 1decay}
&\frac{d}{dt}\int\left(h'(\bar\rho)|\na^l\varrho|^2+\bar\rho|\na^lu|^2+|\na^l\na\Phi|^2\right)+\norm{\na^{l+1}u}_{L^2}^2\nonumber\\
&\quad \ls \de\left( \norm{ \na^{l}\varrho }_{L^2}^2+\norm{\na^l\na\Phi }_{L^2}^2+\norm{ \na^{l}u }_{L^2}^2+\norm{\varrho}_{L^\infty}^2
+\norm{u}_{L^\infty}^2+\norm{\na u}_{L^3}^2+\norm{\na^2u}_{L^3}^2\right).
\end{align}
\end{lemma}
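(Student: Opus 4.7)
The plan is to apply $\na^l$ to the first two equations of \eqref{NSP per2}, test against $h'(\bar\rho)\na^l\varrho$ and $\bar\rho\na^lu$ respectively, and integrate over $\r3$. The key simplification compared with Lemma \ref{En le 2} is that the coefficients on the left of \eqref{NSP per2} are now the constants $h'(\bar\rho)$ and $1/\bar\rho$, so no commutators with $\rho_s$ arise from the principal linear part: the cross terms
\begin{align*}
\int h'(\bar\rho)\bar\rho\,\na^l\varrho\,\na^l\diver u+\int \bar\rho h'(\bar\rho)\,\na^lu\cdot\na^l\na\varrho=0
\end{align*}
cancel by integration by parts, and the viscosity term produces the dissipation $\mu\norm{\na^{l+1}u}_{L^2}^2+(\mu+\mu')\norm{\na^l\diver u}_{L^2}^2$.

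For the Poisson contribution on the left, I would integrate by parts once to rewrite $-\bar\rho\int\na^lu\cdot\na^l\na\Phi=\bar\rho\int\na^l\diver u\,\na^l\Phi$, then substitute $\bar\rho\diver u=-\pa_t\varrho-\diver((\varrho+\rho_s-\bar\rho)u)$ from the first equation of \eqref{NSP per2} and use $\De\Phi=\varrho$ to extract
\begin{align*}
-\bar\rho\int\na^l u\cdot\na^l\na\Phi=\frac12\frac{d}{dt}\norm{\na^l\na\Phi}_{L^2}^2+\int\na^l\diver((\varrho+\rho_s-\bar\rho)u)\,\na^l\Phi,
\end{align*}
which is the precise analogue of \eqref{san 0jieu} with fractional derivatives. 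The error term is absorbed by $\de(\norm{\na^l\na\Phi}_{L^2}^2+\text{l.o.t.})$ using the a priori bound \eqref{a priori2} together with $\norm{\rho_s-\bar\rho}_{H^{k+1}}\ls\de$ from Proposition \ref{prop}.

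For the nonlinear right-hand side, the strategy is to invoke the fractional Kato-Ponce product and commutator estimates presumably collected in the Appendix (Lemma \ref{A1}, the product estimate \eqref{product estimate}, and the nonlinear composition estimates \eqref{A31}). For a generic term of the form $\na^l(fg)$ one splits
\begin{align*}
\norm{\na^l(fg)}_{L^2}\ls\norm{f}_{L^\infty}\norm{\na^l g}_{L^2}+\norm{\na^l f}_{L^{p_1}}\norm{g}_{L^{p_2}},
\end{align*}
and chooses exponents so that every factor is either controlled by $\de$ via \eqref{a priori2} or reduces to one of the $L^\infty$/$L^3$ norms on the right of \eqref{energy 1decay}. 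In this way, $\diver((\varrho+\rho_s-\bar\rho)u)$ and $u\cdot\na u$ give rise to the $\norm{\varrho}_{L^\infty}$, $\norm{u}_{L^\infty}$ and $\norm{\na u}_{L^3}$ contributions; $\na\mathcal{R}$ is handled by \eqref{A31} since $\mathcal{R}=O(\varrho^2)$; the quasilinear correction $((\varrho+\rho_s)^{-1}-\bar\rho^{-1})(\mu\De u+(\mu+\mu')\na\diver u)$ yields the $\norm{\na^2 u}_{L^3}$ contribution after factoring out a power of $\varrho+\rho_s-\bar\rho$ that is small by \eqref{a priori2}; and the coefficient-variation term $\na((h'(\rho_s)-h'(\bar\rho))\varrho)$ is small because $\norm{\rho_s-\bar\rho}_{H^{k+1}}\ls\de$.

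The main obstacle will be that $l$ is allowed to be non-integer, so one cannot simply expand by the Leibniz rule as was done in Lemma \ref{En le 2}. This forces the use of genuine fractional product and commutator inequalities with a careful tuning of Hölder exponents so that, for each nonlinear term, exactly one factor carries a smallness of order $\de$ while the remaining factor fits into one of the norms on the right-hand side of \eqref{energy 1decay}. Once every nonlinear contribution is dominated by the claimed bound, adding the two resulting identities gives \eqref{energy 1decay}.
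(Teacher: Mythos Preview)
Your proposal is correct and follows essentially the same route as the paper: apply $\na^l$, exploit the constant coefficients $h'(\bar\rho),1/\bar\rho$ so that the principal cross terms cancel without commutators, extract $\tfrac12\frac{d}{dt}\norm{\na^l\na\Phi}_{L^2}^2$ from the Poisson term via the continuity equation, and bound each nonlinear piece by Kato--Ponce product/commutator estimates with H\"older exponents chosen so that one factor carries a $\de$ from \eqref{a priori2}. One small correction: for the $\mathcal{R}$ term you should invoke \eqref{A32} (which yields $\norm{\varrho}_{L^\infty}$) rather than \eqref{A31} (which yields $\norm{\na\varrho}_{L^2}$), since the latter norm does not appear on the right-hand side of \eqref{energy 1decay}; the paper also integrates by parts once before estimating, writing $-\int\bar\rho\na^lu\cdot\na^{l+1}\mathcal{R}=\int\bar\rho\na^l\diver u\,\na^l\mathcal{R}$ and then applying \eqref{A32} to $\na^l\mathcal{R}$.
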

\begin{proof}
Applying $\na^l$ to the first two equations in \eqref{NSP per2} and then multiplying the resulting identities by $h'(\bar\rho)\na^l\varrho$ and $\bar\rho\na^lu$ respectively, summing up them and then integrating over $\r3$, we obtain
\begin{align} \label{yi u2}
&\frac{1}{2}\frac{d}{dt}\int\left(h'(\bar\rho)|\na^l\varrho|^2+\bar\rho|\na^lu|^2\right)+ \int \mu|\na^{l+1}u|^2+(\mu+\mu')|\na^l\diver u|^2- \int \bar\rho \na^lu\cdot\na^l\na\Phi\nonumber\\
&\quad=-\int h'(\bar\rho)\na^l\varrho\na^l\diver((\varrho+\rho_s-\bar\rho) u)-\int\bar\rho\na^lu\cdot\na^l(u\cdot\na u)\nonumber\\
&\qquad-\int\bar\rho\na^lu\cdot\na^l\na\mathcal{R}-\int \bar\rho\na^lu\cdot\na^l \na((h'(\rho_s)-h'(\bar\rho))\varrho) \nonumber\\
&\qquad+\int\bar\rho\na^lu\cdot \na^l\left(\left(\frac{1}{\varrho+\rho_s}-\frac{1}{\bar\rho}\right)\left(\mu\Delta u+(\mu+\mu')\na\diver u\right)\right).
\end{align}

First, we estimate the terms on the right-hand side of \eqref{yi u2}. By the commutator notation \eqref{commutator}, we obtain
\begin{align}\label{yi youyi2}
&-\int h'(\bar\rho)\na^l\varrho\na^l\diver((\varrho+\rho_s-\bar\rho) u)\nonumber\\
&\quad=-\int h'(\bar\rho)\na^l\varrho\na^l\left(u\cdot\na(\varrho+\rho_s-\bar\rho)+(\varrho+\rho_s-\bar\rho)\diver u\right)\nonumber\\
&\quad=-\int h'(\bar\rho)u\cdot\na\na^l\varrho\na^l\varrho-\int h'(\bar\rho)\na^l\varrho\left[\na^l,u\right]\cdot\na\varrho\nonumber\\
&\qquad-\int h'(\bar\rho)\na^l\varrho\na^l\left(u\cdot\na(\rho_s-\bar\rho)\right)-\int h'(\bar\rho)\na^l\varrho\na^l\left((\varrho+\rho_s-\bar\rho)\diver u\right).
\end{align}
By the integration by parts, we obtain
\begin{align}\label{er youyi2}
&-\int h'(\bar\rho)u\cdot\na\na^l\varrho\na^l\varrho=-\frac12\int h'(\bar\rho)u\cdot\na|\na^l\varrho|^2=\frac12\int h'(\bar\rho)\diver u|\na^l\varrho|^2\nonumber\\
&\quad\ls\norm{\diver u}_{L^\infty}\norm{\na^l\varrho}_{L^2}^2\ls\de\norm{\na^l\varrho}_{L^2}^2.
\end{align}
By the commutator estimates \eqref{commutator estimate}, we obtain
\begin{align}\label{san youyi2}
&-\int h'(\bar\rho)\na^l\varrho\left[\na^l,u\right]\cdot\na\varrho\ls\norm{\na^l\varrho}_{L^2}\norm{\left[\na^l,u\right]\cdot\na\varrho}_{L^2}\nonumber\\
&\quad\ls\norm{\na^l\varrho}_{L^2}\left(\norm{\na u}_{L^\infty}\norm{\na^l\varrho}_{L^2}+\norm{\na^lu}_{L^6}\norm{\na\varrho}_{L^3}\right)\nonumber\\
&\quad\ls\de\left(\norm{\na^l\varrho}_{L^2}^2+\norm{\na^{l+1}u}_{L^2}^2\right).
\end{align}
By the product estimates  \eqref{product estimate} and \eqref{a priori2}, we obtain
\begin{align}\label{si youyi21}
&-\int h'(\bar\rho)\na^l\varrho\na^l\left(u\cdot\na(\rho_s-\bar\rho)\right)\ls\norm{\na^l\varrho}_{L^2}\norm{\na^l\left(u\cdot\na(\rho_s-\bar\rho)\right)}_{L^2}\nonumber\\
&\quad\ls\norm{\na^l\varrho}_{L^2}\left(\norm{u}_{L^\infty}\norm{\na^{l+1}(\rho_s-\bar\rho)}_{L^2}+\norm{\na^lu}_{L^6}\norm{\na(\rho_s-\bar\rho)}_{L^3}\right)\nonumber\\
&\quad\ls\de\left(\norm{\na^l\varrho}_{L^2}^2+\norm{\na^{l+1}u}_{L^2}^2+\norm{u}_{L^\infty}^2\right)
\end{align}
and
\begin{align}\label{si youyi22}
&-\int h'(\bar\rho)\na^l\varrho\na^l\left((\varrho+\rho_s-\bar\rho)\diver u\right)\ls\norm{\na^l\varrho}_{L^2}\norm{\na^l\left((\varrho+\rho_s-\bar\rho)\diver u\right)}_{L^2}\nonumber\\
&\quad\ls\norm{\na^l\varrho}_{L^2}\left(\norm{\varrho+\rho_s-\bar\rho}_{L^\infty}\norm{\na^l\diver u}_{L^2}+\norm{\na^l\varrho}_{L^2}\norm{\na u}_{L^\infty}+\norm{\na^l(\rho_s-\bar\rho)}_{L^6}\norm{\na u}_{L^3}\right)\nonumber\\
&\quad\ls\de\left(\norm{\na^l\varrho}_{L^2}^2+\norm{\na^{l+1}u}_{L^2}^2+\norm{\na u}_{L^3}^2\right).
\end{align}
Hence, we may conclude that
\begin{align}\label{youyi decay}
-\int h'(\bar\rho)\na^l\varrho\na^l\diver((\varrho+\rho_s-\bar\rho) u)\ls\de\left(\norm{\na^l\varrho}_{L^2}^2+\norm{\na^{l+1}u}_{L^2}^2+\norm{u}_{L^\infty}^2+\norm{\na u}_{L^3}^2\right).
\end{align}

Next, by H\"older's and Sobolev's inequalities, the product estimates \eqref{product estimate} and the interpolation estimates \eqref{A.1}, we obtain that for $l=0$,
\begin{align}\label{yi youer decay}
-\int\bar\rho\na^{l} u\cdot\na^{l}\left(u\cdot\na u \right)=-\int\bar\rho u\cdot\left(u\cdot\na u \right)\ls\norm{u}_{L^3}\norm{u}_{L^6}\norm{\na u}_{L^2}\ls\de\norm{\na u}_{L^2}^2;
\end{align}
for $l\ge1$,
\begin{align}\label{er youer decay}
-&\int\bar\rho\na^{l} u\cdot\na^{l}\left(u\cdot\na u \right)
=\norm{\na^lu}_{L^6}\norm{\na^l\left(u\cdot\na u \right)}_{L^{6/5}}\nonumber\\
&\ls \norm{\na^{l+1}u}_{L^2}\left(\norm{u}_{L^3}\norm{\na^{l+1}u}_{L^2}+\norm{\na^{l}u}_{L^2}\norm{\na u}_{L^3}\right)\nonumber\\
&\ls \norm{\na^{l+1}u}_{L^2}\left(\norm{u}_{L^3}\norm{\na^{l+1}u}_{L^2}+\norm{u}_{L^2}^{\frac{1}{l+1}}\norm{\na^{l+1}u}_{L^2}^{\frac{l}{l+1}}\norm{\na^{\frac{l+1}{2l}} u}_{L^2}^{\frac{l}{l+1}}\norm{\na^{l+1}u}_{L^2}^{\frac{1}{l+1}}\right)\nonumber\\
&\ls\de\norm{\na^{l+1} u}_{L^2}^2.
\end{align}
Hence, we may conclude that for $l\ge0$,
\begin{align}\label{youer decay}
-\int\bar\rho\na^{l} u\cdot\na^{l}\left(u\cdot\na u \right)\ls\de\norm{\na^{l+1} u}_{L^2}^2.
\end{align}
Integrating by parts and by the product estimates \eqref{product estimate}, the nonlinear estimates \eqref{A32} and \eqref{a priori2}, we obtain
\begin{align}\label{yousan decay}
&-\int\bar\rho\na^lu\cdot\na^l\na\mathcal{R}=\int\bar\rho\na^{l}{\rm div }u \na^{l}\mathcal{R}\ls\norm{\na^{l+1}u}_{L^2}\norm{\na^{l}\mathcal{R}}_{L^2}\nonumber\\
&\quad\ls\de\left(\norm{\varrho}_{L^\infty}^2+\norm{\na^l\varrho}_{L^2}^2+\norm{\na^{l+1}u}_{L^2}^2\right)
\end{align}
and
\begin{align} \label{yousi decay}
& -\int \bar\rho\na^lu\cdot\na^l \na((h'(\rho_s)-h'(\bar\rho))\varrho)=\int \bar\rho\na^{l+1}u\cdot\na^l((h'(\rho_s)-h'(\bar\rho))\varrho) \nonumber\\
&\quad \ls \norm{\na^{l+1}u}_{L^2}\norm{\na^l  ((h'(\rho_s)-h'(\bar\rho))\varrho)}_{L^2}\nonumber\\
&\quad \ls \norm{\na^{l+1}u}_{L^2}\left(\norm{h'(\rho_s)-h'(\bar\rho)}_{L^\infty}\norm{\na^l \varrho}_{L^2}+\norm{\na^l  (h'(\rho_s)-h'(\bar\rho))}_{L^2}\norm{\varrho}_{L^\infty}\right)\nonumber\\
&\quad \ls \delta \left(\norm{\na^{l+1}u}_{L^2}^2+\norm{\na^l \varrho}_{L^2}^2+ \norm{\varrho}_{L^\infty}^2\right).
\end{align}

We now estimate the last term on the right-hand side of \eqref{yi u2}. For $l=0$, we easily obtain
\begin{align}\label{yi youwu decay}
\int&\bar\rho u\cdot\left(\frac{1}{\varrho+\rho_s}-\frac{1}{\bar\rho}\right)\left(\mu\Delta u+(\mu+\mu')\na\diver u\right)\nonumber\\
&\ls\norm{u}_{L^6}\norm{\frac{1}{\varrho+\rho_s}-\frac{1}{\bar\rho}}_{L^2}\norm{\na^2u}_{L^3}\ls\de\left(\norm{\na u}_{L^2}^2+\norm{\na^2u}_{L^3}^2\right).
\end{align}
For $l\ge1$, by the integration by parts, the product estimates \eqref{product estimate} and \eqref{a priori2}, we obtain
\begin{align}\label{er youwu decay}
\int&\bar\rho\na^lu\cdot \na^l\left(\left(\frac{1}{\varrho+\rho_s}-\frac{1}{\bar\rho}\right)\left(\mu\Delta u+(\mu+\mu')\na\diver u\right)\right)\nonumber\\
&=-\int\bar\rho\na^{l+1}u\cdot \na^{l-1}\left(\left(\frac{1}{\varrho+\rho_s}-\frac{1}{\bar\rho}\right)\left(\mu\Delta u+(\mu+\mu')\na\diver u\right)\right)
\nonumber\\
&\ls\norm{\na^{l+1}u}_{L^2}\norm{\na^{l-1}\left(\left(\frac{1}{\varrho+\rho_s}-\frac{1}{\bar\rho}\right)\left(\mu\Delta u+(\mu+\mu')\na\diver u\right)\right)}_{L^2}\nonumber\\
&\ls \norm{\na^{l+1}u}_{L^2}\left(\norm{ \frac{1}{\varrho+\rho_s}-\frac{1}{\bar\rho} }_{L^\infty}\norm{\na^{l+1}u}_{L^2}+\norm{\na^{l-1} \left(\frac{1}{\varrho+\rho_s}-\frac{1}{\bar\rho}\right)}_{L^6}\norm{\na^2u}_{L^3}\right)\nonumber\\
&\ls \delta \left(\norm{\na^{l+1}u}_{L^2}^2+ \norm{\na^2u}_{L^3}^2\right).
\end{align}
Hence, we may conclude that for $l\ge0$,
\begin{align}\label{youwu decay}
\int&\bar\rho\na^lu\cdot \na^l\left(\left(\frac{1}{\varrho+\rho_s}-\frac{1}{\bar\rho}\right)\left(\mu\Delta u+(\mu+\mu')\na\diver u\right)\right)\nonumber\\
&\ls \delta \left(\norm{\na^{l+1}u}_{L^2}^2+ \norm{\na^2u}_{L^3}^2\right).
\end{align}

Now, we turn to estimate the terms on the left-hand side of \eqref{yi u2}. For the second term, we deduce from \eqref{viscosity} that
\begin{align}\label{zuoer decay}
\int\mu|\na^{l+1}u|^2+(\mu+\mu')|\na^l\diver u|^2\gt\norm{\na^{l+1}u}_{L^2}^2.
\end{align}
For the remaining Poisson term, we integrate by parts and use the first equation and the Poisson equation in $\eqref{NSP per2}$ to obtain

\begin{align}\label{yi zuosan decay}
&- \int \bar\rho\na^lu\cdot\na^l\na\Phi=\int\na^l\Phi\,\na^l\diver(\bar\rho u)\nonumber\\
&\quad=- \int\na^l\Phi\na^l\pa_t\varrho+ \na^l\Phi\na^l\diver((\varrho+\rho_s-\bar\rho)  u)\nonumber\\
&\quad=- \int\na^l\Phi\na^l\pa_t\De\Phi-\na^l((\varrho+\rho_s-\bar\rho)  u)\cdot\na^l\na\Phi\nonumber\\
&\quad= \frac{1}{2}\frac{d}{dt}\int|\na^l\na\Phi|^2+\int\na^l((\varrho+\rho_s-\bar\rho)  u)\cdot\na^l\na\Phi.
\end{align}
By the product estimates \eqref{product estimate} and \eqref{a priori2}, we obtain
\begin{align}\label{er zuosan decay}
&\int\na^l((\rho_s-\bar\rho) u)\cdot\na^l\na\Phi
\ls\norm{\na^l((\rho_s-\bar\rho) u)}_{L^{2}}\norm{\na^l\na\Phi}_{L^2}\nonumber\\
&\quad\ls \left(\norm{\rho_s-\bar\rho}_{L^\infty}\norm{\na^{l}u}_{L^2}+\norm{\na^{l}(\rho_s-\bar\rho)}_{L^2}\norm{ u}_{L^\infty}\right)\norm{\na^l\na\Phi}_{L^2}\nonumber\\
&\quad\ls\de\left(\norm{\na^l\na\Phi}_{L^2}^2+\norm{\na^{l}u}_{L^2}^2+\norm{ u}_{L^\infty}^2\right)
\end{align}
and
\begin{align}\label{san zuosan decay}
&\int\na^l( \varrho  u)\cdot\na^l\na\Phi
\ls\norm{\na^l(\varrho u)}_{L^{2}}\norm{\na^l\na\Phi}_{L^2}\nonumber\\
&\quad\ls \left(\norm{\varrho}_{L^\infty}\norm{\na^{l}u}_{L^2}+\norm{\na^{l}\varrho}_{L^2}\norm{ u}_{L^\infty}\right)\norm{\na^l\na\Phi}_{L^2}\nonumber\\
&\quad\ls\de\left(\norm{\na^l\na\Phi}_{L^2}^2+\norm{\na^{l}u}_{L^2}^2+\norm{\na^{l}\varrho}_{L^2}^2\right).
\end{align}
Hence, we may conclude that
\begin{align}\label{zuosan decay}
-\int \bar\rho\na^lu\cdot\na^l\na\Phi\ge \frac{1}{2}\frac{d}{dt}\int|\na^l\na\Phi|^2 -C\de\left(\norm{\na^l\na\Phi}_{L^2}^2+\norm{\na^{l}\varrho}_{L^2}^2+\norm{\na^{l}u}_{L^2}^2
+\norm{u}_{L^\infty}^2\right).
\end{align}

Consequently, plugging the estimates \eqref{youyi decay}, \eqref{youer decay}--\eqref{yousi decay}, \eqref{youwu decay}--\eqref{zuoer decay} and \eqref{zuosan decay} into \eqref{yi u2}, since $\de$ is small, we deduce \eqref{energy 1decay}.
\end{proof}

We now recover the dissipation estimates for $\varrho$.
\begin{lemma} \label{En le 2decay}
For $0\le l\le k-1$, we have
\begin{align}\label{energy 2decay}
\frac{d}{dt}&\int
\na^{l}u\cdot \na\na^l\varrho+\norm{\na^l\varrho}_{L^2}^2+\norm{\na^{l+1}\varrho}_{L^2}^2\nonumber\\
&\ls\norm{\na^{l+1}u}_{L^2}^2+\norm{\na^{l+2}u}_{L^2}^2+\de\left(\norm{\varrho}_{L^\infty}^2+\norm{u}_{L^\infty}^2+\norm{\na u}_{L^3}^2+\norm{\na^2u}_{L^3}^2\right).
\end{align}
\end{lemma}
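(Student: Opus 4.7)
The plan is to mirror the strategy of Lemma \ref{En le 4}, but applied to the constant-coefficient reformulation \eqref{NSP per2}. I would apply $\na^l$ to the momentum equation in \eqref{NSP per2}, multiply by $\na^l\na\varrho=\na^{l+1}\varrho$, and integrate over $\r3$. Because the principal pressure part is now $h'(\bar\rho)\na\varrho$ with constant coefficient, this immediately gives the clean dissipation contribution $h'(\bar\rho)\|\na^{l+1}\varrho\|_{L^2}^2$ without the commutator terms that appeared in Lemma \ref{En le 4}; the viscosity terms integrate against $\na^{l+1}\varrho$ to produce a harmless factor bounded by $\|\na^{l+2}u\|_{L^2}\|\na^{l+1}\varrho\|_{L^2}$.

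For the time-derivative term on $u$, I would integrate by parts in $t$ to produce $-\tfrac{d}{dt}\int\na^l u\cdot\na\na^l\varrho$ plus $\int\na^l u\cdot\na\na^l\pa_t\varrho$, and then substitute $\pa_t\varrho=-\bar\rho\,\diver u-\diver((\varrho+\rho_s-\bar\rho)u)$ from the first equation of \eqref{NSP per2}; integrating by parts in $x$ yields terms controlled by $\|\na^{l+1}u\|_{L^2}^2$ together with nonlinear pieces handled via the product estimates \eqref{product estimate}, interpolation from Lemma \ref{A1}, and the smallness assumption \eqref{a priori2} on $\|\rho_s-\bar\rho\|_{H^{k+1}}$. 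The Poisson term integrates as in \eqref{liu gaojie varrho}: $-\int\na^l\na\Phi\cdot\na^{l+1}\varrho=\int\na^l\De\Phi\,\na^l\varrho=\|\na^l\varrho\|_{L^2}^2$, supplying the missing $\|\na^l\varrho\|_{L^2}^2$ dissipation.

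The remaining nonlinear terms on the right-hand side of the momentum equation are $u\cdot\na u$, $\na\mathcal R$, $\na((h'(\rho_s)-h'(\bar\rho))\varrho)$, and the viscosity perturbation $(1/(\varrho+\rho_s)-1/\bar\rho)(\mu\De u+(\mu+\mu')\na\diver u)$. Each is tested against $\na^{l+1}\varrho$ and estimated by the product estimates \eqref{product estimate} together with the nonlinear estimates \eqref{A31}--\eqref{A32}. The key observation, analogous to \eqref{yousi decay}, is that $h'(\rho_s)-h'(\bar\rho)$ is small in $L^\infty\cap H^{k+1}$ thanks to \eqref{a priori2} and Proposition \ref{prop}, so this term behaves like the other $O(\de)$ nonlinearities. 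The source norms appearing on the right will naturally be $\|\varrho\|_{L^\infty}$, $\|u\|_{L^\infty}$, $\|\na u\|_{L^3}$, and $\|\na^2 u\|_{L^3}$, matching the form of \eqref{energy 2decay}, with all $\|\na^{l+1}\varrho\|_{L^2}$ and $\|\na^l\varrho\|_{L^2}$ factors absorbed via Cauchy's inequality using the smallness of $\de$.

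The main obstacle is the fact that $l$ is allowed to be fractional (since we need $0\le l\le k-1$ but not integer), so the commutator-free structure of the constant-coefficient linear part is crucial, and all nonlinear terms must be controlled by fractional product estimates rather than Leibniz rule. Assuming the fractional versions of \eqref{product estimate}, \eqref{A31}, and \eqref{A32} in the Appendix cover noninteger $l$, combining all estimates and using the interpolation plus Young's inequality to absorb the $\|\na^{l+1}\varrho\|_{L^2}^2$ pieces yields \eqref{energy 2decay}.
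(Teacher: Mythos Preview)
Your proposal is correct and follows essentially the same approach as the paper: apply $\na^l$ to the momentum equation of the constant-coefficient reformulation \eqref{NSP per2}, test against $\na^{l+1}\varrho$, handle the $\pa_t u$ term by integrating in $t$ and substituting for $\pa_t\varrho$, extract $\norm{\na^l\varrho}_{L^2}^2$ from the Poisson term as in \eqref{liu gaojie varrho}, and estimate all nonlinear pieces via the product estimates \eqref{product estimate} and the nonlinear estimate \eqref{A32} exactly as in Lemma \ref{En le 1decay}. Your observation that the constant coefficient $h'(\bar\rho)$ eliminates the commutator term $[\na^{l+1},h'(\rho_s)]\varrho$ (which had to be handled separately in Lemma \ref{En le 4}) and thereby permits fractional $l$ is precisely the point of working with \eqref{NSP per2} rather than \eqref{NSP per}.
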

\begin{proof}
Applying $\na^l$ to the second equation in $\eqref{NSP per2}$, multiplying the resulting identity by $\na^l\na\varrho$ and then integrating over $\r3$, we obtain
\begin{align}
&\int\na^l\pa_tu\cdot\na^l\na\varrho+\int h'(\bar\rho)|\na^{l+1}\varrho|^2-\int\na^l\na\Phi\cdot\na^l\na\varrho\nonumber\\
&\quad\ls \left( \norm{\na^{l+1}\left((h'(\rho_s)-h'(\bar\rho))\varrho\right)}_{L^2}+\norm{\na^{l+2}u}_{L^2} +\norm{\na^l\left(u\cdot\na u\right)}_{L^2}\right.\nonumber\\
&\qquad\ \left. +\norm{\na^{l+1}\mathcal{R}}_{L^2}+\norm{\na^l\left(\left(\frac{1}{\varrho+\rho_s}-\frac{1}{\bar\rho}\right)\na^2u\right)}_{L^2}\right)\norm{\na^{l+1}\varrho}_{L^2}
.\end{align}
As \eqref{san 0jie varrho}, we deduce
\begin{align}
&-\int  \na^{l} \partial_tu\cdot\na\na^l\varrho=-\frac{d}{dt}\int
\na^{l}u\cdot\na\na^l\varrho +\int\na^{l}
 u\cdot \na \na^{l}\pa_t\varrho\nonumber\\
&\quad=-\frac{d}{dt}\int
\na^{l}u\cdot\na\na^l\varrho+\int \na^{l} \diver  u\na^{l}\left(
u \cdot \na\varrho+ \varrho\diver  u+\rho_s\diver u+u\cdot\na \rho_s\right)\nonumber\\
&\quad\le-\frac{d}{dt}\int
\na^{l}u\cdot \na\na^l\varrho+\norm{\na^{l+1}u}_{L^2}^2+\norm{\na^{l}(u \cdot \na\varrho)}_{L^2}^2\nonumber\\
&\qquad+\norm{\na^{l}(\varrho\diver  u)}_{L^2}^2+\norm{\na^{l}(\rho_s\diver  u)}_{L^2}^2+\norm{\na^{l}(u\cdot\na\rho_s)}_{L^2}^2.
\end{align}
As \eqref{yi gaojie varrho2}--\eqref{yi gaojie varrho}, applying the product estimates \eqref{product estimate} and the nonlinear estimates \eqref{A32} as in Lemma \ref{En le 1decay}, we obtain
\begin{align}\label{yi gaojie varrhodecay}
&\frac{d}{dt}\int
\na^{l}u\cdot\na\na^l\varrho+\int h'(\rho_s)|\na^{l+1}\varrho|^2+\norm{\na^l\varrho}_{L^2}^2\nonumber\\
&\quad\ls \left(\norm{\na^{l+1}u}_{L^2} +\norm{\na^{l+2}u}_{L^2} +\de\norm{\varrho}_{L^\infty}+\de\norm{u}_{L^\infty}\right.\nonumber\\
&\qquad\left.+\de\norm{\na u}_{L^3}+\de\norm{\na^2u }_{L^3}+\de\norm{\na^{l+1}\varrho}_{L^2}\right)\norm{\na^{l+1}\varrho}_{L^2}.
\end{align}
By Cauchy's inequality and since $\de$ is small, we deduce \eqref{energy 2decay} from \eqref{yi gaojie varrhodecay}.
\end{proof}

We now combine Lemmas \ref{En le 1decay}--\ref{En le 2decay} to derive the following proposition.
\begin{Proposition}\label{proposition decay}
Let $k\ge3$ and $0\le\ell\le 3/2$. Then there exists an energy functional $\mathcal{E}_\ell^k$
equivalently to $\norm{\na^{\ell}(\varrho,u,\na\Phi)}_{H^{k-\ell}}^2$
such that
\begin{align}\label{energy 5decay}
\frac{d}{dt}\mathcal{E}_\ell^k+\norm{\na^{\ell}\varrho}_{H^{k-\ell}}^2+\norm{\na^{\ell+1}u}_{H^{k-\ell}}^2
\ls \de\left( \norm{ \na^{\ell}\na\Phi }_{L^2}^2+\norm{ \na^{\ell}u }_{L^2}^2+\norm{\varrho}_{L^\infty}^2
+\norm{u}_{L^\infty}^2\right).
\end{align}
\end{Proposition}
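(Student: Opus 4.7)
The plan is to combine the energy estimate \eqref{energy 1decay} applied at the endpoint derivative levels $l=\ell$ and $l=k$ with the dissipation estimate \eqref{energy 2decay} applied at $l=\ell$ and $l=k-1$, and then to absorb the remaining $L^3$-type nonlinearities through Sobolev embedding plus interpolation in $\R^3$, where the restriction $\ell\le 3/2$ becomes essential.

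First I would sum \eqref{energy 1decay} at $l=\ell$ and $l=k$. By the interpolation of Lemma \ref{A1}, the quantity under $\tfrac{d}{dt}$ is equivalent to $\norm{\na^\ell(\varrho,u,\na\Phi)}_{H^{k-\ell}}^2$, while the sum of the two $u$-dissipation terms is equivalent to $\norm{\na^{\ell+1}u}_{H^{k-\ell}}^2$. On the right-hand side, besides the quantities already permitted on the right of \eqref{energy 5decay}, there appear $\de\bigl(\norm{\na^\ell\varrho}_{L^2}^2+\norm{\na^k\varrho}_{L^2}^2\bigr)$ and $\de\bigl(\norm{\na u}_{L^3}^2+\norm{\na^2u}_{L^3}^2\bigr)$, which need to be absorbed. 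Next I would sum \eqref{energy 2decay} at $l=\ell$ and $l=k-1$, multiply by a sufficiently small $\eta>0$, and add to Step 1. This contributes $\eta\norm{\na^\ell\varrho}_{H^{k-\ell}}^2$ as new dissipation, which absorbs $\de\bigl(\norm{\na^\ell\varrho}_{L^2}^2+\norm{\na^k\varrho}_{L^2}^2\bigr)$ from Step 1 since $\de\ll\eta$. The costs $\norm{\na^{\ell+1}u}_{L^2}^2+\norm{\na^{\ell+2}u}_{L^2}^2$ and $\norm{\na^ku}_{L^2}^2+\norm{\na^{k+1}u}_{L^2}^2$ from the dissipation lemma are controlled by the $u$-dissipation from Step 1 once $\eta$ is small, after interpolating $\norm{\na^{\ell+2}u}_{L^2}$ and $\norm{\na^k u}_{L^2}$ between $\norm{\na^{\ell+1}u}_{L^2}$ and $\norm{\na^{k+1}u}_{L^2}$ via Lemma \ref{A1}.

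The remaining task---absorbing $\de\bigl(\norm{\na u}_{L^3}^2+\norm{\na^2u}_{L^3}^2\bigr)$---is the main obstacle and is precisely where the constraint $0\le\ell\le 3/2$ enters. Combining the Sobolev embedding $H^{1/2}(\R^3)\hookrightarrow L^3(\R^3)$ with Lemma \ref{A1}, one verifies that
\begin{align*}
\norm{\na u}_{L^3}^2+\norm{\na^2 u}_{L^3}^2\ls\norm{\na^{3/2}u}_{L^2}^2+\norm{\na^{5/2}u}_{L^2}^2\ls\norm{\na^\ell u}_{L^2}^2+\norm{\na^{\ell+1}u}_{L^2}^2+\norm{\na^{k+1}u}_{L^2}^2
\end{align*}
whenever $0\le\ell\le 3/2$ and $k\ge 3$: when $\ell\le 1/2$ the $\na^{3/2}u$ term interpolates between $\na^{\ell+1}u$ and $\na^{k+1}u$, while for $\ell\in(1/2,3/2]$ it interpolates between $\na^\ell u$ and $\na^{\ell+1}u$; the $\na^{5/2}u$ term always interpolates between $\na^{\ell+1}u$ and $\na^{k+1}u$ since $\ell+1\le 5/2\le k+1$. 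Each piece on the right is either part of the $u$-dissipation produced in Steps 1--2 or permitted on the right of \eqref{energy 5decay} (with prefactor $\de$), so the absorption goes through. Defining $\mathcal{E}_\ell^k$ as the total quantity under $\tfrac{d}{dt}$ in the resulting weighted sum, for $\eta$ small the cross terms from \eqref{energy 2decay} are dominated by the positive-definite parts from \eqref{energy 1decay}, giving $\mathcal{E}_\ell^k\simnew\norm{\na^\ell(\varrho,u,\na\Phi)}_{H^{k-\ell}}^2$, and \eqref{energy 5decay} follows.
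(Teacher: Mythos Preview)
Your approach is correct and essentially the same as the paper's: the paper sums \eqref{energy 1decay} over all $l$ from $\ell$ to $k$ and \eqref{energy 2decay} over all $l$ from $\ell$ to $k-1$ before multiplying the latter by a small constant and adding, then absorbs the $L^3$ terms by exactly the interpolation you describe, whereas you sum only at the endpoint levels and recover the full $H^{k-\ell}$ norms via Lemma~\ref{A1}. One small omission: applying \eqref{energy 1decay} at $l=k$ also produces $\de\norm{\na^k u}_{L^2}^2$ and $\de\norm{\na^k\na\Phi}_{L^2}^2=\de\norm{\na^{k-1}\varrho}_{L^2}^2$ on the right, which you did not list, but both are absorbed into the $u$- and $\varrho$-dissipation respectively by the same interpolation you already invoke.
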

\begin{proof}
Summing up the estimates \eqref{energy 1decay} of Lemma \ref{En le 1decay} for from $l=\ell$ to $k$, by the Poisson equation in \eqref{NSP per2}, we obtain
\begin{align}\label{1proof1}
&\frac{d}{dt}\left( h'(\bar\rho)\norm{\na^{\ell}
\varrho}_{H^{k-\ell}}^2+\bar\rho\norm{\na^{\ell}u}_{H^{k-\ell}}^2+\norm{\na^\ell\na\Phi}_{H^{k-\ell}}^2\right) + \norm{\na^{\ell+1} u}_{H^{k-\ell}}^2\nonumber\\
&\quad\ls \de\left(\norm{\na^{\ell}\varrho}_{H^{k-\ell}}^2+\norm{\na^{\ell}\na\Phi}_{L^2}^2+\norm{\na^{\ell}u}_{H^{k-\ell}}^2+\norm{\varrho}_{L^\infty}^2
+\norm{u}_{L^\infty}^2+\norm{\na u}_{L^3}^2+\norm{\na^2u}_{L^3}^2\right).
\end{align}
Summing up the estimates \eqref{energy 2decay} of Lemma \ref{En le 2decay} for from $l=\ell$ to $k-1$, we obtain
\begin{align}\label{1proof2}
&\frac{d}{dt}\sum_{\ell\le l\le k-1}\int \na^lu\cdot\na\na^l\varrho + \norm{\na^{\ell}\varrho}_{H^{k-\ell}}^2\nonumber\\
&\quad\ls \norm{\na^{\ell+1}u}_{H^{k-\ell}}^2+\de\left(\norm{\varrho}_{L^\infty}^2+\norm{u}_{L^\infty}^2+\norm{\na u}_{L^3}^2+\norm{\na^2u}_{L^3}^2\right).
\end{align}
Multiplying \eqref{1proof2} by a small constant $\eps>0$ and then adding the resulting inequality to \eqref{1proof1}, since $\delta$ is small, we deduce that for $0\le \ell\le k-1$,
\begin{align}\label{1proof3}
\frac{d}{dt}&\left(h'(\bar\rho)\norm{\na^{\ell} \varrho}_{H^{k-\ell}}^2+\bar\rho\norm{\na^{\ell}
u}_{H^{k-\ell}}^2+ \norm{\na^\ell\na\Phi}_{H^{k-\ell}}^2+\epsilon\sum_{\ell\le l\le k-1}\int
\na^lu\cdot\na\na^l\varrho\right)\nonumber\\
&+ \norm{\na^{\ell}\varrho}_{H^{k-\ell}}^2+\norm{\na^{\ell+1}
u}_{H^{k-\ell}}^2\nonumber\\
&\ls\de\left(\norm{ \na^{\ell}\na\Phi }_{L^2}^2+\norm{ \na^{\ell}u }_{L^2}^2+\norm{\varrho}_{L^\infty}^2+\norm{u}_{L^\infty}^2+\norm{\na u}_{L^3}^2+\norm{\na^2u}_{L^3}^2\right).
\end{align}
We define $\mathcal{E}_\ell^k$ to be the expression under the time derivative in \eqref{1proof3}. Since
$\eps$ is small, $\mathcal{E}_\ell^k$ is equivalent to $\norm{\na^{\ell}(\varrho,u,\na\Phi)}_{H^{k-\ell}}^2$.
Then we deduce that for $0\le \ell\le k-1$,
\begin{align}\label{1proof5}
\frac{d}{dt}&\mathcal{E}_\ell^k+\norm{\na^{\ell}\varrho}_{H^{k-\ell}}^2+\norm{\na^{\ell+1}u}_{H^{k-\ell}}^2\nonumber\\
&\ls\de\left(\norm{ \na^{\ell}\na\Phi }_{L^2}^2+\norm{ \na^{\ell}u }_{L^2}^2+\norm{\varrho}_{L^\infty}^2+\norm{u}_{L^\infty}^2+\norm{\na u}_{L^3}^2+\norm{\na^2u}_{L^3}^2\right).
\end{align}

Now we take $0\le \ell\le3/2$. Then by the interpolation, we have
\begin{align}\label{1proof7}
 \norm{\na u}_{L^3}^2\ls\norm{\na^{3/2} u}_{L^2}^2\ls \norm{\na^{\ell} u}_{L^2}^2+\norm{\na^{k} u}_{L^2}^2
 \end{align}
 and
\begin{align}\label{1proof8}
\norm{\na^2u}_{L^3}^2\ls\norm{\na^{5/2} u}_{L^2}^2\ls \norm{\na^{\ell+1} u}_{L^2}^2+\norm{\na^{k} u}_{L^2}^2.
\end{align}
Since $\de$ is small, \eqref{1proof5} implies  \eqref{energy 5decay} by \eqref{1proof7}--\eqref{1proof8}.
\end{proof}

\subsection{Duhamel form analysis}

In order to use the linear decay estimates for the linear system with constant coefficients, we will rewrite the Navier-Stokes-Poisson system \eqref{NSP per2} as the Navier-Stokes equations with a non-local self-consistent force in the following form:
\begin{align} \label{NSP per3}
\begin{cases}
\displaystyle\pa_t\varrho+\bar\rho \diver u=N^1,   \\
\displaystyle\pa_tu+h'(\bar\rho)\na\varrho-\frac{1}{\bar\rho}\left(\mu\Delta u+(\mu+\mu')\na\diver u\right)-\na\De^{-1}\varrho=N^2,\\
(\varrho,u)\mid_{t=0}=(\varrho_0, u_0),
\end{cases}
\end{align}
where the ``nonlinear" terms are given by
\begin{align}
 N^1=-\diver((\varrho+\rho_s-\bar\rho)  u)
 \end{align}
and
\begin{align}
N^2=-u\cdot\na u-\na \mathcal{R}-\na((h'(\rho_s)-h'(\bar\rho))\varrho) +\left(\frac{1}{\varrho+\rho_s}-\frac{1}{\bar\rho}\right)\left(\mu\Delta u+(\mu+\mu')\na\diver u\right).
\end{align}
By the Duhamel principle, the solution $(\varrho,u)$ to the problem \eqref{NSP per3} can be expressed as
\begin{align}\label{Duhamel}
(\varrho,u)(t)=e^{-t\mathbb{A}}(\varrho_0,u_0)+\int_0^te^{-(t-\tau)\mathbb{A}}(N^1,N^2)(\tau)\,d\tau.
\end{align}
Here the matrix differential operator $\mathbb{A}$ is defined by
\begin{align} \mathbb{A}=\left(
\begin{array}{cc}
0&\bar\rho\diver\\
h'(\bar\rho)\na-\na\De^{-1}&\quad-\frac{1}{\bar\rho}\left(\mu\De+(\mu+\mu')\na\diver\right)
\end{array}
\right).
\end{align}

In light of the analysis in \cite{LMZ} and \cite{W}, we have the followings about the time decay rates of the solution semigroup $e^{-t\mathbb{A}}$ of the linearized system of \eqref{NSP per3}.
\begin{lemma}\label{llll}
Let $(\widetilde{\varrho},\widetilde{u})=e^{-t\mathbb{A}}(\varrho_0,u_0)$. Then for $1\le p\le2$, $ q\ge2$ and $\ell\ge 0$, we have
\begin{align}\label{linear decay1}
\norm{\na^{\ell }\widetilde{\varrho}}_{L^q}
\ls(1+t)^{-\frac{3}{2}\left(\frac{1}{p}-\frac{1}{q}\right)-\frac{\ell}{2}-\frac{1}{2}}\left(\norm{(\na^{-1}\varrho_0,u_0)}_{L^p}+ \norm{\na^{\ell }(\varrho_0,u_0)}_{L^q}\right)
\end{align}
and
\begin{align}\label{linear decay2}
\norm{\na^\ell\widetilde{u}}_{L^q}
\ls(1+t)^{-\frac{3}{2}\left(\frac{1}{p}-\frac{1}{q}\right)-\frac{\ell}{2}}\left(\norm{(\na^{-1}\varrho_0,u_0)}_{L^p}+ \norm{\na^\ell(\varrho_0,u_0)}_{L^q}\right).
\end{align}
\end{lemma}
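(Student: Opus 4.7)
The plan is to analyze the semigroup $e^{-t\mathbb{A}}$ by Fourier analysis of the symbol $\hat{\mathbb{A}}(\xi)$, in the spirit of \cite{LMZ,W}. After applying $\mathcal{F}_x$ to the linearization of \eqref{NSP per3}, I would decompose $\hat u = \hat u^{\parallel} + \hat u^{\perp}$ along and transverse to $\xi$. The transverse component satisfies the heat equation $\partial_t \hat u^{\perp} = -(\mu/\bar\rho)|\xi|^2 \hat u^{\perp}$, so it contributes only the standard heat-kernel decay. The scalar longitudinal unknown $a := \hat u\cdot\xi/|\xi|$ couples with $\hat\varrho$ through a $2\times 2$ symbol whose characteristic polynomial is
\begin{equation*}
\lambda^2 + \nu|\xi|^2\lambda + \bar\rho\bigl(h'(\bar\rho)|\xi|^2 + 1\bigr)=0,\qquad \nu=(2\mu+\mu')/\bar\rho.
\end{equation*}
The ``$+1$'' here is the Poisson contribution, and is the whole source of improvement over the Navier--Stokes case.

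Solving the quadratic yields roots $\lambda_{\pm}(\xi)$ with the asymptotics $\lambda_\pm \simeq -\tfrac{\nu}{2}|\xi|^2 \pm i\sqrt{\bar\rho}$ as $|\xi|\to 0$, while for $|\xi|\to\infty$ one has $\mathrm{Re}\,\lambda_+ \simeq -\bar\rho h'(\bar\rho)/\nu$ and $\lambda_- \simeq -\nu|\xi|^2$; in particular $\mathrm{Re}\,\lambda_\pm$ is uniformly negative on $|\xi|\ge r_0$ for any fixed $r_0>0$. The decisive feature in the low-frequency regime is that $\lambda_+ - \lambda_- \simeq 2i\sqrt{\bar\rho}$ is bounded away from zero; so the oscillation frequency is $O(1)$ rather than $O(|\xi|)$ as in the neutral Navier--Stokes case. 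Writing the $2\times 2$ matrix exponential via the spectral formula $e^{tM} = \sum_\pm e^{\lambda_\pm t} P_\pm$ and inspecting each of the four entries, I would extract the pointwise symbol bounds
\begin{equation*}
\bigl|\hat{\mathcal{G}}_{\varrho\varrho}(t,\xi)\bigr|+\bigl|\hat{\mathcal{G}}_{uu}(t,\xi)\bigr|\lesssim e^{-c|\xi|^2 t},\qquad \bigl|\hat{\mathcal{G}}_{\varrho u}(t,\xi)\bigr|\lesssim |\xi|\,e^{-c|\xi|^2 t},\qquad \bigl|\hat{\mathcal{G}}_{u\varrho}(t,\xi)\bigr|\lesssim |\xi|^{-1}e^{-c|\xi|^2 t}
\end{equation*}
on the low-frequency set $|\xi|\le r_0$, with purely exponential decay on the high-frequency set $|\xi|\ge R_0$ and uniform exponential decay on the intermediate annulus. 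The extra $|\xi|$ in $\hat{\mathcal{G}}_{\varrho u}$ comes from the continuity equation $\hat\varrho' = -i\bar\rho|\xi|a$ combined with the boundedness of $(\lambda_+ -\lambda_-)^{-1}$; the $|\xi|^{-1}$ in $\hat{\mathcal{G}}_{u\varrho}$ reflects the dominant $1/|\xi|$ in the Poisson symbol at low frequency.

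To convert these symbol bounds into \eqref{linear decay1}--\eqref{linear decay2}, I would split the frequency space into the three regions above. On $|\xi|\ge r_0$ the exponential decay estimate $\|e^{-t\hat{\mathbb{A}}}\hat w_0\|_{L^2(|\xi|\ge r_0)}\lesssim e^{-ct}\|\hat w_0\|_{L^2}$ combined with Sobolev embedding immediately absorbs this piece into $\|\nabla^\ell(\varrho_0,u_0)\|_{L^q}$. On $|\xi|\le r_0$ I apply the standard Hausdorff--Young/Riesz--Thorin estimate
\begin{equation*}
\bigl\|\mathcal{F}^{-1}\!\bigl(|\xi|^{a} e^{-c|\xi|^2 t}\hat g\,\mathbf{1}_{|\xi|\le r_0}\bigr)\bigr\|_{L^q(\R^3)} \lesssim (1+t)^{-\frac{3}{2}(\frac{1}{p}-\frac{1}{q})-\frac{a}{2}}\|g\|_{L^p(\R^3)},\qquad 1\le p\le 2\le q,\ a\ge 0.
\end{equation*}
For $\hat\varrho(t)$ the contribution of $\hat u_0$ uses $a=\ell+1$ directly (from the extra $|\xi|$ in $\hat{\mathcal{G}}_{\varrho u}$), while the contribution of $\hat\varrho_0$ is rewritten as $\hat\varrho_0 = |\xi|\widehat{\nabla^{-1}\varrho_0}$ so that it also carries the effective exponent $a=\ell+1$; this is the precise point at which the norm $\|\nabla^{-1}\varrho_0\|_{L^p}$ appears. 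For $\hat u(t)$ the contribution of $\hat u_0$ uses $a=\ell$, while the contribution of $\hat\varrho_0$ is again recast via $\hat\varrho_0 = |\xi|\widehat{\nabla^{-1}\varrho_0}$, whose product with the $|\xi|^{-1}$ of $\hat{\mathcal{G}}_{u\varrho}$ yields the pure heat symbol $e^{-c|\xi|^2 t}$ acting on $\widehat{\nabla^{-1}\varrho_0}$, giving the $(1+t)^{-\frac{3}{2}(1/p-1/q)-\ell/2}$ rate.

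The main obstacle is the careful tracking of the algebraic cancellations in the $2\times 2$ resolvent near $|\xi|=0$: one must verify uniformly in $|\xi|\le r_0$ that $(\lambda_+-\lambda_-)^{-1}$ is bounded and that the required $|\xi|^{\pm 1}$ factors in the four symbol entries survive with the right signs. Once this is done, the high-frequency contribution is routine exponential decay, and the low-frequency estimates are assembled by the $L^p$--$L^q$ heat bound above. Combining the pieces on both regions gives \eqref{linear decay1}--\eqref{linear decay2}.
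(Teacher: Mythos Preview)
Your proposal is correct and follows precisely the Fourier-analytic spectral approach of \cite{LMZ,W}, which is all the paper invokes for this lemma (no independent proof is given there). The key structural observations you identify---the $O(1)$ gap $\lambda_+-\lambda_-$ at low frequency coming from the Poisson ``$+1$'', the resulting $|\xi|^{\pm 1}$ factors in the off-diagonal symbol entries, and the recasting $\hat\varrho_0=|\xi|\,\widehat{\nabla^{-1}\varrho_0}$---are exactly the mechanism in \cite{W} that yields the extra $\tfrac12$ in the density decay rate.
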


Applying Lemma \ref{llll} to \eqref{Duhamel}, we obtain the following proposition.
\begin{Proposition}\label{mingti wu}
It holds that for $1\le p,r\le 2$, $q\ge2$ and $\ell\ge0$,
\begin{align}\label{varrho l-1}
\norm{\na^{\ell}\varrho (t)}_{L^q}&\ls(1+t)^{-\frac{3}{2}\left(\frac{1}{p}-\frac{1}{q}\right)-\frac{\ell}{2}-\frac{1}{2}}\left(\norm{(\na^{-1}\varrho_0,u_0)}_{L^p}+ \norm{\na^{\ell}(\varrho_0,u_0)}_{L^q}\right) \nonumber\\
&\quad+\int_0^t(1+t-\tau)^{-\frac{3}{2}\left(\frac{1}{r}-\frac{1}{q}\right)-\frac{\ell}{2}-\frac{1}{2}}\left(\norm{(\na^{-1}N^1,N^2)(\tau)}_{L^r}+ \norm{\na^{\ell}(N^1,N^2)(\tau)}_{L^q}\right)\,d\tau
\end{align}
and
\begin{align}\label{u l}
\norm{\na^{\ell}u(t)}_{L^q}&\ls(1+t)^{-\frac{3}{2}\left(\frac{1}{p}-\frac{1}{q}\right)-\frac\ell2}\left(\norm{(\na^{-1}\varrho_0,u_0)}_{L^p}+ \norm{\na^{\ell}(\varrho_0,u_0)}_{L^q}\right) \nonumber\\
&\quad+\int_0^t(1+t-\tau)^{-\frac{3}{2}\left(\frac{1}{r}-\frac{1}{q}\right)-\frac\ell2}\left(\norm{(\na^{-1}N^1,N^2)(\tau)}_{L^r}+ \norm{\na^{\ell}(N^1,N^2)(\tau)}_{L^q}\right)\,d\tau.
\end{align}
\end{Proposition}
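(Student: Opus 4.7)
The plan is to read off Proposition \ref{mingti wu} as a direct consequence of the Duhamel formula \eqref{Duhamel} combined with the linear semigroup estimates in Lemma \ref{llll}, used twice: once on the free piece with exponent $p$, and once on the forcing piece with exponent $r$, integrated in time.

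First I would apply $\na^\ell$ to \eqref{Duhamel}, take $L^q$ norms of each component, and use the triangle inequality to split
\begin{align*}
\norm{\na^\ell \varrho(t)}_{L^q} &\le \norm{\na^\ell e^{-t\mathbb{A}}(\varrho_0,u_0)^{(1)}}_{L^q}
+ \int_0^t \norm{\na^\ell e^{-(t-\tau)\mathbb{A}} (N^1,N^2)(\tau)^{(1)}}_{L^q}\,d\tau,
\end{align*}
and likewise for $u$. To the first term I would apply \eqref{linear decay1} with data $(\varrho_0,u_0)$ and exponent $p$, which yields exactly the first line of \eqref{varrho l-1}. To the second, for each fixed $\tau\in[0,t]$ I would view $(N^1,N^2)(\tau)$ as the initial data for the semigroup evolving over time $t-\tau$, and apply \eqref{linear decay1} again with exponent $r$; this gives the pointwise-in-$\tau$ bound
\begin{align*}
\norm{\na^\ell e^{-(t-\tau)\mathbb{A}}(N^1,N^2)(\tau)^{(1)}}_{L^q}
\ls (1+t-\tau)^{-\frac{3}{2}(\frac1r-\frac1q)-\frac{\ell}{2}-\frac{1}{2}}
\left(\norm{(\na^{-1}N^1,N^2)(\tau)}_{L^r}+\norm{\na^\ell(N^1,N^2)(\tau)}_{L^q}\right),
\end{align*}
and integrating in $\tau$ produces the second line of \eqref{varrho l-1}. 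The estimate \eqref{u l} is proved in exactly the same way, substituting \eqref{linear decay2} for \eqref{linear decay1} so that the exponent $-\ell/2-1/2$ is replaced by $-\ell/2$.

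There is really no obstacle here beyond bookkeeping: the whole statement is a corollary of Lemma \ref{llll} once one recognizes that the Duhamel integrand $e^{-(t-\tau)\mathbb{A}}(N^1,N^2)(\tau)$ is itself a solution of the linearized system with initial datum $(N^1,N^2)(\tau)$, so that Lemma \ref{llll} applies verbatim with $\varrho_0$ replaced by $N^1(\tau)$ and $u_0$ by $N^2(\tau)$. One minor point worth checking in writing is that the admissible ranges $1\le p,r\le 2$ and $q\ge 2$ are preserved throughout, and that the quantities $\norm{\na^{-1}N^1(\tau)}_{L^r}$ appearing in the integrand are exactly the $\na^{-1}$-of-first-component objects required by \eqref{linear decay1}--\eqref{linear decay2}; these are finite at each $\tau$ since $N^1=-\diver((\varrho+\rho_s-\bar\rho)u)$ is already in divergence form, so $\na^{-1}N^1$ is morally $(\varrho+\rho_s-\bar\rho)u$ and will later be controlled by the energy. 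The actual convergence of the integrals and the extraction of effective decay rates is deferred to the subsequent sections, where one substitutes the a priori bounds on $N^1,N^2$ into \eqref{varrho l-1}--\eqref{u l}; at the level of Proposition \ref{mingti wu} itself, only the pointwise semigroup bound is needed.
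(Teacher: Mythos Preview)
Your proposal is correct and matches the paper's approach exactly: the paper simply states that Proposition \ref{mingti wu} follows by ``applying Lemma \ref{llll} to \eqref{Duhamel},'' which is precisely the Duhamel-plus-semigroup argument you have spelled out. Your added remarks about the admissible ranges of $p,r,q$ and the divergence form of $N^1$ are accurate and make explicit what the paper leaves implicit.
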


Finally, we record the following estimates of nonlinear terms appeared in \eqref{varrho l-1}--\eqref{u l}.
\begin{lemma}\label{mingti liu}
It holds that for $1< r\le 2$,
\begin{align}\label{N Lr}
\norm{(\na^{-1}N^1,N^2)}_{L^r}
&\ls\left(\de+\norm{ \rho_s-\bar\rho }_{W^{1,r}}\right)\left(\norm{u}_{L^\infty}+\norm{\na\varrho}_{H^2}+\norm{\na^2u}_{H^1}\right)\nonumber
\\&\quad+\norm{\varrho}_{L^2}\norm{\na^{3-\frac3r}u}_{L^2}+\norm{u}_{L^2}\norm{\na^{4-\frac3r}u}_{L^2};
\end{align}
for $0\le \ell\le 3/2$,
\begin{align}\label{N L2 l}
\norm{\na^{\ell}(N^1,N^2)}_{L^2}\ls\de\left(\norm{u}_{L^\infty}+\norm{\na\varrho}_{H^2}+\norm{\na^2u}_{H^2}\right);
\end{align}
and
\begin{align}\label{N Linfty}
\norm{(N^1,N^2)}_{L^\infty}\ls\de\left(\norm{u}_{L^\infty}+\norm{\na\varrho}_{H^2}+\norm{\na^2u}_{H^2}\right).
\end{align}
\end{lemma}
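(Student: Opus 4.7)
The plan is to bound each summand of $N^1$ and $N^2$ separately, relying on three core ingredients: (i) boundedness of the Riesz-type operator $\na^{-1}\diver$ on $L^r$ for $1<r<\infty$, which converts $N^1$ into a pointwise product; (ii) the Sobolev embedding $\dot H^s(\r3)\hookrightarrow L^{2r/(2-r)}(\r3)$ with $s=3-3/r$; (iii) the product and commutator estimates in \eqref{product estimate} together with the nonlinear bound $\mathcal R=O(\varrho^2)$ supplied by \eqref{hhh}--\eqref{hhhggg} and Lemma \ref{A3}, plus the a priori smallness \eqref{a priori2}.

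For \eqref{N Lr} I would first write $\na^{-1}N^1=-\na^{-1}\diver((\varrho+\rho_s-\bar\rho)u)$ and use the Calder\'on--Zygmund bound to reduce to $\norm{(\varrho+\rho_s-\bar\rho)u}_{L^r}$. Splitting and applying H\"older and Sobolev gives
\begin{align*}
\norm{\varrho u}_{L^r}&\le \norm{\varrho}_{L^2}\norm{u}_{L^{2r/(2-r)}}\ls \norm{\varrho}_{L^2}\norm{\na^{3-\frac3r}u}_{L^2},\\
\norm{(\rho_s-\bar\rho)u}_{L^r}&\le \norm{\rho_s-\bar\rho}_{L^r}\norm{u}_{L^\infty},
\end{align*}
accounting for two of the terms on the right of \eqref{N Lr}. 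Each piece of $N^2$ is handled similarly: $u\cdot\na u$ produces $\norm{u}_{L^2}\norm{\na^{4-\frac3r}u}_{L^2}$; the pressure remainder $\na\mathcal R\sim \varrho\,\na\varrho$ is bounded by $\norm{\varrho}_{L^{2r/(2-r)}}\norm{\na\varrho}_{L^2}\ls \delta\norm{\na\varrho}_{H^2}$ after using $\norm{\varrho}_{L^{2r/(2-r)}}\ls\norm{\na^{3-3/r}\varrho}_{L^2}$; the stationary correction $\na((h'(\rho_s)-h'(\bar\rho))\varrho)$ is controlled by the Leibniz rule, placing $\rho_s-\bar\rho$ in $W^{1,r}$ and $\varrho$ in $H^2$; and the viscosity remainder $\bigl(\tfrac{1}{\varrho+\rho_s}-\tfrac{1}{\bar\rho}\bigr)(\mu\De u+(\mu+\mu')\na\diver u)$ is dominated by $(\delta+\norm{\rho_s-\bar\rho}_{W^{1,r}})\norm{\na^2u}_{H^1}$, using that the multiplicative factor is small in $L^\infty$.

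For \eqref{N L2 l} with $0\le\ell\le 3/2$, I would apply the fractional product estimate \eqref{product estimate} to every summand of $N^1$ and $N^2$, distributing the $\ell$ derivatives between the two factors so that the ``small'' factor (one of $\varrho$, $u$, $\rho_s-\bar\rho$, or $\tfrac{1}{\varrho+\rho_s}-\tfrac{1}{\bar\rho}$) absorbs the smallness parameter $\delta$ via \eqref{a priori2}, while the remaining factor is absorbed into $\norm{u}_{L^\infty}+\norm{\na\varrho}_{H^2}+\norm{\na^2u}_{H^2}$; the restriction $\ell\le 3/2$ is exactly what permits Sobolev interpolation to recast $\norm{\na^\ell(\cdot)}_{L^2}$ in terms of these quantities. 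Estimate \eqref{N Linfty} then follows from the same product rules, placing $\varrho$, $\rho_s-\bar\rho$ or $\tfrac{1}{\varrho+\rho_s}-\tfrac{1}{\bar\rho}$ in $L^\infty$ (each of order $\delta$) and using Sobolev embeddings for the remaining derivatives of $u$ or $\varrho$.

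The main obstacle is the fractional $L^r$ bound \eqref{N Lr}: the exponent $3-3/r$ arises from $\dot H^s\hookrightarrow L^{2r/(2-r)}$ with $s=3\bigl(\tfrac{1}{2}-\tfrac{2-r}{2r}\bigr)=3-3/r$, and the bookkeeping needed to split each nonlinear term into (smallness)$\times$(high-derivative) plus a genuine low-derivative $L^r$ contribution (only for the stationary deviation $\rho_s-\bar\rho$) is the delicate part of the argument; it is also the reason the estimate is not a single clean product bound but rather a sum of two distinct flavors.
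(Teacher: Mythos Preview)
Your outline matches the paper's proof almost term by term: the Calder\'on--Zygmund bound for $\na^{-1}\diver$ on $L^r$, the H\"older splitting $L^r=L^2\cdot L^{2r/(2-r)}$ combined with the embedding $\dot H^{3-3/r}\hookrightarrow L^{2r/(2-r)}$ for the $\varrho u$ and $u\cdot\na u$ terms, the identity \eqref{r3} for $\na\mathcal R$, the Leibniz rule for the stationary correction $\na((h'(\rho_s)-h'(\bar\rho))\varrho)$, and the product estimate \eqref{product estimate} for \eqref{N L2 l} and \eqref{N Linfty}.

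One point needs correcting. For the viscosity remainder in \eqref{N Lr} you say you place the prefactor $\frac{1}{\varrho+\rho_s}-\frac{1}{\bar\rho}$ in $L^\infty$; but that leaves $\norm{\na^2u}_{L^r}$ with $r<2$, which is \emph{not} controlled by $\norm{\na^2u}_{H^1}$ or by any Sobolev norm you have available (there is no embedding $H^s(\r3)\hookrightarrow L^r$ for $r<2$). The paper instead puts the prefactor in $L^2$---this is $\lesssim\de$ by \eqref{a priori2}, since both $\varrho$ and $\rho_s-\bar\rho$ lie in $L^2$ with small norm---and places $\na^2u$ in $L^{2r/(2-r)}$, which embeds from $H^1$ when $2r/(2-r)\le 6$, i.e.\ $r\le 3/2$ (the range actually used downstream). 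With that adjustment your argument is complete and coincides with the paper's; note also that the $\norm{\rho_s-\bar\rho}_{W^{1,r}}$ contribution in \eqref{N Lr} comes solely from the stationary correction term, not from the viscosity remainder.
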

\begin{proof}
We will estimate the nonlinear terms term by term. First, for $1< r\le 2$, by the singular integral theory \cite{S}, the identity \eqref{r3} in the proof of Lemma \ref{A3} and H\"{o}lder's and Sobolev's inequalities, we obtain
\begin{align}
&\norm{\na^{-1}\diver((\varrho+\rho_s-\bar\rho)  u)}_{L^r}\ls\norm{ (\varrho+\rho_s-\bar\rho) u }_{L^r}\nonumber
\\&\quad\ls\norm{\varrho}_{L^2}\norm{u}_{L^{\frac{1}{1/r-1/2}}}+\norm{ \rho_s-\bar\rho}_{L^r}\norm{u}_{L^\infty} \ls\norm{\varrho}_{L^2}\norm{\na^{3-\frac3r}u}_{L^2}+\norm{ \rho_s-\bar\rho}_{L^r}\norm{u}_{L^\infty};\label{N yi}\\
&\norm{u\cdot\na u}_{L^r}\ls\norm{u}_{L^2}\norm{\na u}_{L^{\frac{1}{1/r-1/2}}}\ls\norm{u}_{L^2}\norm{\na^{4-\frac3r}u}_{L^2};\label{N er}\\
&\norm{\na\mathcal{R}}_{L^r}\ls\norm{h''(\rho_s)\varrho\na\varrho}_{L^r}+\norm{\mathcal{R}(h')\left(\na\varrho+\na\rho_s\right)}_{L^r}\nonumber\\
&\quad\ls\norm{ \varrho}_{L^{\frac{1}{1/r-1/2}}}\norm{\na \varrho}_{L^2}+\norm{\varrho}_{L^{\frac{1}{1/r-1/2}}}\norm{\varrho}_{L^\infty}\left(\norm{\na\varrho}_{L^2}+\norm{\na\rho_s}_{L^2}\right)\ls\de\norm{\na\varrho}_{H^2};\label{N san}\\
&\norm{\na((h'(\rho_s)-h'(\bar\rho))\varrho)}_{L^r}\ls\norm{h'(\rho_s)-h'(\bar\rho)}_{L^r}\norm{\na \varrho}_{L^\infty}+\norm{\na(h'(\rho_s)-h'(\bar\rho))}_{L^r}\norm{\varrho}_{L^\infty}\nonumber\\
&\quad\ls \norm{\rho_s-\bar\rho}_{W^{1,r}} \norm{\na\varrho}_{H^2};\label{N si}\\
&\norm{\left(\frac{1}{\varrho+\rho_s}-\frac{1}{\bar\rho}\right)\na^2u}_{L^r}\ls\norm{\frac{1}{\varrho+\rho_s}-\frac{1}{\bar\rho}}_{L^2}
\norm{\na^2u}_{L^{\frac{1}{1/r-1/2}}}\ls\de\norm{\na^2u}_{H^1}.\label{N wu}
\end{align}
These estimates \eqref{N yi}--\eqref{N wu} give \eqref{N Lr}.

Next, for $0\le \ell\le 3/2$, by the product estimates \eqref{product estimate}, the nonlinear estimates \eqref{A31} and Sobolev's inequality, we obtain
\begin{align}
&\norm{\na^{\ell}\diver((\varrho+\rho_s-\bar\rho)u)}_{L^2}\ls\norm{\varrho+\rho_s-\bar\rho}_{L^3}\norm{\na^{\ell+1}u}_{L^6}+\norm{\na^{\ell+1}(\varrho+\rho_s-\bar\rho)}_{L^2}\norm{u}_{L^\infty}\nonumber\\
&\quad\ls\de\left(\norm{u}_{L^\infty}+\norm{\na^{2}u}_{H^2}\right);\label{N shi}\\
&\norm{\na^{\ell}(u\cdot\na u)}_{L^2}\ls\norm{\na^{\ell}u}_{L^3}\norm{\na u}_{L^6}+\norm{u}_{L^3}\norm{\na^{\ell+1}u}_{L^6}\ls\de\norm{\na^2u}_{H^2};\\
&\norm{\na^{\ell}\na\mathcal{R}}_{L^2}\ls\de\left(\norm{\na\varrho}_{L^2}+\norm{\na^{\ell+1}\varrho}_{L^2}\right)\ls\de\norm{\na\varrho}_{H^2};\\
&\norm{\na^{\ell}\na((h'(\rho_s)-h'(\bar\rho))\varrho)}_{L^2}\nonumber\\
&\quad\ls\norm{\na^{\ell+1}(h'(\rho_s)-h'(\bar\rho))}_{L^3}\norm{\varrho}_{L^6}
+\norm{h'(\rho_s)-h'(\bar\rho)}_{L^\infty}\norm{\na^{\ell+1}\varrho}_{L^2}\ls\de\norm{\na\varrho}_{H^2};\\
&\norm{\na^{\ell}\left(\left(\frac{1}{\varrho+\rho_s}-\frac{1}{\bar\rho}\right)\na^2u\right)}_{L^2}\nonumber\\
&\quad\ls\norm{\na^{\ell}\left(\frac{1}{\varrho+\rho_s}-\frac{1}{\bar\rho}\right)}_{L^\infty}\norm{\na^2u}_{L^2}
+\norm{\frac{1}{\varrho+\rho_s}-\frac{1}{\bar\rho}}_{L^\infty}\norm{\na^{\ell+2}u}_{L^2}
\ls\de\norm{\na^2u}_{H^2}.\label{N shisi}
\end{align}
These estimates \eqref{N shi}--\eqref{N shisi} yield \eqref{N L2 l}.

Note that the estimate \eqref{N Linfty} can be obtained in a similar way.
\end{proof}

\subsection{Proof of Theorem \ref{decay}}

{{In this subsection, we will prove Theorem \ref{decay}. Let $k\ge3$ and $0\le\ell\le 3/2$.
Adding $\norm{\na^\ell(u,\na\Phi)}_{L^2}^2$ to both sides of the estimates \eqref{energy 5decay} of Proposition \ref{proposition decay}, we obtain
\begin{align}\label{232}
&\frac{d}{dt}\mathcal{E}_\ell^k+\norm{\na^{\ell}\varrho}_{H^{k-\ell}}^2+\norm{\na^{\ell}u}_{H^{k-\ell+1}}^2+\norm{ \na^{\ell}\na\Phi }_{L^2}^2\nonumber
\\&\quad\ls\norm{ \na^{\ell}\na\Phi }_{L^2}^2+\norm{ \na^{\ell}u }_{L^2}^2+\norm{\varrho}_{L^\infty}^2
+\norm{u}_{L^\infty}^2.
\end{align}
Note that $\norm{ \na^{\ell}\na\Phi }_{L^2}^2$ is equivalent to $\norm{ \na^{\ell-1}\varrho }_{L^2}^2$.
Hence, we deduce from \eqref{232} that
\begin{align}\label{200}
\frac{d}{dt}\mathcal{E}_\ell^k+\lambda\mathcal{E}_\ell^k \ls \norm{ \na^{\ell-1}\varrho }_{L^2}^2+\norm{ \na^{\ell}u }_{L^2}^2+\norm{\varrho}_{L^\infty}^2
+\norm{u}_{L^\infty}^2
\end{align}
 for some constant $\lambda>0$.
By the Gronwall inequality, we obtain
\begin{align}\label{2proof1}
&\norm{\na^{\ell-1} \varrho (t)}_{H^{k+1-\ell}}^2+\norm{\na^\ell u (t)}_{H^{k-\ell}}^2\ls e^{-\lambda t}\left(\norm{\na^\ell(\varrho_0,u_0) }_{H^{k-\ell}}^2+\norm{\na\Phi_0}_{L^2}^2\right)\nonumber\\
&\qquad\qquad+\int_0^te^{-\lambda(t-\tau)}\left( \norm{ \na^{\ell-1}\varrho(\tau) }_{L^2}^2+\norm{ \na^{\ell}u(\tau) }_{L^2}^2+\norm{\varrho(\tau)}_{L^\infty}^2
+\norm{u(\tau)}_{L^\infty}^2\right)\,d\tau.
\end{align}

We now prove \eqref{decay11}--\eqref{decay2}. So we let $k\ge 4$, $1< r<3/2$ and $1\le p<3/2$. For simplicity of notations, we denote
\begin{align}
K_0:=\norm{(\na^{-1}\varrho_0,u_0)}_{L^p}+\norm{(\varrho_0,u_0)}_{H^k}+\norm{\na\Phi_0}_{L^2},
\end{align}
and we define
\begin{align}
\zeta:=\frac{3}{2}\left(\frac{1}{\max\{p,r\}}-\frac{1}{2}\right)\text{ and }\tilde{\de}:=\de+\norm{\rho_s-\bar\rho }_{W^{1,r}}.
\end{align}
It turns out that we have to distinguish our arguments by the value of $r$.

{\it Case 1:  $6/5\le r<3/2$.} In this case, we define
\begin{align}\label{hua L}
\mathcal{L}(t):=\norm{\na^{1/2}\varrho(t) }_{L^2}+\norm{ \na^{3/2}u(t) }_{L^2}+\norm{\varrho(t)}_{L^\infty}
+\norm{u(t)}_{L^\infty},
\end{align}
\begin{align}\label{hua M}
\mathcal{M}(t):=\norm{\na^{1/2}\varrho(t) }_{H^{k-1/2}}+\norm{ \na^{3/2}u(t) }_{H^{k-3/2}},
\end{align}
and
\begin{align}\label{hua N}
\mathcal{N}(t):=\sup_{0\le\tau\le t}\left((1+\tau)^{\zeta+\frac{3}{4}}\left(\mathcal{L}(\tau)+\mathcal{M}(\tau) \right)
+(1+\tau)^{\zeta+\frac{1}{2}}\norm{\varrho(\tau)}_{L^2}+(1+\tau)^{\zeta}\norm{u(\tau)}_{L^2} \right).
\end{align}
We take $\ell=3/2$ in \eqref{2proof1} to have, in view of \eqref{hua L}--\eqref{hua M},
\begin{align}\label{hua M estimate}
\mathcal{M}^2(t) \ls e^{-\la t}K_0^2+\int_0^te^{-\la(t-\tau)}\mathcal{L}^2(\tau)\,d\tau.
\end{align}

We now estimate the time decay rates of $\mathcal{L}(t)$ by applying the linear decay estimates. By the estimates \eqref{varrho l-1} with $\ell=1/2$ and $q=2$ of Proposition \ref{mingti wu} and using the nonlinear estimates \eqref{N Lr}--\eqref{N L2 l}, in view of \eqref{hua L}--\eqref{hua M},  we obtain
\begin{align}\label{na 1/2 varrho nonlinear111}
&\norm{\na^{1/2}\varrho(t)}_{L^2}\ls(1+t)^{-\frac{3}{2}\left(\frac{1}{p}-\frac{1}{2}\right)-\frac{3}{4}}K_0+\int_0^t(1+t-\tau)^{-\frac{3}{2}\left(\frac{1}{r}-\frac{1}{2}\right)-\frac{3}{4}} \tilde\de (\mathcal{L}+\mathcal{M})(\tau)\,d\tau\nonumber\\
&   +\int_0^t(1+t-\tau)^{-\frac{3}{2}\left(\frac{1}{r}-\frac{1}{2}\right)-\frac{3}{4}}\left(\norm{\varrho(\tau)}_{L^2}\norm{\na^{3-\frac{3}{r}}u(\tau)}_{L^2}
+\norm{u(\tau)}_{L^2}\norm{\na^{4-\frac{3}{r}}u(\tau)}_{L^2}\right)\,d\tau .
\end{align}
By the interpolation, in view of \eqref{hua N}, we estimate
\begin{align}\label{104}
&\norm{\varrho(\tau)}_{L^2}\norm{\na^{3-\frac{3}{r}}u(\tau)}_{L^2}+\norm{u(\tau)}_{L^2}\norm{\na^{4-\frac{3}{r}}u(\tau)}_{L^2}\nonumber\\
&\quad\ls \norm{\na^{\vartheta}\varrho(\tau)}_{L^2}^{2-\frac2r}\norm{\na^{1/2}\varrho(\tau)}_{L^2}^{\frac2r-1}\norm{u(\tau)}_{L^2}^{\frac2r-1}
\norm{\na^{3/2} u(\tau)}_{L^2}^{2-\frac2r}+\norm{u(\tau)}_{L^2}\norm{\na^{4-\frac{3}{r}}u(\tau)}_{L^2} \nonumber\\
&\quad\ls \delta^{2-\frac2r} (1+\tau)^{\left(-\zeta-\frac{3}{4}\right)(\frac2r-1)} {\mathcal{N}(t)}^{\frac2r-1} \delta^{\frac2r-1} (1+\tau)^{\left(-\zeta-\frac{3}{4}\right)(2-\frac2r)} {\mathcal{N}(t)}^{2-\frac2r}+\delta(1+\tau)^{-\zeta-\frac{3}{4}} {\mathcal{N}(t)}\nonumber\\
&\quad\ls\de(1+\tau)^{-\zeta-\frac{3}{4}} \mathcal{N}(t).
\end{align}
Here we have used the facts that $\vartheta=(r-2)/(4r-4)\ge -1$ and $4-3/r\ge3/2$ since $r\ge6/5$.
Hence, plugging the estimates \eqref{104} into \eqref{na 1/2 varrho nonlinear111}, in view of \eqref{hua N}, we have
\begin{align}\label{na 1/2 varrho nonlinear}
\norm{\na^{1/2}\varrho(t)}_{L^2}
&\ls(1+t)^{-\frac{3}{2}\left(\frac{1}{p}-\frac{1}{2}\right)-\frac{3}{4} }K_0+\tilde{\de}\int_0^t(1+t-\tau)^{-\frac{3}{2}\left(\frac{1}{r}-\frac{1}{2}\right)-\frac{3}{4}}(1+\tau)^{-\zeta-\frac34}\mathcal{N}(t)\,d\tau\nonumber\\
&\ls (1+t)^{-\zeta-\frac{3}{4}}\left(K_0+\tilde{\de}\mathcal{N}(t)\right).
\end{align}
Here we have used the fact $\frac{3}{2}\left(\frac{1}{r}-\frac{1}{2}\right)+\frac{3}{4}>1$ since $r<\frac32$. Similarly, by the estimates \eqref{u l} with $\ell=3/2$ and $q=2$, and \eqref{varrho l-1}--\eqref{u l} with $\ell=0$ and $q=\infty$ of Proposition \ref{mingti wu} respectively, using the nonlinear estimates \eqref{N Lr}--\eqref{N Linfty}, we deduce
\begin{align}\label{u Linfty' nonlinear}
\norm{\na^{3/2}u(t)}_{L^2}+\norm{\varrho(t)}_{L^\infty}+\norm{u(t)}_{L^\infty}\ls (1+t)^{-\zeta-\frac{3}{4}}\left(K_0+\tilde{\de}\mathcal{N}(t)\right).
\end{align}
We thus deduce from \eqref{na 1/2 varrho nonlinear}--\eqref{u Linfty' nonlinear} that
\begin{align}\label{hua L estimate}
\mathcal{L}(t)\ls (1+t)^{-\zeta-\frac34}\left(K_0+\tilde{\de}\mathcal{N}(t)\right).
\end{align}
Now we substitute \eqref{hua L estimate} into \eqref{hua M estimate} to obtain
\begin{align}\label{hua M estimate11}
\mathcal{M}^2(t)& \ls e^{-\la t}K_0^2+\int_0^te^{-\lambda(t-\tau)} (1+\tau)^{-2\zeta-\frac32}(K_0^2+\tilde{\de}^2\mathcal{N}^2(t))\,d\tau.\nonumber\\
&\ls  (1+t)^{-2\zeta-\frac32}(K_0^2+\tilde{\de}^2\mathcal{N}^2(t)).
\end{align}

Finally, by the estimates \eqref{varrho l-1}--\eqref{u l} with $\ell=0$ and $q=2$ of Proposition \ref{mingti wu}, using the estimates \eqref{N Lr}--\eqref{N L2 l} with $\ell=0$ and \eqref{104}, in view of \eqref{hua N}, we obtain
\begin{align}\label{varrho L2 estimate}
\norm{\varrho(t)}_{L^2}
&\ls(1+t)^{-\frac{3}{2}\left(\frac{1}{p}-\frac{1}{2}\right)-\frac{1}{2} }K_0+\tilde{\de}\int_0^t(1+t-\tau)^{-\frac{3}{2}\left(\frac{1}{r}-\frac{1}{2}\right)-\frac{1}{2}}  (1+\tau)^{-\zeta-\frac34}\mathcal{N}(t)\,d\tau\nonumber\\
&\ls (1+t)^{-\zeta-\frac{1}{2}}\left(K_0+\tilde{\de}\mathcal{N}(t)\right)
\end{align}and
\begin{align}\label{u L2 estimate}
\norm{u(t)}_{L^2}
&\ls(1+t)^{-\frac{3}{2}\left(\frac{1}{p}-\frac{1}{2}\right) }K_0+\tilde{\de}\int_0^t(1+t-\tau)^{-\frac{3}{2}\left(\frac{1}{r}-\frac{1}{2}\right)}  (1+\tau)^{-\zeta-\frac34}\mathcal{N}(t)\,d\tau\nonumber\\
& \ls  (1+t)^{-\zeta}\left(K_0+\tilde{\de}\mathcal{N}(t)\right).
\end{align}
Note that we have used the fact $\zeta+\frac34>1$ since $p,r<3/2$ so that
\begin{align}
 \int_0^t(1+t-\tau)^{-\frac{3}{2}\left(\frac{1}{r}-\frac{1}{2}\right)-\frac{1}{2}}  (1+\tau)^{-\zeta-\frac34}\,d\tau\ls(1+t)^{-\frac{3}{2}\left(\frac{1}{r}-\frac{1}{2}\right)-\frac{1}{2}}
\end{align}
and
\begin{align}
\int_0^t(1+t-\tau)^{-\frac{3}{2}\left(\frac{1}{r}-\frac{1}{2}\right)}  (1+\tau)^{-\zeta-\frac34}\,d\tau\ls(1+t)^{-\frac{3}{2}\left(\frac{1}{r}-\frac{1}{2}\right)}.
\end{align}
By the definition \eqref{hua N} of $\mathcal{N}(t)$, we deduce from \eqref{hua L estimate}--\eqref{u L2 estimate} that
\begin{align}
\mathcal{N} (t)\ls K_0+\tilde{\de}\mathcal{N}(t).
\end{align}
This implies
\begin{align}
\mathcal{N}(t)\ls K_0
\end{align}
since $\tilde{\de}$ is small by Proposition \ref{prop}. This in turn together with the interpolation gives \eqref{decay11}--\eqref{decay2} for $6/5\le r<3/2$ by taking $C_0=K_0$.

{\it Case 2:  $1< r<6/5$.} In this case, we define
\begin{align}\label{hua H}
\mathcal{H}(t):=\norm{\varrho(t) }_{L^2}+\norm{ \na u(t) }_{L^2}+\norm{\varrho(t)}_{L^\infty}
+\norm{u(t)}_{L^\infty},
\end{align}
\begin{align}\label{hua J}
\mathcal{J}(t):=\norm{\varrho(t) }_{H^{k}}+\norm{ \na u(t) }_{H^{k-1}},
\end{align}
and
\begin{align}\label{hua K}
\mathcal{K}(t):=\sup_{0\le\tau\le t}\left((1+\tau)^{\zeta+\frac{1}{2}}\left(\mathcal{H}(\tau)+\mathcal{J}(\tau) \right)
\right).
\end{align}
We take $\ell=1$ in \eqref{2proof1} to have, in view of \eqref{hua H}--\eqref{hua J},
\begin{align}\label{301}
\mathcal{J}^2(t) \ls e^{-\la t}K_0^2+\int_0^te^{-\la(t-\tau)}\mathcal{H}^2(\tau)\,d\tau.
\end{align}

We now estimate the time decay rates of $\mathcal{H}(t)$ by applying the linear decay estimates. By the estimates \eqref{varrho l-1} with $\ell=0$ and $q=2$ of Proposition \ref{mingti wu} and using the nonlinear estimates \eqref{N Lr}--\eqref{N L2 l}, in view of \eqref{hua H}--\eqref{hua J},  we obtain
\begin{align}\label{varrho L2 nonlinear}
&\norm{\varrho(t)}_{L^2}\ls(1+t)^{-\frac{3}{2}\left(\frac{1}{p}-\frac{1}{2}\right)-\frac{1}{2}}K_0+\int_0^t(1+t-\tau)^{-\frac{3}{2}\left(\frac{1}{r}-\frac{1}{2}\right)-\frac{1}{2}} \tilde\de (\mathcal{H}+\mathcal{J})(\tau)\,d\tau\nonumber\\
&   +\int_0^t(1+t-\tau)^{-\frac{3}{2}\left(\frac{1}{r}-\frac{1}{2}\right)-\frac{1}{2}}\left(\norm{\varrho(\tau)}_{L^2}\norm{\na^{3-\frac{3}{r}}u(\tau)}_{L^2}
+\norm{u(\tau)}_{L^2}\norm{\na^{4-\frac{3}{r}}u(\tau)}_{L^2}\right)\,d\tau .
\end{align}
Note that
\begin{align}\label{2323}
\norm{\varrho}_{L^2}\norm{\na^{3-\frac3r}u}_{L^2}+\norm{u}_{L^2}\norm{\na^{4-\frac3r}u}_{L^2}\ls\de\left(\norm{\varrho }_{L^2}+\norm{ \na u}_{H^1}\right)\ls\de \mathcal{J}.
\end{align}
Hence, we have
\begin{align}\label{na 0 varrho nonlinear}
\norm{\varrho(t)}_{L^2}&\ls(1+t)^{-\frac{3}{2}\left(\frac{1}{p}-\frac{1}{2}\right)-\frac{1}{2}}K_0+\int_0^t(1+t-\tau)^{-\frac{3}{2}\left(\frac{1}{r}-\frac{1}{2}\right)-\frac{1}{2}} \tilde\de (\mathcal{H}+\mathcal{J})(\tau)\,d\tau\nonumber\\
&\ls(1+t)^{-\frac{3}{2}\left(\frac{1}{p}-\frac{1}{2}\right)-\frac{1}{2} }K_0+\tilde{\de}\int_0^t(1+t-\tau)^{-\frac{3}{2}\left(\frac{1}{r}-\frac{1}{2}\right)-\frac{1}{2}}  (1+\tau)^{-\zeta-\frac12}\mathcal{K}(t)\,d\tau\nonumber\\
&\ls (1+t)^{-\zeta-\frac{1}{2}}\left(K_0+\tilde{\de}\mathcal{K}(t)\right).
\end{align}
Here we have used the fact $\frac{3}{2}\left(\frac{1}{r}-\frac{1}{2}\right)+\frac{1}{2}>1$ since $r<6/5$. Similarly, by the estimates \eqref{u l} with $\ell=1$ and $q=2$, and \eqref{varrho l-1}--\eqref{u l} with $\ell=0$ and $q=\infty$ of Proposition \ref{mingti wu} respectively, using the nonlinear estimates \eqref{N Lr}--\eqref{N Linfty}, we deduce
\begin{align}\label{u Linfty nonlinear}
\norm{\na u(t)}_{L^2}+\norm{\varrho(t)}_{L^\infty}+\norm{u(t)}_{L^\infty}\ls (1+t)^{-\zeta-\frac{1}{2}}\left(K_0+\tilde{\de}\mathcal{K}(t)\right).
\end{align}
We thus deduce from \eqref{na 0 varrho nonlinear}--\eqref{u Linfty nonlinear} that
\begin{align}\label{hua H estimate}
\mathcal{H}(t)\ls  (1+t)^{-\zeta-\frac12}\left(K_0+\tilde{\de}\mathcal{K}(t)\right).
\end{align}
Now we substitute \eqref{hua H estimate} into \eqref{301} to obtain
\begin{align}\label{hua J estimate}
\mathcal{J}^2(t)&\ls e^{-\la t}K_0^2+\int_0^te^{-\lambda(t-\tau)} (1+\tau)^{-2\zeta-1}(K_0^2+\tilde{\de}^2\mathcal{K}^2(t))\,d\tau.\nonumber\\
&\ls  (1+t)^{-2\zeta-1}(K_0^2+\tilde{\de}^2\mathcal{K}^2(t)).
\end{align}
By the definition \eqref{hua K} of $\mathcal{K}(t)$, we deduce from \eqref{hua H estimate}--\eqref{hua J estimate} that
\begin{align}
\mathcal{K} (t)\ls K_0+\tilde{\de}\mathcal{K}(t).
\end{align}
This implies, since $\tilde{\de}$ is small,
\begin{align}\label{444}
\mathcal{K}(t)\ls K_0.
\end{align}

Finally, by the estimates \eqref{u l} with $\ell=0$ and $q=2$ of Proposition \ref{mingti wu}, using the estimates \eqref{N Lr}--\eqref{N L2 l} with $\ell=0$ and \eqref{2323}, in view of \eqref{hua K}, by \eqref{444}, we obtain
\begin{align}\label{u estimate}
\norm{u(t)}_{L^2}
&\ls(1+t)^{-\frac{3}{2}\left(\frac{1}{p}-\frac{1}{2}\right) }K_0+\tilde{\de}\int_0^t(1+t-\tau)^{-\frac{3}{2}\left(\frac{1}{r}-\frac{1}{2}\right)}  (1+\tau)^{-\zeta-\frac34}\mathcal{K}(t)\,d\tau\nonumber\\
& \ls   K_0 (1+t)^{-\zeta}.
\end{align}
Note that \eqref{444} implies
\begin{align}\label{101}
\norm{\varrho(t)}_{H^k}+\norm{\na u(t)}_{H^{k-1}}\ls K_0(1+t)^{-\zeta-\frac{1}{2}}.
\end{align}
That is, we have proved \eqref{decay11} for $\ell=0$ and \eqref{decay1} for $0\le \ell\le 1$ by the interpolation. To prove the remaining decay estimates in \eqref{decay11}--\eqref{decay2}, we may now employ the arguments used in {\it Case 1}. Indeed, since $0<3-3/r<1$ and $4-3/r>1$, by the interpolation, we deduce from \eqref{u estimate}--\eqref{101} that
\begin{align}\label{103}
&\norm{\varrho(\tau)}_{L^2}\norm{\na^{3-\frac{3}{r}}u(\tau)}_{L^2}+\norm{u(\tau)}_{L^2}\norm{\na^{4-\frac{3}{r}}u(\tau)}_{L^2}\nonumber\\
&\quad\ls K_0^2(1+\tau)^{-\zeta-\frac{1}{2}} (1+\tau)^{-\zeta-\frac{3-3/r}{2}}+K_0^2(1+\tau)^{-\zeta} (1+\tau)^{-\zeta-\frac12} \nonumber\\
&\quad\ls K_0^2(1+\tau)^{-\zeta-\frac{3}{4}}.
\end{align}
Here we have used the fact $2\zeta+1/2\ge \zeta+3/4$ since $p,r<3/2$. So by replacing the estimates \eqref{104} by the estimates \eqref{103} and then reproducing the arguments of {\it Case 1}, we may derive
\begin{align}
\mathcal{N}(t)\ls K_0+K_0^2,
\end{align}
where $\mathcal{N}(t)$ is defined by \eqref{hua N}. This in turn together with the interpolation gives \eqref{decay11}--\eqref{decay2} for $1< r<6/5$ by taking $C_0=K_0+K_0^2$.

Now in view of these two cases, the proof of Theorem \ref{decay} is completed.}}\hfill$\Box$

\appendix

\section{Analytic tools}\label{1section_appendix}
We recall the Sobolev interpolation of the Gagliardo-Nirenberg inequality.
\begin{lemma}\label{A1}
Let $2\le p\le \infty$ and $\alpha,\beta,\gamma\in \mathbb{R}$. Then we have
\begin{align}\label{A.1}
\norm{\na^\alpha f}_{L^p}\lesssim \norm{ \na^\beta f}_{L^2}^{1-\theta}
\norm{ \na^\gamma f}_{L^2}^{\theta}.
\end{align}
Here $0\le \theta\le 1$ (if $p=\infty$, then we require that $0<\theta<1$) and $\alpha$ satisfy
\begin{align*}
\alpha+3\left(\frac12-\frac{1}{p}\right)=\beta(1-\theta)+\gamma\theta.
\end{align*}
\end{lemma}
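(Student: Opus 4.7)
The plan is to reduce the inequality to two standard ingredients: (i) the Sobolev embedding $\dot H^{s}(\mathbb R^3)\hookrightarrow L^p(\mathbb R^3)$ for $2\le p<\infty$ with $s=3(1/2-1/p)$, and (ii) the homogeneous $L^2$-interpolation of fractional derivatives. Concretely, I would first apply Sobolev embedding to $\na^\alpha f$ to obtain
\begin{align*}
\norm{\na^\alpha f}_{L^p}\lesssim \norm{\na^{\alpha+3(1/2-1/p)} f}_{L^2}=\norm{\na^\sigma f}_{L^2},
\end{align*}
where by hypothesis $\sigma:=\alpha+3(1/2-1/p)=\beta(1-\theta)+\gamma\theta$. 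Thus the problem collapses to proving
\begin{align*}
\norm{\na^\sigma f}_{L^2}\lesssim \norm{\na^\beta f}_{L^2}^{1-\theta}\norm{\na^\gamma f}_{L^2}^{\theta},
\end{align*}
which follows directly from Plancherel's theorem and Hölder's inequality on the Fourier side: one writes $|\xi|^{2\sigma}=|\xi|^{2\beta(1-\theta)}|\xi|^{2\gamma\theta}$ and applies Hölder with exponents $1/(1-\theta)$ and $1/\theta$ to the measure $|\hat f(\xi)|^2\, d\xi$. This handles the full range $2\le p<\infty$ and arbitrary real $\alpha,\beta,\gamma$ satisfying the scaling relation, with $0\le\theta\le1$.

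For the endpoint $p=\infty$, where Sobolev embedding into $L^\infty$ fails at the critical index, I would instead use a Littlewood--Paley decomposition $f=\sum_{j\in\mathbb Z}\De_j f$. Bernstein's inequality yields $\norm{\na^\alpha \De_j f}_{L^\infty}\lesssim 2^{j(\alpha+3/2)}\norm{\De_j f}_{L^2}$, together with $\norm{\De_j f}_{L^2}\lesssim 2^{-j\beta}\norm{\na^\beta \De_j f}_{L^2}$ and similarly for $\gamma$. Splitting the sum in $j$ at a threshold $J$ chosen to balance the two terms (this forces $0<\theta<1$, explaining the strict inequality stated for $p=\infty$) and using $\ell^2\hookrightarrow\ell^1$ on the spectrally localised pieces produces
\begin{align*}
\norm{\na^\alpha f}_{L^\infty}\lesssim \sum_j 2^{j(\alpha+3/2)}\norm{\De_j f}_{L^2}\lesssim \norm{\na^\beta f}_{L^2}^{1-\theta}\norm{\na^\gamma f}_{L^2}^{\theta},
\end{align*}
after optimising $J$ (equivalently, interpolating the two geometric series).

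The step that requires the most care, and is the only genuine issue, is the interpretation of $\na^\alpha$ when $\alpha<0$ or $\alpha$ is non-integer: these must be read as homogeneous Fourier multipliers $|\xi|^\alpha$ acting on tempered distributions modulo polynomials, and the inequality is an a priori bound on the Schwartz class (or any dense subclass on which all three norms are finite). Since the Fourier-side proof in the first paragraph is symmetric in the sign and integrality of the exponents, and the Littlewood--Paley proof at $p=\infty$ uses only dyadic annular support of $\widehat{\De_j f}$, both arguments go through uniformly. The resulting estimate is then extended to general $f$ for which the right-hand side is finite by a standard density/monotone-convergence argument.
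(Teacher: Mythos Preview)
Your proof is correct and self-contained. The paper, however, does not give a proof at all: it simply cites Lemma~2.4 of \cite{GW} for the case $2\le p<\infty$ and Exercise~6.1.2 of \cite{Gla} for $p=\infty$. So your argument is genuinely more than what the paper offers.

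Your route for $2\le p<\infty$ (homogeneous Sobolev embedding into $L^p$, then Plancherel plus H\"older on the Fourier side) is the cleanest way to see the inequality and makes transparent why $\alpha,\beta,\gamma$ may be arbitrary reals. The Littlewood--Paley splitting at $p=\infty$ is likewise the standard mechanism, and you correctly identify why the restriction $0<\theta<1$ is forced there: the borderline embedding $\dot H^{3/2}(\mathbb R^3)\hookrightarrow L^\infty$ fails, so one cannot allow $\sigma=\alpha+3/2$ to equal either endpoint $\beta$ or $\gamma$. One small point worth making explicit in the $p=\infty$ argument: the splitting requires $\beta\neq\gamma$ (otherwise $\sigma=\beta=\gamma$ and the geometric series do not converge), but this is already implicit in the requirement $0<\theta<1$ together with the scaling relation. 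Your closing remark about interpreting $\nabla^\alpha$ as the homogeneous Fourier multiplier and proving the estimate first on a dense class is exactly the right caveat.
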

\begin{proof}
For the case $2\le p<\infty$, we refer to Lemma 2.4 in \cite{GW}; for the case $p=\infty$, we refer to Exercise 6.1.2 in \cite{Gla}.
\end{proof}

We then recall the following commutator and product estimates:
\begin{lemma}\label{A2}
Let $l\ge 0$ and define the commutator
\begin{align}\label{commutator}
\left[\na^l,g\right]h=\na^l(gh)-g\na^lh.
\end{align}
Then we have
\begin{align}\label{commutator estimate}
\norm{\left[\na^l,g\right]h}_{L^{p_0}} \ls\norm{\na g}_{L^{p_1}}
\norm{\na^{l-1}h}_{L^{p_2}} +\norm{\na^l g}_{L^{p_3}}\norm{h}_{L^{p_4}}.
\end{align}
In addition, we have that for $l\ge0$,
\begin{align}\label{product estimate}
\norm{\na^l(gh)}_{L^{p_0}} \ls\norm{g}_{L^{p_1}}
\norm{\na^{l}h}_{L^{p_2}} +\norm{\na^l g}_{L^{p_3}} \norm{ h}_{L^{p_4}}.
\end{align}
Here $p_0,p_2,p_3\in(1,\infty)$ and
\begin{align*}
\frac1{p_0}=\frac1{p_1}+\frac1{p_2}=\frac1{p_3}+\frac1{p_4}.
\end{align*}
\end{lemma}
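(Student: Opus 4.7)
The plan is to establish the product estimate \eqref{product estimate} first and then derive the commutator estimate \eqref{commutator estimate} as a consequence, following the standard Kato--Ponce/Moser-type argument via Littlewood--Paley decomposition (or, when $l$ is a nonnegative integer, directly via the Leibniz rule and Gagliardo--Nirenberg interpolation). Since the main body of the paper uses these estimates only for integer $l\ge 1$, I will first sketch the integer-$l$ proof, and then indicate the fractional modification.

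For integer $l\ge 0$, the Leibniz rule gives $\nabla^l(gh)=\sum_{k=0}^{l}\binom{l}{k}\nabla^k g\,\nabla^{l-k}h$. By Hölder's inequality, for each $k$ we can place $\nabla^k g$ in $L^{q_k}$ and $\nabla^{l-k}h$ in $L^{r_k}$ with $1/q_k+1/r_k=1/p_0$. The key interpolation step is: using Lemma \ref{A1}, for $1\le k\le l-1$ we estimate
\begin{align*}
\|\nabla^k g\|_{L^{q_k}}\|\nabla^{l-k}h\|_{L^{r_k}}\ls \|\nabla g\|_{L^{p_1}}^{1-\theta}\|\nabla^l g\|_{L^{p_3}}^{\theta}\|\nabla^{l-1}h\|_{L^{p_2}}^{1-\sigma}\|h\|_{L^{p_4}}^{\sigma},
\end{align*}
with $\theta,\sigma\in[0,1]$ matching the scaling $1/p_0=1/p_1+1/p_2=1/p_3+1/p_4$. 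The scaling conditions force $\theta+\sigma$ to be an integer-valued parameter along the endpoints ($k=0$ and $k=l$ respectively recover the two terms on the right), and Young's inequality $a^{1-\theta}b^{\theta}c^{1-\sigma}d^{\sigma}\ls (ac)+(bd)$ (applied after the interior interpolation) gives the bound by $\|\nabla g\|_{L^{p_1}}\|\nabla^{l-1}h\|_{L^{p_2}}+\|\nabla^l g\|_{L^{p_3}}\|h\|_{L^{p_4}}$, proving \eqref{product estimate}. The endpoint cases $k=0$ and $k=l$ are controlled directly by the two terms on the right-hand side (the $k=0$ term requires shifting one derivative: $\|g\|_{L^{p_1}}\|\nabla^l h\|_{L^{p_2}}$ is interpolated against $\|\nabla g\|_{L^{p_1}}\|\nabla^{l-1}h\|_{L^{p_2}}$ via Gagliardo--Nirenberg on $h$).

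The commutator estimate \eqref{commutator estimate} is an immediate corollary: since
\begin{align*}
[\nabla^l,g]h=\nabla^l(gh)-g\,\nabla^l h=\sum_{k=1}^{l}\binom{l}{k}\nabla^k g\,\nabla^{l-k}h,
\end{align*}
the $k=0$ term drops out, leaving only summands where at least one derivative has fallen on $g$. Hence the previous interpolation/Young step applied to $k=1,\ldots,l$ directly yields
\begin{align*}
\|[\nabla^l,g]h\|_{L^{p_0}}\ls \|\nabla g\|_{L^{p_1}}\|\nabla^{l-1}h\|_{L^{p_2}}+\|\nabla^l g\|_{L^{p_3}}\|h\|_{L^{p_4}},
\end{align*}
which is \eqref{commutator estimate}. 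The restriction $p_0,p_2,p_3\in(1,\infty)$ ensures the Calderón--Zygmund-type inequalities (equivalence of $L^p$ norms of $\nabla^l f$ and $\|(-\Delta)^{l/2}f\|_{L^p}$) are available when $l$ is not an integer.

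For non-integer $l$, the Leibniz rule is unavailable and one instead performs a Littlewood--Paley decomposition $gh=\sum_{j,k}\Delta_j g\,\Delta_k h$ and splits into paraproducts $T_gh+T_hg+R(g,h)$. The commutator $[\Delta_j,g]$ acting on $h$ gains one derivative by a standard mean-value argument on the convolution kernel of $\Delta_j$, which is precisely the mechanism that removes the ``top'' derivative on $h$ in \eqref{commutator estimate}. The remaining terms are controlled by Bernstein's inequality and the square-function characterization of $L^{p_0}$ (valid for $p_0\in(1,\infty)$), leading to the same bounds. I expect the main obstacle to be purely bookkeeping: ensuring that every combination $(p_1,p_2,p_3,p_4)$ satisfying the scaling relation admits the interpolation step with the right endpoint matching. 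This can be handled once and for all by checking the two boundary cases and invoking Riesz--Thorin (or equivalently Young's inequality in the exponent space).
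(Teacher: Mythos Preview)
The paper does not give a proof of this lemma; it simply cites Lemma~3.1 of \cite{J} (Ju). So there is no argument in the paper to compare against, and what you have written is already far more than what the authors provide.

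Your overall strategy---Leibniz rule plus H\"older plus Gagliardo--Nirenberg interpolation plus Young's inequality for integer $l$, and the Kato--Ponce paraproduct machinery for fractional $l$---is the standard and correct route. A few points in your write-up are garbled, however. First, in the displayed interpolation you use the \emph{commutator} endpoints $\nabla g,\nabla^l g,\nabla^{l-1}h,h$ but then claim this proves the \emph{product} estimate \eqref{product estimate}; the product estimate uses the endpoints $g,\nabla^l g,\nabla^l h,h$ instead. Second, the phrase ``$\theta+\sigma$ is an integer-valued parameter along the endpoints'' is not meaningful; what actually happens is that the scaling forces $\theta=\sigma=(k-1)/(l-1)$ (for the commutator) or $\theta=\sigma=k/l$ (for the product), and it is precisely this equality $\theta=\sigma$ that makes Young's inequality $a^{1-\theta}b^{\theta}c^{1-\theta}d^{\theta}\le ac+bd$ applicable. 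Third, your remark that the $k=0$ term in the product estimate ``requires shifting one derivative'' is unnecessary: $k=0$ gives $g\,\nabla^l h$, which is bounded directly by $\|g\|_{L^{p_1}}\|\nabla^l h\|_{L^{p_2}}$, the first term on the right. Finally, note that Lemma~\ref{A1} as stated only interpolates between two $L^2$ norms with target $L^p$, $p\ge 2$; for your argument you need the general Gagliardo--Nirenberg inequality with arbitrary Lebesgue exponents, so you should cite that version rather than Lemma~\ref{A1}.
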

\begin{proof}
We refer to Lemma 3.1 in \cite{J}.
\end{proof}

Lastly, we record the estimates of the remainder $\mathcal{R}$ defined by \eqref{hhhggg}.
\begin{lemma}\label{A3}
Let $\mathcal{R}$ be defined by \eqref{hhhggg}. Then we have that for $l\ge1$,
\begin{align}\label{A31}
\norm{\na^l\mathcal{R}}_{L^2}\ls\de\left(\norm{\na\varrho}_{L^2}+\norm{\na^l\varrho}_{L^2}\right)
\end{align}
and
\begin{align}\label{A32}
\norm{\na^l\mathcal{R}}_{L^2}\ls\de\left(\norm{\varrho}_{L^\infty}+\norm{\na^l\varrho}_{L^2}\right).
\end{align}
Here $``\ls"$ stands for $``\le C"$ with the constant $C$ depending on the function $h$, the upper and lower bounds of $\rho_s$ and $\norm{\nabla \rho_s}_{H^l}$.
\end{lemma}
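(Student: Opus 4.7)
My plan is first to exhibit $\mathcal{R}$ in a quadratic-in-$\varrho$ product form. Substituting $s=\rho_s+t\varrho$ with $t\in[0,1]$ in \eqref{hhhggg} gives
$$
\mathcal{R}=\varrho^{2}\,G(\varrho,\rho_s),\qquad G(\varrho,\rho_s):=\int_0^1(1-t)\,h''(\rho_s+t\varrho)\,dt.
$$
Because $h$ is smooth and, by \eqref{b1} together with the a priori bound $\norm{\varrho}_{L^\infty}\ls\de$, the argument $\rho_s+t\varrho$ stays in a compact interval bounded away from $0$, so $G$ and all its partial derivatives are uniformly bounded smooth functions of $(\varrho,\rho_s)$.

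Writing $\mathcal{R}=\varrho\cdot(\varrho G)$ and applying the product estimate \eqref{product estimate} twice with exponent choice $(p_0,p_1,p_2,p_3,p_4)=(2,\infty,2,2,\infty)$ yields
$$
\norm{\nabla^l\mathcal{R}}_{L^2}\ls\norm{\varrho}_{L^\infty}^2\,\norm{\nabla^l G}_{L^2}+\norm{\varrho}_{L^\infty}\,\norm{G}_{L^\infty}\,\norm{\nabla^l\varrho}_{L^2}+\norm{\varrho G}_{L^\infty}\,\norm{\nabla^l\varrho}_{L^2}.
$$
A standard Moser-type composition estimate, derived via Faà di Bruno together with Lemma \ref{A1}, then gives for every $l\ge 1$
$$
\norm{\nabla^l G(\varrho,\rho_s)}_{L^2}\ls\norm{\nabla^l\varrho}_{L^2}+\norm{\nabla^l\rho_s}_{L^2},
$$
with implicit constant depending only on $\norm{(\varrho,\rho_s)}_{L^\infty}$ and finitely many derivatives of $h$. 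Plugging this in and using $\norm{\nabla^l\rho_s}_{L^2}\ls 1$ from \eqref{b2} together with $\norm{\varrho}_{L^\infty}\ls\de$ produces
$$
\norm{\nabla^l\mathcal{R}}_{L^2}\ls\de\,\norm{\nabla^l\varrho}_{L^2}+\norm{\varrho}_{L^\infty}^2,
$$
which immediately delivers \eqref{A32} after bounding $\norm{\varrho}_{L^\infty}^2\ls\de\,\norm{\varrho}_{L^\infty}$.

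To derive the sharper estimate \eqref{A31}, I replace this last step by the Gagliardo–Nirenberg interpolation of Lemma \ref{A1}:
$$
\norm{\varrho}_{L^\infty}^2\ls\norm{\nabla\varrho}_{L^2}\,\norm{\nabla^2\varrho}_{L^2}\ls\de\,\norm{\nabla\varrho}_{L^2},
$$
valid in $\mathbb{R}^3$, which converts one copy of $\norm{\varrho}_{L^\infty}$ into $\norm{\nabla\varrho}_{L^2}$ while absorbing the other into $\de$. The main obstacle I anticipate is a clean proof of the Moser composition estimate for $\nabla^l G(\varrho,\rho_s)$: one must expand via Faà di Bruno and then systematically apply Lemma \ref{A1} to every intermediate-order derivative of $\varrho$ and $\rho_s$ in order to bound them by $\norm{\nabla^l\varrho}_{L^2}+\norm{\nabla^l\rho_s}_{L^2}$; once this is in place, the rest reduces to routine product and interpolation inequalities.
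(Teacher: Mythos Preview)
Your proof is correct and takes a genuinely different route from the paper's. You exploit the quadratic factorization $\mathcal{R}=\varrho^{2}G(\varrho,\rho_s)$ and then apply the product estimate \eqref{product estimate} together with a Moser-type composition bound on $G$; the paper instead derives the recursive identity
\[
\nabla\mathcal{R}(f)=f''(\rho_s)\,\varrho\,\nabla\varrho+\mathcal{R}(f')\,(\nabla\varrho+\nabla\rho_s),
\]
treats $l=1$ by hand, and for integer $l\ge 2$ reduces $\norm{\nabla^{l}\mathcal{R}(f)}_{L^{2}}$ to $\norm{\nabla^{l-1}\mathcal{R}(f')}_{L^{2}}$ via the product estimate, iterating down to the base case. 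The paper's argument is entirely self-contained (it never invokes a composition estimate for $G$), but is written for integer $l$ and relies on induction. Your approach is more conceptual and, once the fractional Leibniz rule and the two-variable Moser estimate are in place, handles non-integer $l\ge 1$ uniformly, which is actually needed later in Section~\ref{section4}. The step you flag as the main obstacle---the bound $\norm{\nabla^{l}G(\varrho,\rho_s)}_{L^{2}}\ls\norm{\nabla^{l}\varrho}_{L^{2}}+\norm{\nabla^{l}\rho_s}_{L^{2}}$---is indeed the one place requiring real work, but it is a standard consequence of Fa\`a di Bruno combined with the Gagliardo--Nirenberg interpolation of Lemma~\ref{A1}, so your outline is sound.
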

\begin{proof}
We only prove \eqref{A31}, while \eqref{A32} can be proved similarly with minor modifications.
We may view $\mathcal{R}$ as an operator over $h$, $i.e.$, we define the operator $\mathcal{R}(f)$ of the smooth function $f$:
\begin{align}\label{r1}
\mathcal{R}(f)&:=\int_{\rho_s}^{\varrho+\rho_s}f''(s)(\varrho+\rho_s-s)\,ds\nonumber\\
&\equiv\int_{0}^{\varrho}f''(\varrho+\rho_s-\tau)\tau\,d\tau.
\end{align}
Then $\mathcal{R}=\mathcal{R}(h)$. It is clear from the definition \eqref{r1} that
\begin{align}\label{r2}
\mathcal{R}(f)=O(\varrho^2).
\end{align}
Moreover, taking the spatial derivative of \eqref{r1} yields
\begin{align}\label{r3}
\na\mathcal{R}(f)&=f''(\rho_s)\varrho\na\varrho+\int_{0}^{\varrho}f'''(\varrho+\rho_s-\tau)\tau\,d\tau\left(\na\varrho+\na\rho_s\right)\nonumber\\
&\equiv f''(\rho_s)\varrho\na\varrho+\mathcal{R}(f')\left(\na\varrho+\na\rho_s\right).
\end{align}
Hence, by H\"older's and Sobolev's inequalities, we have
\begin{align}\label{r4}
\norm{\na\mathcal{R}(f)}_{L^2}&\ls\norm{f''(\rho_s)\varrho\na\varrho}_{L^2}+\norm{\mathcal{R}(f')\left(\na\varrho+\na\rho_s\right)}_{L^2}\nonumber\\
&\ls\norm{f''(\rho_s)}_{L^\infty}\norm{\varrho}_{L^\infty}\norm{\na\varrho}_{L^2}+\norm{\varrho}_{L^\infty}\norm{\varrho}_{L^6}\left(\norm{\na\varrho}_{L^3}
+\norm{\na\rho_s}_{L^3}\right)\nonumber\\
&\ls\norm{\varrho}_{L^\infty}\norm{\na\varrho}_{L^2}\ls \delta\norm{\na\varrho}_{L^2}.
\end{align}
Since $\mathcal{R}=\mathcal{R}(h)$, we deduce \eqref{A31} for $l=1$.

Now for $l\ge2$, by the identity \eqref{r3} and the product estimates \eqref{product estimate} of Lemma \ref{A2}, we obtain
\begin{align}\label{r5}
&\norm{\na^{l}\mathcal{R}(f)}_{L^2}=\norm{\na^{l-1}\left(\na\mathcal{R}(f)\right)}_{L^2}\nonumber\\
&\quad\ls\norm{\na^{l-1}\left(f''(\rho_s)\varrho\na\varrho\right)}_{L^2}+\norm{\na^{l-1}\left(\mathcal{R}(f')\left(\na\varrho+\na\rho_s\right)\right)}_{L^2}\nonumber\\
&\quad\ls\norm{\na^{l-1}(f''(\rho_s))}_{L^3}\norm{\varrho\na\varrho}_{L^6}+\norm{f''(\rho_s)}_{L^\infty}\norm{\na^{l-1}\left(\varrho\na\varrho\right)}_{L^2}
\nonumber\\
&\quad\quad+\norm{\mathcal{R}(f')}_{L^\infty}\norm{\na^{l}\varrho}_{L^2}+\norm{\mathcal{R}(f')}_{L^6}\norm{\na^{l}\rho_s}_{L^3}\nonumber\\
&\quad\quad+\left(\norm{\na\varrho}_{L^\infty}+\norm{\na\rho_s}_{L^\infty}\right)\norm{\na^{l-1}\mathcal{R}(f')}_{L^2}\nonumber\\
&\quad\ls\norm{\varrho}_{L^6}\norm{\na\varrho}_{L^\infty}+\norm{\varrho}_{L^\infty}\norm{\na^{l}\varrho}_{L^2}+\norm{\na^{l-1}\varrho}_{L^6}\norm{\na\varrho}_{L^3}\nonumber\\
&\quad\quad+\norm{\varrho}_{L^\infty}^2\norm{\na^{l}\varrho}_{L^2}+\norm{\varrho}_{L^\infty}\norm{\varrho}_{L^6}+\norm{\na^{l-1}\mathcal{R}(f')}_{L^2}
\nonumber\\
&\quad\ls\de\left(\norm{\na\varrho}_{L^2}+\norm{\na^{l}\varrho}_{L^2}\right)+\norm{\na^{l-1}\mathcal{R}(f')}_{L^2}.
\end{align}
By this recursive inequality \eqref{r5}, we obtain that for $l\ge2$,
\begin{align}\label{r6}
\norm{\na^l\mathcal{R}}_{L^2}\equiv\norm{\na^l\mathcal{R}(h)}_{L^2}&\ls\de\sum^{l}_{\ell=1}\norm{\na^\ell\varrho}_{L^2}+\norm{\nabla \mathcal{R}(h^{(l-1)})}_{L^2}
\nonumber\\
&\ls\de\left(\norm{\na\varrho}_{L^2}+\norm{\na^l\varrho}_{L^2}\right).
\end{align}
Here in the last inequality we have used the inequality \eqref{r4} for $f=h^{(l-1)}$. This proves \eqref{A31} for $l\ge 2$, and the proof of the lemma is completed.
\end{proof}

\section*{Acknowledgements}
The authors are deeply grateful to the referees for the valuable comments and suggestions.


\begin{thebibliography}{99}
\smallskip

\bibitem{DLUY}
{\sc R. J. Duan, H. X. Liu, S. Ukai and T. Yang},
{\em Optimal $L^p$-$L^q$ convergence rate for the compressible Navier-Stokes equations with potential force},
J. Differential Equations, 238 (2007), pp. 220--233.

\bibitem{DUYZ1}
{\sc R. J. Duan, S. Ukai, T. Yang and H. J. Zhao},
{\em Optimal convergence rate for the compressible Navier-Stokes equations with potential force},
Math. Models Methods Appl. Sci., 17 (2007), pp. 737--758.

\bibitem{FNP}
{\sc E. Feireisl,  A. Novotn\'{y} and H. Petleltov\'{a}},
{\em On the existence of globally defined weak solutions to the Navier-Stokes equations of isentropic compressible fluids},
J. Math. Fluid Mech., 3 (2001), pp. 358--392.

\bibitem{Gla}
{\sc L. Grafakos},
{\em Classical and Modern Fourier Analysis}, Pearson
Education, Inc., Prentice Hall, 2004.

\bibitem{GW}
{\sc Y. Guo, Y. J. Wang},
{\em Decay of dissipative equations and negative Sobolev spaces},
Comm. Partial Differential Equations, 37 (2012), pp. 2165--2208.

\bibitem{HJW}
{\sc L. Hsiao, Q. C. Ju and S. Wang},
{\em The asymptotic behavior of global smooth solutions to the multi-dimensional hydrodynamic model for semiconductors},
Math. Meth. Appl. Sci., 26 (2003), pp. 1187--1210.


\bibitem{J}
{\sc N. Ju},
{\em Existence and uniqueness of the solution to the dissipative $2D$ Quasi-Geostrophic equations in the Sobolev space},
Commun. Math. Phys., 251 (2004), pp. 365--376.

\bibitem{KS2008}
{\sc T. Kobayashi, T. Suzuki},
{\em Weak solutions to the Navier-Stokes-Poisson equation},
Adv. Math. Sci. Appl., 18 (2008), pp. 141--168.

\bibitem{L}
{\sc P. L. Lions},
{\em Mathematical Topics in Fluids Mechanics}, Oxford Lecture Ser. Math. Appl., vol. 2,
Clarendon Press, Oxford University Press, New York, 1998.

\bibitem{LMZ}
{\sc H. L. Li, A. Matsumura and G. J. Zhang},
{\em Optimal decay rate of the compressible Navier-Stokes-Poisson system in $\mathbb{R}^3$},
Arch. Rational Mech. Anal., 196 (2010), pp. 681--713.


\bibitem{MN1}
{\sc A. Matsumura, T. Nishida},
{\em The initial value problems for the equations of motion of viscous and heat-conductive gases},
J. Math. Kyoto Univ., 20 (1980), pp. 67--104.

\bibitem{MN2}
{\sc A. Matsumura, T. Nishida},
{\em The initial value problem for the equations of motion of compressible viscous and heat-conductive fluids},
Proc. Japan Acad. Ser. A, 55 (1979), pp. 337--342.


\bibitem{P}
{\sc G. Ponce},
{\em Global existence of small solution to a class of nonlinear evolution equations},
Nonlinear Anal., 9 (1985), pp. 339--418.


\bibitem{S}
{\sc E. M. Stein},
{\em Singular Integrals and Differentiability Properties of Functions},
Princeton University Press, 1970.


\bibitem{TW}
{\sc Z. Tan, Y. J. Wang},
{\em Global existence and large-time behavior of weak solutions to the compressible magnetohydrodynamic equations with Coulomb force},
Nonlinear Anal., 71 (2009), pp. 5866--5884.


\bibitem{W}
{\sc Y. J. Wang},
{\em Decay of the Navier-Stokes-Poisson equations},
J. Differential Equations, 253 (2012), pp. 273--297.


\end{thebibliography}
\end{document}